\selectfont\symbol{60}\fontencoding{\encodingdefault}}
\newcommand{\assign}{:=}
\newcommand{\backassign}{=:}
\newcommand{\mathD}{\mathrm{D}}
\newcommand{\mathd}{\mathrm{d}}
\newcommand{\nobracket}{}
\newcommand{\nosymbol}{}
\newcommand{\of}{:}
\newcommand{\rightarrowlim}{\mathop{\rightarrow}\limits}
\newcommand{\tmaffiliation}[1]{\\ #1}
\newcommand{\tmcolor}[2]{{\color{#1}{#2}}}
\newcommand{\tmem}[1]{{\em #1\/}}
\newcommand{\tmemail}[1]{\\ \textit{Email:} \texttt{#1}}
\newcommand{\tmop}[1]{\ensuremath{\operatorname{#1}}}
\newcommand{\tmtextbf}[1]{{\bfseries{#1}}}
\newcommand{\tmtextit}[1]{{\itshape{#1}}}
\newenvironment{enumerateroman}{\begin{enumerate}[i.] }{\end{enumerate}}
\newenvironment{proof}{\noindent\textbf{Proof\ }}{\hspace*{\fill}$\Box$\medskip}
\newenvironment{proof*}[1]{\noindent\textbf{#1\ }}{\hspace*{\fill}$\Box$\medskip}
\newtheorem{corollary}{Corollary}
\newtheorem{definition}{Definition}
\newtheorem{lemma}{Lemma}
\newtheorem{proposition}{Proposition}
{\theorembodyfont{\rmfamily}\newtheorem{remark}{Remark}}
\newtheorem{theorem}{Theorem}
\DeclareSymbolFont{tipa}{T3}{cmr}{m}{n}
\DeclareMathAccent{\invbreve}{\mathalpha}{tipa}{16}
\begin{document}

\title{Hyperviscous stochastic Navier--Stokes equations with white noise
invariant measure}

\author{
  M.~Gubinelli
  \tmaffiliation{Institute for Applied Mathematics \& Hausdorff Center for
  Mathematics\\
  University of Bonn, Germany}
  \tmemail{gubinelli@iam.uni-bonn.de}
  \and
  M.~Turra
  \tmaffiliation{Institute for Applied Mathematics \& Hausdorff Center for
  Mathematics\\
  University of Bonn, Germany}
  \tmemail{mattia.turra@iam.uni-bonn.de}
}

\maketitle

\begin{abstract}
  We prove existence and uniqueness of martingale solutions to a (slightly)
  hyper-viscous stochastic Navier--Stokes equation in 2d with initial
  conditions absolutely continuous with respect to the Gibbs measure
  associated to the energy, getting the results both in the torus and in the
  whole space setting.
\end{abstract}

\section{Introduction}

Consider the following stochastic hyper-viscous Navier--Stokes equation on
$\mathbb{R}_+ \times \mathbb{T}^2$
\begin{equation}
  \begin{array}{lll}
    \partial_t u & = & - A^{\theta} u - u \nosymbol \cdot \nabla u - \nabla p
    - \sqrt{2} \nabla^{\perp} A^{(\theta - 1) / 2} \xi,\\
    \tmop{div} u & = & 0,
  \end{array} \label{eq:navierstokes}
\end{equation}
where $\mathbb{T}^2$ is the two dimensional torus, $A = - \Delta$ on
$\mathbb{T}^2$, $\nabla^{\perp} \assign (\partial_2, - \partial_1)$, $\theta >
1$, and $\xi$ denotes a space-time white noise. The initial condition for $u$
will be taken distributed according to the white noise on $\mathbb{T}^2$ or an
absolute continuous perturbation thereof with density in $L^2$. The white
noise on $\mathbb{T}^2$ is formally invariant for the dynamics described
by~{\eqref{eq:navierstokes}} and the existence theory for the corresponding
stationary process has been addressed by Gubinelli and Jara
in~{\cite{gubinelli2013regularization}} using the concept of \tmtextit{energy
solutions} for any $\theta > 1$. Uniqueness was left open in the
aforementioned paper, and the main aim of the present work, which can be
thought of as a continuation of~{\cite{gubinelli2018infinitesimal}}, is to
introduce a martingale problem formulation~{\eqref{eq:navierstokes}} for which
we can prove uniqueness.

\

In order to properly formulate the martingale problem, we need to investigate
the infinitesimal generator for eq.~{\eqref{eq:navierstokes}} and uniqueness
will result from suitable solutions of the associated Kolmogorov backward
equation.

\

The variable $u$ appearing in eq.~{\eqref{eq:navierstokes}} represents
physically the {\tmem{velocity}} of a fluid. Rewriting the equation for the
\tmtextit{vorticity} $\omega \assign \nabla^{\perp} \cdot u$ yields
\begin{equation}
  \partial_t \omega = - A^{\theta} \omega - u \cdot \nabla \omega + \sqrt{2}
  A^{(\theta + 1) / 2} \xi . \label{eq:ns-vort}
\end{equation}
We also have the relation $u = K \ast \omega$, for the {\tmem{Biot-Savart
kernel}} $K$ on $\mathbb{T}^2$ given by
\[ K (x) = \frac{1}{2 \pi \iota}  \sum_{k \in \mathbb{Z}_0^2}
   \frac{k^{\perp}}{| k |^2 \nobracket \nobracket} e^{2 \pi \iota k \cdot x} =
   - \sum_{k \in \mathbb{Z}_0^2} \frac{2 \pi \iota k^{\perp}}{| 2 \pi k |^2
   \nobracket \nobracket} e^{2 \pi \iota k \cdot x}, \]
where $k^{\perp} = (k_2, - k_1)$ and $\mathbb{Z}_0^2 =\mathbb{Z}^2 \backslash
\{ 0 \}$. It is more convenient to work with the scalar quantity $\omega$ and
with eq.~{\eqref{eq:ns-vort}}.

\

The standard stochastic Navier-Stokes equation corresponds to the case
$\theta = 1$. However, this regime is quite singular for the white noise
initial condition and no results are known, not even existence of a stationary
solution, e.g.~from limit of Galerkin approximations. While a bit unphysical,
we will stick here to the \tmtextit{hyper-viscous regime}, namely $\theta >
1$. Note that the noise has to be coloured accordingly in order to preserve
the white noise as invariant measure. Moreover, we call \tmtextit{energy
measure} the law under which the velocity field is a (vector-valued,
incompressible) white noise. In terms of vorticity $\omega$, the kinetic
energy of the fluid configuration $u$ is
\[ \| u \|_{L^2}^2 = \int_{\mathbb{T}^2} | K \ast \omega |^2 (x) \mathd x =
   \sum_{k \in \mathbb{Z}_0^2} | \hat{K} (k) |^2 | \hat{\omega} (k) |^2 =
   \sum_{k \in \mathbb{Z}_0^2} \left| \frac{2 \pi \iota k^{\perp}}{| 2 \pi k
   |^2} \right|^2 | \hat{\omega} (k) |^2 = \| (- \Delta)^{- 1 / 2} \omega
   \|_{L^2}^2, \]
where $\hat{f} : \mathbb{Z}^2 \rightarrow \mathbb{C}$ denotes the Fourier
transform of $f : \mathbb{T}^2 \rightarrow \mathbb{R}$ defined as to have $f
(x) = \sum_{k \in \mathbb{Z}^2} e^{2 \pi \iota k \cdot x} \hat{f} (k)$. The
energy measure is thus formally given~by
\begin{equation}
  \mu (\mathd \omega) = \frac{1}{C} e^{- \frac{1}{2}  \| A^{- 1 / 2} \omega
  \|^2_{L^2}} \mathd \omega, \label{eq:energy-measure}
\end{equation}
where $\mathd \omega$ denotes the ``Lebesgue measure'' on functions
on~$\mathbb{T}^2$. Rigorously, this of course means the product Gaussian
measure
\[ \mu (\mathd \omega) = \prod_{k \in \mathbb{Z}_0^2} \frac{1}{C_k} \exp
   \left( - \frac{| \hat{\omega} (k) |^2}{2 | 2 \pi k |^2} \right) \mathd
   \hat{\omega} (k), \]
with the restriction that $\hat{\omega} (- k) = \overline{\hat{\omega} (k)}$.
For $f, g \in C^{\infty} (\mathbb{T}^2)$, we have
\[ \int \omega (f) \omega (g) \mu (\mathd \omega) = \sum_{k \in
   \mathbb{Z}_0^2} | 2 \pi k |^2  \overline{\hat{f} (k)}  \hat{g} (k) =
   \langle A^{1 / 2} f, A^{1 / 2} g \rangle_{L^2 (\mathbb{T}^2)} = \langle f,
   g \rangle_{H^1 (\mathbb{T}^2)} . \]
We can use the right-hand side as the definition of the covariance of $(\omega
(f))_{f \in C^{\infty} (\mathbb{T}^2)}$, which determines the law of $\omega$
as a centred Gaussian process indexed by $H^1 (\mathbb{T}^2)$. If $\eta$ is a
white noise on $L^2 (\mathbb{T}^2)$, then $\mu$ has the same distribution as
$A^{1 / 2} \eta$ and it is only supported on $H^{- 2 -} (\mathbb{T}^2)$.

\

A different situation occurs if we consider initial conditions distributed
according to the \tmtextit{enstrophy measure}, namely the Gaussian measure for
which the initial vorticity is a white noise. This measure is more regular
than the energy measure and more results are known, both for the Euler
dynamics (i.e.,~without dissipation and noise) and for the stochastic
Navier-Stokes dynamics, see
e.g.~{\cite{albeverio1990global,albeverio2004uniqueness,albeverio2008some}}.

\

As we already remarked, we use here the technique introduced
in~{\cite{gubinelli2018infinitesimal}} and strongly rooted in the notion of
energy solution of Gon{\c c}alves and Jara~{\cite{Goncalves2014}}, extended
in~{\cite{gubinelli2013regularization}}. With respect
to~{\cite{gubinelli2018infinitesimal}} we give a slightly different
formulation which simplifies certain technical estimates. The core of the
argument however remains the same. The main point is to consider the
well-posedness problem for~{\eqref{eq:navierstokes}} as a problem of
{\tmem{singular diffusion}}, i.e. diffusions with distributional drift. The
papers~{\cite{Flandoli2003,Flandoli2004,Delarue2016,Cannizzaro2018}} all
follow a similar strategy in order to identify a domain for the formal
infinitesimal generator $\mathcal{L}= \tfrac{1}{2} \Delta + b \cdot \nabla$ of
a finite dimensional diffusion. Then they show existence and uniqueness of
solutions for the corresponding martingale problem. The key difficulty is that
for distributional $b$ the domain does not contain any smooth functions and
instead one has to identify a class of non-smooth test functions with a
special structure, adapted to $b$. Roughly speaking they must be local
perturbations of a linear functional constructed from $b$. Recently other
results of regularisation by noise for SPDEs~{\cite{DaPrato2013,DaPrato2016}}
have been obtained. An important difference is that our drift is unbounded and
not even a function. The connection between energy solutions and
regularisation by noise was first observed
in~{\cite{gubinelli2013regularization}}.

\paragraph*{Plan of the paper}In Section~\ref{s:galerkin} we introduce a
Galerkin approximation for the nonlinearity $u \cdot \nabla \omega$ and study
the infinitesimal generator of the approximating equation. The martingale
problem for cylinder function related to eq.~{\eqref{eq:ns-vort}} is
introduced in Section~\ref{sec:cyl-mart}. In Section~\ref{sec:uniq} we prove
uniqueness for the martingale problem via existence of classical solutions to
the backward Kolmogorov equation for the operator $\mathcal{L}$ involved in
the martingale problem. The construction of a domain to such an operator is
the core of the work and it will be tackled in Section~\ref{s:kolmogorov},
where we provide also existence and uniqueness for the associated Kolmogorov
equation. In Section~\ref{sec:bounds} we show some crucial bounds on the
drift. Finally, Section~\ref{s:whole-space} extends the results obtained in
the previous part of the paper to the whole space case, that is, to the
hyper-viscous stochastic Navier--Stokes equation on~$\mathbb{R}^2$.
Appendix~\ref{s:app-A} contains some auxiliary results.

\paragraph{Notation}Let us fix here some notation that will be adopted
throughout the paper. The Schwartz space on $\mathbb{T}^2$ is denoted by
$\mathcal{S} (\mathbb{T}^2)$ and its dual $\mathcal{S}' (\mathbb{T}^2)$ is the
space of tempered distributions. We denote by $H^s (\mathbb{T}^2)$, $s \in
\mathbb{R}$, the completion of the space of functions $f \in \mathcal{S}
(\mathbb{T}^2)$ such that
\[ \| f \|_{H^s (\mathbb{T}^2)}^2 \assign \int_{\mathbb{T}^2} | z |^{2 s} |
   \hat{f} (z) |^s \mathd z < + \infty, \]
identifying $f$ and $g$ whenever $\| f - g \|_{H^s (\mathbb{T}^2)} = 0$. From
now on, we write $C (X, Y)$ to indicate the space of continuous functions from
$X$ to~$Y$. We also write $a \lesssim b$ or $b \gtrsim a$ if there exists a
constant $C > 0$, independent of the variables under consideration, such that
$a \leqslant C \cdot b$, and $a \simeq b$ if both $a \lesssim b$ and $b
\lesssim a$. If the aforementioned constant $C$ depends on a variable, say $C
= C (x)$, then we use the notation $a \lesssim_x b$, and similarly
for~$\gtrsim$. For the sake of brevity, we will also use the notation $k_{1 :
n} = (k_1, \ldots, k_n)$.

\section{Galerkin approximations}\label{s:galerkin}

In order to rigorously study the eq.~{\eqref{eq:ns-vort}}, consider the
solution $(\omega^m_t)_{t \geqslant 0}$ to its Galerkin approximation:
\begin{equation}
  \partial_t \omega^m = - A^{\theta} \omega^m - B_m (\omega^m) + \sqrt{2}
  A^{(1 + \theta) / 2} \xi, \label{eq:ns-vort-appr}
\end{equation}
where
\[ B_m (\omega) \assign \tmop{div} \Pi_m ((K \ast \Pi_m \omega) \Pi_m \omega),
\]
and $\Pi_m$ denotes the projection onto Fourier modes of size less than $m$,
namely $\Pi_m f (x) = \sum_{| k | \leqslant m} e^{2 \pi \iota k \cdot x}
\hat{f} (k)$.

\begin{proposition}
  Eq.~{\eqref{eq:ns-vort-appr}} has a unique strong solution $\omega^m \in C
  (\mathbb{R}_+, H^{- 2 -} (\mathbb{T}^2))$ for every deterministic initial
  condition in $H^{- 2 -} (\mathbb{T}^2)$. The solution is a strong Markov
  process and it is invariant under~$\mu$.
\end{proposition}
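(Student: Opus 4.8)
The plan is to treat \eqref{eq:ns-vort-appr} as a finite-dimensional SDE for the low Fourier modes coupled with an autonomous linear (Ornstein--Uhlenbeck) evolution for the high modes, and then assemble the pieces. First I would split the vorticity as $\omega^m = \Pi_m \omega^m + (1-\Pi_m)\omega^m$. The nonlinearity $B_m(\omega^m) = \operatorname{div}\Pi_m((K\ast\Pi_m\omega^m)\Pi_m\omega^m)$ depends only on the finitely many modes $\hat\omega^m(k)$ with $|k|\le m$, and the output lives in the same finite-dimensional space $H_m := \Pi_m L^2(\mathbb{T}^2)$. So the equation for $\Pi_m\omega^m$ is a genuine SDE on the finite-dimensional space $H_m$ (with the reality constraint $\hat\omega(-k)=\overline{\hat\omega(k)}$) of the form $\mathrm{d}\omega^m = -A^\theta\omega^m\,\mathrm{d}t - B_m(\omega^m)\,\mathrm{d}t + \sqrt2\,A^{(1+\theta)/2}\,\mathrm{d}W$, where the noise is nondegenerate on each mode. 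The complementary part $(1-\Pi_m)\omega^m$ solves the linear SPDE $\partial_t v = -A^\theta v + \sqrt2 A^{(1+\theta)/2}(1-\Pi_m)\xi$, which is a standard infinite-dimensional Ornstein--Uhlenbeck process with explicit mild solution $v_t = e^{-tA^\theta}(1-\Pi_m)\omega_0 + \sqrt2\int_0^t e^{-(t-s)A^\theta}A^{(1+\theta)/2}(1-\Pi_m)\,\mathrm{d}W_s$; one checks by the usual variance computation (the exponent $2\cdot(1+\theta)/2 - 2\theta = 1-\theta < 0$ over the relevant modes, together with polynomial counting of lattice points) that this process takes values in $C(\mathbb{R}_+, H^{-2-}(\mathbb{T}^2))$ and has the high-mode marginal of $\mu$ as its stationary law.

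For the finite-dimensional piece I would argue as follows. The drift $b_m(\omega) := -A^\theta\omega - B_m(\omega)$ is smooth (polynomial) on $H_m$, hence locally Lipschitz, so there is a unique local strong solution. Global existence comes from an a priori estimate: applying Itô's formula to $\|A^{-1/2}\Pi_m\omega^m_t\|_{L^2}^2$ (the energy, which is the quadratic form appearing in $\mu$), the crucial point is that the nonlinear term drops out. Indeed $\langle A^{-1}\Pi_m\omega, B_m(\omega)\rangle = \langle A^{-1}\Pi_m\omega, \operatorname{div}\Pi_m((K\ast\Pi_m\omega)\Pi_m\omega)\rangle$; writing $u^m = K\ast\Pi_m\omega = A^{-1}\nabla^\perp\Pi_m\omega$ this becomes $-\langle u^m, (u^m\cdot\nabla)\Pi_m\omega\rangle$, which vanishes because $u^m$ is divergence-free and the transport term conserves $\int |u^m|^2$ (the projection $\Pi_m$ is harmless here since all fields involved are already band-limited). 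What remains is the linear dissipation $-\|\Pi_m\omega^m_t\|_{...}^2$-type term plus a bounded-variation contribution from the Itô correction, giving a linear differential inequality for $\mathbb{E}\|A^{-1/2}\Pi_m\omega^m_t\|^2$ and hence no blow-up; combined with local existence this yields the unique global strong solution in $C(\mathbb{R}_+, H_m)$. Adding back the OU part gives the asserted solution in $C(\mathbb{R}_+, H^{-2-}(\mathbb{T}^2))$; strong uniqueness for the SDE plus uniqueness of the mild OU solution give uniqueness for the full system, and the strong Markov property follows from strong uniqueness in the standard way (the solution is a measurable functional of the initial datum and the driving noise).

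Invariance under $\mu$ is the step I expect to require the most care. The finite-dimensional marginal $\mu_m$ of $\mu$ on $H_m$ is the centred Gaussian with covariance $A|_{H_m}$ (i.e.\ density proportional to $e^{-\frac12\|A^{-1/2}\omega\|_{L^2}^2}$ in the Fourier coordinates, up to the reality constraint). I would check that $\mu_m$ is infinitesimally invariant for the generator $\mathcal{L}_m = \operatorname{tr}(A^{1+\theta}D^2) + b_m\cdot\nabla$ by verifying $\int \mathcal{L}_m\varphi\,\mathrm{d}\mu_m = 0$ for cylinder test functions $\varphi$. This splits into two parts: the Ornstein--Uhlenbeck part $\operatorname{tr}(A^{1+\theta}D^2) - \langle A^\theta\omega,\nabla\cdot\rangle$ is symmetric in $L^2(\mu_m)$ because the covariance $A$ and the drift coefficient $A^\theta$ satisfy the detailed-balance relation $A^{1+\theta} = A^\theta\cdot A$ — this is precisely why the noise was coloured by $A^{(1+\theta)/2}$ — so it annihilates $\mu_m$; and the nonlinear part $-B_m(\omega)\cdot\nabla$ must be shown to be \emph{skew-symmetric} (Liouville-type), i.e.\ $\int B_m(\omega)\cdot\nabla\varphi\,\mathrm{d}\mu_m = 0$, which amounts to $\operatorname{div}_{\mu_m} B_m = 0$, meaning $\sum_k \partial_{\hat\omega(k)}\bigl(\widehat{B_m(\omega)}(k)\,e^{-\frac12\|A^{-1/2}\omega\|^2}\bigr)=0$. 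This requires (a) that $B_m$ is divergence-free as a vector field on $H_m$ in the flat sense, $\sum_k \partial_{\hat\omega(k)}\widehat{B_m(\omega)}(k) = 0$ — a consequence of the incompressibility/antisymmetry structure of the Euler nonlinearity, i.e.\ the bilinear form $(\omega,\omega')\mapsto \operatorname{div}\Pi_m((K\ast\Pi_m\omega)\Pi_m\omega')$ has vanishing trace — and (b) that $B_m$ is orthogonal to the gradient of the energy, $\langle A^{-1}\Pi_m\omega, B_m(\omega)\rangle = 0$, which is exactly the energy-conservation identity established above. Granting infinitesimal invariance, invariance of $\mu_m$ under the actual semigroup follows because $\mu_m$ has a smooth strictly positive density and the coefficients are smooth with the needed growth control (uniqueness of the invariant measure is not needed, only that $\mu_m$ is preserved), and then the product $\mu = \mu_m \otimes (\text{high-mode Gaussian})$ is invariant for the decoupled system. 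I would cite the relevant earlier results of the paper or the classical references for the passage from infinitesimal to genuine invariance; the only genuinely new computations are the two algebraic identities (a) and (b), and (b) is already in hand from the global-existence argument.
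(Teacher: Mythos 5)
Your proposal is correct and follows essentially the same route as the paper: the same splitting $\omega^m=\Pi_m\omega^m+(1-\Pi_m)\omega^m$ into a locally Lipschitz finite-dimensional SDE and an infinite-dimensional linear (Ornstein--Uhlenbeck) equation, with global existence and invariance of $\mu$ obtained from the conservation of $\|A^{-1/2}\Pi_m\omega\|_{L^2}^2$ by $B_m$ together with the Liouville property $\sum_k\partial_{\hat\omega(k)}\widehat{B_m(\omega)}(k)=0$. The paper simply outsources these computations to Section~7 of~\cite{gubinelli2013regularization}, and the two algebraic identities you isolate are exactly the ones carried out there.
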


\begin{proof}
  We can rewrite $\omega^m$ in Fourier variables as $\omega^m =
  w_{\mathrm{\tmop{fin}}}^m + w_{\mathrm{\tmop{lin}}}^m \assign \Pi_m \omega^m
  + (1 - \Pi_m) \omega^m$, in such a way that $w_{\mathrm{\tmop{fin}}}^m$ and
  $w_{\mathrm{\tmop{lin}}}^m$ solve a finite-dimensional SDE with locally
  Lipschitz continuous coefficients and an infinite-dimensional linear SDE,
  respectively. Global existence and invariance of $\mu$ follow by Section~7
  in~{\cite{gubinelli2013regularization}}. Now, $w_{\mathrm{\tmop{fin}}}^m$
  has compact spectral support and therefore $w_{\mathrm{\tmop{fin}}}^m \in C
  (\mathbb{R}_+, C^{\infty} (\mathbb{T}))$, while it can be proved that
  $w_{\mathrm{\tmop{lin}}}^m$ has trajectories in $C (\mathbb{R}_+, H^{- 2 -}
  (\mathbb{T}^2))$. Thus, $\omega^m$ has trajectories in $C (\mathbb{R}_+,
  H^{- 2 -} (\mathbb{T}^2))$.
\end{proof}

We define the semigroup of $\omega^m$ for all bounded and measurable functions
$\varphi$ as $T^m_t \varphi (\omega_0) \assign \mathbb{E}_{\omega_0} [\varphi
(\omega_t^m)]$, where, under $\mathbb{P}_{\omega_0}$, the process $\omega^m$
solves {\eqref{eq:ns-vort-appr}} with initial condition~$\omega_0 \in H^{- 2
-} (\mathbb{T}^2)$.

\begin{lemma}
  \label{lem:semigroup}For all $p \in [1, \infty]$, the family of operators
  $(T^m_t)_{t \geqslant 0}$ can be uniquely extended to a contraction
  semigroup on $L^p (\mu)$ which is continuous for $p \in [1, \infty [$.
\end{lemma}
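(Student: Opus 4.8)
The plan is to exploit the invariance of $\mu$ together with the Markov property of $\omega^m$. Since $\omega^m$ is a strong Markov process, $(T^m_t)_{t\geqslant 0}$ obeys the Chapman--Kolmogorov relation $T^m_{t+s}=T^m_t T^m_s$ on bounded measurable $\varphi$, and $T^m_t$ is positivity preserving with $T^m_t 1 = 1$; in particular it is a contraction on bounded measurable functions with the sup norm. For fixed $\omega_0$ the map $\varphi\mapsto T^m_t\varphi(\omega_0)=\mathbb{E}_{\omega_0}[\varphi(\omega^m_t)]$ is integration against a probability measure (the transition kernel), so Jensen's inequality gives $|T^m_t\varphi|^p\leqslant T^m_t(|\varphi|^p)$ pointwise for every $p\in[1,\infty[$. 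Integrating against $\mu$ and using invariance, $\int T^m_t(|\varphi|^p)\,\mathd\mu=\int|\varphi|^p\,\mathd\mu$, whence $\|T^m_t\varphi\|_{L^p(\mu)}\leqslant\|\varphi\|_{L^p(\mu)}$; the case $p=\infty$ is immediate. Taking $p=1$ and applying the estimate to $\varphi-\psi$ shows that $\varphi=\psi$ $\mu$-a.e. implies $T^m_t\varphi=T^m_t\psi$ $\mu$-a.e., so $T^m_t$ induces a well-defined linear contraction on the subspace of $L^p(\mu)$ given by the classes of bounded measurable functions.

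Since $\mu$ is a probability measure, bounded measurable functions are dense in $L^p(\mu)$ for $p\in[1,\infty[$, so each $T^m_t$ extends uniquely to a contraction on $L^p(\mu)$ (for $p=\infty$ one keeps the action already defined on $L^\infty(\mu)$), and any contractive extension must agree with this one on the dense set, giving uniqueness. The identities $T^m_0=\mathrm{Id}$ and $T^m_{t+s}=T^m_tT^m_s$ pass to the closure by continuity, so $(T^m_t)_{t\geqslant 0}$ is a contraction semigroup on each $L^p(\mu)$.

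It remains to establish strong continuity for $p\in[1,\infty[$, and by the contraction and semigroup properties it suffices to prove right-continuity at $t=0$. Here I would invoke the pathwise regularity from the previous proposition: for $\varphi$ bounded and continuous on $H^{-2-}(\mathbb{T}^2)$ and $\omega_0$ fixed, the continuity of $t\mapsto\omega^m_t$ in $H^{-2-}(\mathbb{T}^2)$ and dominated convergence give $T^m_t\varphi(\omega_0)=\mathbb{E}_{\omega_0}[\varphi(\omega^m_t)]\to\varphi(\omega_0)$ as $t\downarrow 0$; since $|T^m_t\varphi-\varphi|\leqslant 2\|\varphi\|_{\infty}\in L^p(\mu)$, a further dominated convergence yields $\|T^m_t\varphi-\varphi\|_{L^p(\mu)}\to 0$. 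For general $f\in L^p(\mu)$, choosing such a $\varphi$ with $\|f-\varphi\|_{L^p(\mu)}$ small and using $\|T^m_tf-f\|_{L^p(\mu)}\leqslant 2\|f-\varphi\|_{L^p(\mu)}+\|T^m_t\varphi-\varphi\|_{L^p(\mu)}$ closes the argument. The main obstacle is precisely this strong-continuity step: it rests on the pathwise continuity of $\omega^m$ in a fixed Polish state space, which is why the preceding proposition was careful to place trajectories in $C(\mathbb{R}_+,H^{-2-}(\mathbb{T}^2))$; note also that $p=\infty$ is genuinely excluded, since $L^\infty$-continuity fails for such semigroups in general.
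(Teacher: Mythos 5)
Your proof is correct. Note that the paper states Lemma~\ref{lem:semigroup} without proof, treating it as a standard consequence of the preceding proposition; your argument --- Jensen's inequality plus invariance of $\mu$ for the $L^p$-contraction property, density of bounded (continuous) functions for the unique extension, and pathwise continuity of $\omega^m$ in $H^{-2-}(\mathbb{T}^2)$ combined with dominated convergence for strong continuity when $p<\infty$ --- is exactly the standard argument being invoked. The only point you gloss over is the case $p=\infty$, where passing from the pointwise bound to an $L^\infty(\mu)$-bound on equivalence classes again uses invariance (a $\mu$-null set is not charged by the time-$t$ law started from $\mu$-a.e.\ initial condition), but this is indeed immediate.
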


\begin{definition}
  \label{def:cylfn}Let $\mathcal{C}= \tmop{Cyl}_{\mathbb{T}^2}$ denote the set
  of cylinder functions on $H^{- 2 -} (\mathbb{T}^2)$, namely those functions
  $\varphi : H^{- 2 -} (\mathbb{T}^2) \rightarrow \mathbb{R}$ of the form
  $\varphi (\omega) = \Phi (\omega (f_1), \ldots, \omega (f_n))$ for some $n
  \geqslant 1$ where $\Phi : \mathbb{R}^n \rightarrow \mathbb{R}$ is smooth
  and $f_1, \ldots, f_n \in C^{\infty} (\mathbb{T}^2)$, here and in the rest
  of the paper we will denote by $\nu (g)$ the duality between a distribution
  $\nu$ and a function~$g$.
\end{definition}

On such cylinder functions the generator of the semigroup $T^m$ has an
explicit representation: It{\^o}'s formula gives, for $\varphi \in
\tmop{Cyl}_{\mathbb{T}^2}$ as in Definition~\ref{def:cylfn},
\begin{equation}
  \mathd \varphi (\omega_t^m) =\mathcal{L}^m \varphi (\omega^m_t) \mathd t +
  \sum_{i = 1}^n \partial_i \Phi (\omega_t^m (f_1), \ldots, \omega_t^m (f_n))
  \mathd M_t (f_i), \label{eq:ito-cyl}
\end{equation}
where $\mathcal{L}^m \assign \mathcal{L}_{\theta} +\mathcal{G}^m$ with
\[ \mathcal{L}_{\theta} \varphi (\omega) = \sum_{i = 1}^n \partial_i \Phi
   (\omega (f_1), \ldots, \omega (f_n)) \omega (- A^{\theta} f_i) +
   \frac{1}{2} \sum_{i = 1}^n \partial_{i, j}^2 \Phi (\omega (f_1), \ldots,
   \omega (f_n)) \langle A^{\theta + 1} f_i, f_j \rangle, \]
and
\[ \mathcal{G}^m \varphi (\omega) = - \sum_{i = 1}^n \partial_i \Phi (\omega
   (f_1), \ldots, \omega (f_n)) \langle B_m (\omega), f_i \rangle . \]
Here, $(M_t (f_i))_{t \geqslant 0}$ is a continuous martingale with quadratic
variation
\[ \langle M (f_i) \rangle_t = 2 t \| A^{(\theta + 1) / 2} f_i \|_{L^2
   (\mathbb{T}^2)}^2, \]
and therefore $\int_0^{\cdot} \sum_{i = 1}^n \partial_i \Phi (\omega_t^m
(f_1), \ldots, \omega_t^m (f_n)) \mathd M_t (f_i)$ is a martingale.
Consequently, we have
\[ T^m_t \varphi (\omega) - \varphi (\omega) = \int_0^t T^m_s (\mathcal{L}^m
   \varphi) (\omega) \mathd s, \qquad \text{for all } \omega \in H^{- 2 -} .
\]
To extend this to more general functions $\varphi$, we work via Fock space
techniques. The Hilbert space $L^2 (\mu)$ can be identified with the Fock
space $\mathcal{H}= \Gamma H^1_0 (\mathbb{T}^2) \assign \bigoplus_{n =
0}^{\infty} (H^1_0 (\mathbb{T}^2))^{\otimes n}$ with $H^1_0 (\mathbb{T}^2)
\assign \{ \psi \in H^1 (\mathbb{T}^2) : \hat{\psi} (0) = 0 \}$ and norm
\[ \| \varphi \|^2 = \sum_{n = 0}^{\infty} n! \| \varphi_n \|^2_{(H^1_0
   (\mathbb{T}^2))^{\otimes n}} = \sum_{n = 0}^{\infty} n! \sum_{k_{1 : n} \in
   (\mathbb{Z}^2_0)^n} \left( \prod_{i = 1}^n | 2 \pi k_i |^2  \right)  |
   \hat{\varphi}_n (k_{1 : n}) |^2, \]
by noting that any $\varphi \in L^2 (\mu)$ can be written in chaos expansion
$\varphi = \sum_{n \geq 0} W_n (\varphi_n)$, where $W_n$ is the $n$-th order
Wiener-It{\^o} integral and $\varphi_n \in H^1_0 (\mathbb{T}^2)^{\otimes n}$
for every $n \in \mathbb{N}$, see
e.g.~{\cite{janson1997gaussian,nualart2006malliavin}} for details. We will use
the convention that $\varphi_n$ is symmetric in its $n$ arguments, that is, we
identify it with its symmetrisation.\quad Note that cylinder functions are
dense in~$\mathcal{H}$. We denote by $\mathcal{N}$ the \tmtextit{number
operator}, i.e.~the self--adjoint operator on $\mathcal{H}$ such that
$(\mathcal{N} \varphi)_n \assign n \varphi_n$. It is well known that the
semigroup generated by the number operator satisfies an hypercontractivity
estimate, see Theorem~1.4.1 in~{\cite{nualart2006malliavin}}. We record it in
the next lemma.

\begin{lemma}
  \label{lemma:hyper-n}For $p \geqslant 2$, let $c_p = \sqrt{p - 1}$. Then
  \[ \| | \varphi |^{p / 2} \|^2 \leqslant \| c_p^{\mathcal{N}} \varphi \|^p,
     \qquad \text{for every } \varphi \in \mathcal{H}. \]
\end{lemma}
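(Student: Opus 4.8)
The plan is to recognise the stated inequality as a reformulation of Nelson's hypercontractivity estimate for the Ornstein--Uhlenbeck semigroup $e^{-t\mathcal{N}}$ acting on $L^2(\mu)$, via the chaos identification $L^2(\mu) \cong \mathcal{H}$. Recall that $e^{-t\mathcal{N}}$ is a bounded operator from $L^q(\mu)$ to $L^p(\mu)$ with operator norm $1$ precisely when $e^{-2t} \leqslant (q-1)/(p-1)$; this is the content of Theorem~1.4.1 in~\cite{nualart2006malliavin} (together with the standard chaos-decomposition formula $(e^{-t\mathcal{N}}\varphi)_n = e^{-nt}\varphi_n$, which is exactly the action of the operator $(e^{-t})^{\mathcal{N}}$ in the notation of the lemma). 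So the strategy is: reduce to the case $q=2$, identify the optimal time $t$, and translate the $L^p(\mu)$-norm back into a Fock-space norm.

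First I would fix $p \geqslant 2$ and set $q = 2$. The hypercontractivity threshold becomes $e^{-2t} \leqslant (2-1)/(p-1) = 1/(p-1)$, i.e.\ $e^{-t} \leqslant (p-1)^{-1/2} = c_p^{-1}$; choose $t = t_p$ so that $e^{-t_p} = c_p^{-1}$. Then Nelson's estimate gives $\|e^{-t_p\mathcal{N}}\varphi\|_{L^p(\mu)} \leqslant \|\varphi\|_{L^2(\mu)}$ for all $\varphi \in L^2(\mu)$. Next I would apply this to a well-chosen argument: given $\varphi \in \mathcal{H}$, set $\psi := c_p^{\mathcal{N}}\varphi = e^{t_p\mathcal{N}}\varphi$ (the chaos components are $\psi_n = c_p^n \varphi_n$; since we only claim an inequality, we may harmlessly assume the right-hand side $\|c_p^{\mathcal{N}}\varphi\|$ is finite, otherwise there is nothing to prove). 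Then $e^{-t_p\mathcal{N}}\psi = \varphi$, so Nelson's inequality yields $\|\varphi\|_{L^p(\mu)} \leqslant \|\psi\|_{L^2(\mu)} = \|c_p^{\mathcal{N}}\varphi\|$. Raising both sides to the $p$-th power gives $\|\varphi\|_{L^p(\mu)}^p \leqslant \|c_p^{\mathcal{N}}\varphi\|^p$.

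It remains to rewrite the left-hand side: under the Fock-space identification, $\|\varphi\|_{L^p(\mu)}^p = \int |\varphi|^p \,\mathrm{d}\mu = \int \big(|\varphi|^{p/2}\big)^2 \,\mathrm{d}\mu = \big\| \,|\varphi|^{p/2}\,\big\|_{L^2(\mu)}^2 = \big\| \,|\varphi|^{p/2}\,\big\|^2$, which is exactly the quantity on the left of the claimed inequality (here $|\varphi|^{p/2} \in L^2(\mu) \cong \mathcal{H}$ precisely because $\varphi \in L^p(\mu)$). Combining the last two displays gives $\big\| \,|\varphi|^{p/2}\,\big\|^2 \leqslant \|c_p^{\mathcal{N}}\varphi\|^p$, as desired.

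There is essentially no obstacle here beyond bookkeeping: the lemma is a packaging of a classical result, and the only points requiring a word of care are (i) checking that the abstract operator $c_p^{\mathcal{N}}$ in the statement coincides with $e^{t_p\mathcal{N}}$ on each chaos so that $e^{-t_p\mathcal{N}}$ inverts it, and (ii) the harmless reduction to the case where $\|c_p^{\mathcal{N}}\varphi\|<\infty$ (equivalently, working first with cylinder functions, which are dense in $\mathcal{H}$, and then passing to the limit using that both sides of the inequality are lower semicontinuous in the appropriate sense). The substantive input — the sharp time-dependent $L^q \to L^p$ bound — is quoted wholesale from~\cite{nualart2006malliavin}.
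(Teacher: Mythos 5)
Your derivation is correct: the reduction of the stated inequality to Nelson's $L^2\to L^p$ hypercontractive bound for $e^{-t\mathcal{N}}$ at the critical time $e^{-t_p}=c_p^{-1}$, applied to $\psi=c_p^{\mathcal{N}}\varphi$, is exactly the right unpacking, and the identification $\|\,|\varphi|^{p/2}\|^2=\|\varphi\|_{L^p(\mu)}^p$ closes the argument. The paper gives no proof at all — it simply records the lemma as a citation of Theorem~1.4.1 in Nualart — so your proposal is the same approach with the bookkeeping made explicit.
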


With these preparations we are ready to give expressions for the operators
$\mathcal{L}_{\theta}$ and $\mathcal{G}^m$ in terms of the Fock space
representation of~$\mathcal{H}$.

\begin{lemma}
  For sufficiently nice $\varphi \in \mathcal{H}$, the operator
  $\mathcal{L}_{\theta}$ is given~by
  \begin{equation}
    \mathcal{F} (\mathcal{L}_{\theta} \varphi)_n (k_{1 : n}) = - (2 \pi)^{2
    \theta} L_{\theta} (k_{1 : n}) \hat{\varphi}_n (k_{1 : n})
    \label{eq:ltheta-fourier}
  \end{equation}
  where $L_{\theta} (k_{1 : n}) \assign | k_1 |^{2 \theta} + \cdots + | k_n
  |^{2 \theta} .$ Moreover, writing $\mathcal{G}^m =\mathcal{G}_+^m
  +\mathcal{G}_-^m$ we have
  \begin{eqnarray}
    \mathcal{F} (\mathcal{G}_+^m \varphi)_n (k_{1 : n}) & = & (n - 1)
    \mathbbm{1}_{| k_1 |, | k_2 |, | k_1 + k_2 | \leqslant m}
    \frac{(k_1^{\perp} \cdot (k_1 + k_2)) ((k_1 + k_2) \cdot k_2)}{| k_1 |^2 |
    k_2 |^2} \hat{\varphi}_{n - 1} (k_1 + k_2, k_{3 : n}), 
    \label{eq:gp-fourier}\\
    \mathcal{F} (\mathcal{G}_-^m \varphi)_n (k_{1 : n}) & = & (2 \pi)^2 (n +
    1) n \sum_{p + q = k_1} \mathbbm{1}_{| k_1 |, | p |, | q | \leqslant m}
    \frac{(k_1^{\perp} \cdot p) (k_1 \cdot q)}{| k_1 |^2} \hat{\varphi}_{n +
    1} (p, q, k_{2 : n}) .  \label{eq:gm-fourier}
  \end{eqnarray}
  For all $\varphi_{n + 1} \in (H^1_0 (\mathbb{T}^2))^{\otimes (n + 1)}$ and
  for all $\varphi_n \in (H^1_0 (\mathbb{T}^2))^{\otimes n}$, we have
  \begin{equation}
    \langle \varphi_{n + 1}, \mathcal{G}_+^m \varphi_n \rangle = - \langle
    \mathcal{G}_-^m \varphi_{n + 1}, \varphi_n \rangle . \label{eq:adj}
  \end{equation}
\end{lemma}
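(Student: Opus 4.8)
The plan is to prove all three identities by explicit computation in the chaos (Fourier) representation of $\mathcal H$, checking each operator identity on a dense class — cylinder functions, or finite chaos expansions — on which everything is classically meaningful.

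For \eqref{eq:ltheta-fourier}: $\mathcal L_\theta$ is the generator of the linear part of the dynamics, i.e. of the Ornstein--Uhlenbeck process $\partial_t\omega=-A^\theta\omega+\sqrt2\,A^{(\theta+1)/2}\xi$ for which $\mu$ is invariant. Since $A$ is diagonalised by the Fourier basis $\{e_k\}$, with $\widehat{A^\theta g}(k)=(2\pi)^{2\theta}|k|^{2\theta}\hat g(k)$, this process is in Fourier coordinates a family of independent one-dimensional Ornstein--Uhlenbeck processes. I would apply $\mathcal L_\theta$ to a test function that is a product of Hermite polynomials in the coordinates $\omega(e_k)$ — equivalently, to the Wick exponentials $\exp(\omega(f)-\tfrac12\|f\|^2_{H^1})$, whose chaos components $f^{\otimes n}/n!$ generate $\mathcal H$ — and use the Hermite identities $H'_m=mH_{m-1}$ and $xH_m=H_{m+1}+mH_{m-1}$. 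The computation then decouples over modes and shows that $\mathcal L_\theta$ acts on the $n$-particle sector by multiplication with $-\sum_{i=1}^n(2\pi)^{2\theta}|k_i|^{2\theta}$, which is \eqref{eq:ltheta-fourier}; concisely, $\mathcal L_\theta=-\mathrm d\Gamma(A^\theta)$, the second quantisation of $A^\theta$.

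For \eqref{eq:gp-fourier}--\eqref{eq:gm-fourier}: start from $\mathcal G^m\varphi=-\sum_i\partial_i\Phi(\omega(f_1),\dots)\langle B_m(\omega),f_i\rangle=-\langle B_m(\omega),D\varphi\rangle$, with $D$ the Malliavin/Gateaux derivative. First compute the Fourier coefficients of $B_m(\omega)=\mathrm{div}\,\Pi_m((K\ast\Pi_m\omega)\Pi_m\omega)$ using $\hat K(p)=(2\pi\iota)^{-1}p^\perp/|p|^2$ and $\widehat{\mathrm{div}\,v}(k)=2\pi\iota\,k\cdot\hat v(k)$, giving $\widehat{B_m(\omega)}(k)=\sum_{p+q=k}\mathbbm 1_{|k|,|p|,|q|\le m}\,\tfrac{k\cdot p^\perp}{|p|^2}\hat\omega(p)\hat\omega(q)$; after symmetrising in $p\leftrightarrow q$ and Wick-ordering, the zeroth-order (Wick-correction) term carries a factor $p^\perp\cdot p=0$ and hence vanishes, so $\langle B_m(\omega),f_i\rangle$ is a pure second Wiener chaos. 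Substituting this into $-\langle B_m(\omega),D\varphi\rangle$ and expanding with the Wiener-chaos product formula, the contribution with no contraction raises the chaos degree by one and, after matching Fourier labels, is the ``creation'' operator $\mathcal G^m_+$ of \eqref{eq:gp-fourier} (it splits the label $k_1+k_2$ of $\hat\varphi_{n-1}$ into $k_1,k_2$), while the contribution with one contraction lowers the degree by one and is the ``annihilation'' operator $\mathcal G^m_-$ of \eqref{eq:gm-fourier} (it merges the labels $p,q$ of $\hat\varphi_{n+1}$ into $k_1=p+q$). The prefactors $n-1$ and $(n+1)n$ are the symmetrisation/counting factors for, respectively, the input slot that is split and the ordered pair of slots of $\hat\varphi_{n+1}$ that are merged, and the various $2\pi$-powers arise from the weights $|2\pi k|^2$ in the $H^1_0$-inner product used to contract. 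Since $B_m(\omega)$ is a second chaos, a double-contraction (degree shift by $-3$) term also appears a priori; it vanishes, which is the telescoping identity behind enstrophy conservation ($\langle B_m(\omega),\Pi_m\omega\rangle\equiv0$): on the constraint $p+q+k=0$ one has $p^\perp\cdot q=q^\perp\cdot k=k^\perp\cdot p$, so the cyclically symmetrised kernel $\tfrac12\,(p^\perp\!\cdot q)(|p|^{-2}-|q|^{-2})$ summed over the three contraction patterns is $0$. Thus $\mathcal G^m=\mathcal G^m_+ +\mathcal G^m_-$.

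Finally, \eqref{eq:adj}: the cleanest route is conceptual. The truncated Euler drift $\omega\mapsto-B_m(\omega)$ is divergence-free on phase space and conserves the energy $\|A^{-1/2}\omega\|^2$ defining $\mu$ (the standard energy/enstrophy conservation of energy-preserving spectral truncations), so the flow it generates preserves $\mu$ and $\mathcal G^m$ is skew-adjoint on $L^2(\mu)$; restricting this skew-adjointness to the pair of chaos sectors $(n,n+1)$ gives exactly $\langle\varphi_{n+1},\mathcal G^m_+\varphi_n\rangle=-\langle\mathcal G^m_-\varphi_{n+1},\varphi_n\rangle$. (It also re-proves the vanishing of the $-3$ block, since that block would have to be adjoint to a nonexistent $+3$ block.) Alternatively one checks \eqref{eq:adj} by directly comparing the two explicit kernels in \eqref{eq:gp-fourier}--\eqref{eq:gm-fourier}, relabelling $k_1+k_2=\kappa$ and using $|2\pi\kappa|^2/|\kappa|^2=(2\pi)^2$ together with $\kappa^\perp\!\cdot p=q^\perp\!\cdot p=-\,p^\perp\!\cdot q$. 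I expect the main obstacle to be the derivation of \eqref{eq:gp-fourier}--\eqref{eq:gm-fourier}: pushing $\langle B_m(\omega),D\varphi\rangle$ through the Wiener-chaos product formula while keeping exact track of the $2\pi$-powers, of the $\mathbbm1_{|\cdot|\le m}$ cut-offs, and of the symmetrisation constants, and verifying that the double-contraction term drops out; once the chaos formalism and the conservation structure are in place, the $\mathcal L_\theta$ identity and the adjointness \eqref{eq:adj} are comparatively routine.
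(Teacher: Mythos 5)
Your plan is correct and follows exactly the route the paper intends: the paper's own ``proof'' is a one-line pointer to the analogous chaos-expansion computations in Lemma~3.7 of~\cite{gubinelli2018energy} (for $\mathcal{L}_{\theta}$) and Lemmas~2.4 and~2.7 of~\cite{gubinelli2018infinitesimal} (for $\mathcal{G}^m_{\pm}$), and your outline --- second quantisation for $\mathcal{L}_{\theta}$, the Fourier formula for $B_m$, the Wiener-chaos product expansion with the Wick-correction term killed by $k \cdot p^{\perp} = 0$ on $k = p + q = 0$ and the double-contraction (chaos $-3$) term killed by the symmetrised telescoping identity on $k = p + q$, and skew-adjointness from the Liouville property plus energy conservation of the truncated Euler drift --- is precisely what those computations do. The only remaining work is the bookkeeping of the symmetrisation constants and powers of $2\pi$ (which do match: e.g.\ in the adjointness check the weights $|2\pi k_1|^2 |2\pi k_2|^2$ cancel the denominators $|k_1|^2 |k_2|^2$ and the sign comes from $p^{\perp} \cdot (p+q) = -(p+q)^{\perp} \cdot p$), which you correctly flag as the fiddly part.
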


\begin{proof}
  The computations are analogous to those of Lemma~3.7
  of~{\cite{gubinelli2018energy}} for $\mathcal{L}_{\theta}$ and of Lemma~2.4
  and Lemma~2.7 in~{\cite{gubinelli2018infinitesimal}}.
\end{proof}

\begin{remark}
  $\mathcal{G}_+^m$ and $\mathcal{G}_-^m$ are (unbounded) operators which
  increase and decrease, respectively, the ``number of particles'' by one.
  Moreover, we know from~{\eqref{eq:adj}} that they are formally the adjoint
  of the other (modulo a sign change).
\end{remark}

A key result is given by the following bounds for $\mathcal{G}_{\pm}^m$ acting
on weighted subspaces of~$\mathcal{H}$.

\begin{lemma}
  \label{lem:uniform-bds-G}Let $w : \mathbb{N}_0 \rightarrow \mathbb{R}_+$ and
  $\varphi \in \mathcal{H}$. The following $m$-dependent bound holds:
  \begin{equation}
    \| w (\mathcal{N}) \mathcal{G}^m \varphi \| \lesssim m \| (w (\mathcal{N}+
    1) + w (\mathcal{N}- 1)) (1 +\mathcal{N}) (1 -\mathcal{L}_{\theta})^{1 /
    2} \varphi \| . \label{eq:m-dep-G}
  \end{equation}
  Moreover, uniformly in~$m$, we have
  \begin{equation}
    \| w (\mathcal{N}) (1 -\mathcal{L}_{\theta})^{- \gamma} \mathcal{G}_+^m
    \varphi \| \lesssim \| w (\mathcal{N}+ 1) (1 +\mathcal{N}) (1
    -\mathcal{L}_{\theta})^{(1 + 1 / \theta) / 2 - \gamma} \varphi \|, \qquad
    \text{for all } \gamma > \frac{1}{2 \theta}, \label{eq:unifbdG1}
  \end{equation}
  and
  \begin{equation}
    \| w (\mathcal{N}) (1 -\mathcal{L}_{\theta})^{- \gamma} \mathcal{G}_-^m
    \varphi \| \lesssim \| w (\mathcal{N}- 1) \mathcal{N}^{3 / 2} (1
    -\mathcal{L}_{\theta})^{(1 + 1 / \theta) / 2 - \gamma} \varphi \|, \qquad
    \text{for all } \gamma < \frac{1}{2} . \label{eq:unifbdG2}
  \end{equation}
\end{lemma}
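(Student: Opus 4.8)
The plan is to prove all three estimates by one and the same mechanism: pass to the Fourier/Fock-space side through \eqref{eq:ltheta-fourier}--\eqref{eq:gm-fourier}, bound the Biot--Savart kernels pointwise, apply a weighted Cauchy--Schwarz to the convolution $\sum_{p+q=k_1}$ entering $\mathcal{G}_-^m$ (for $\mathcal{G}_+^m$ the convolved mode does not enter $\hat{\varphi}_{n-1}$, so that convolution can be summed directly without Cauchy--Schwarz; relation \eqref{eq:adj}, which makes $\mathcal{G}_+^m$ and $\mathcal{G}_-^m$ formal adjoints up to sign, could also be exploited), and finally sum one lattice series over $\mathbb{Z}_0^2$. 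The three statements differ only in which series has to converge.

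First I would record the pointwise kernel bounds. Since $k^{\perp}\cdot k=0$ one rewrites $k_1^{\perp}\cdot(k_1+k_2)=k_1^{\perp}\cdot k_2$ and, for $p+q=k_1$, $k_1^{\perp}\cdot p=-k_1^{\perp}\cdot q=q^{\perp}\cdot p$; together with $|a^{\perp}|=|a|$ this gives
\[
\left|\frac{(k_1^{\perp}\cdot(k_1+k_2))((k_1+k_2)\cdot k_2)}{|k_1|^2|k_2|^2}\right|\lesssim\frac{|k_1+k_2|^2}{|k_1|\,|k_2|},\qquad
\left|\frac{(k_1^{\perp}\cdot p)(k_1\cdot q)}{|k_1|^2}\right|\lesssim|p|\,|q|.
\]
Multiplying by the ratio $\prod_i|2\pi k_i|^2$ of the $H^1_0$-weights between level $n$ and level $n\mp 1$, all the weights carried by the ``convolved'' variables cancel and a single clean factor $|2\pi k|^2$ survives, $k$ being the mode carried by the divergence (that is, $k_1+k_2$ for $\mathcal{G}_+^m$ and $k_1$ for $\mathcal{G}_-^m$); this is the one derivative produced by the divergence in the nonlinearity, and because $\theta>1$ it is dominated by $(1-\mathcal{L}_{\theta})$, so that an extra $(1-\mathcal{L}_{\theta})^{1}$ pays for it.

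For \eqref{eq:unifbdG2} I would then, at fixed $k_{1:n}$, write each summand of $\sum_{p+q=k_1}$ as $a_{p,q}\,c_{p,q}^{-1/2}\cdot\hat{\varphi}_{n+1}(p,q,k_{2:n})\,c_{p,q}^{1/2}$ with $a_{p,q}$ the kernel, and choose the weight $c_{p,q}$ so that, once the outer factors $\rho_n^2\,(1+(2\pi)^{2\theta}L_{\theta}(k_{1:n}))^{-2\gamma}$ are reinstated, the piece $\sum_{p+q=k_1}c_{p,q}|\hat{\varphi}_{n+1}|^2$ reassembles $\|w(\mathcal{N}-1)\mathcal{N}^{3/2}(1-\mathcal{L}_{\theta})^{(1+1/\theta)/2-\gamma}\varphi\|^2$ after summation over all $(n+1)$-tuples. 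With that choice the complementary piece is, up to constants,
\[
\sum_{p+q=k_1}\frac{a_{p,q}^2}{c_{p,q}}\ \lesssim\ \frac{|k_1|^2}{(1+(2\pi)^{2\theta}L_{\theta}(k_{1:n}))^{2\gamma}}\sum_{p\in\mathbb{Z}_0^2}\bigl(1+|p|^{2\theta}+|k_1-p|^{2\theta}+R\bigr)^{2\gamma-(1+1/\theta)},
\]
with $R$ the contribution of $k_{2:n}$ to $L_{\theta}$. The $\mathbb{Z}_0^2$-sum is finite exactly when $2\theta((1+1/\theta)-2\gamma)>2$, i.e.\ $\gamma<\tfrac12$, and is then $\lesssim(1+|k_1|^{2\theta}+R)^{2\gamma-1}$; combined with the prefactor and with $|k_1|^{2-2\theta}\lesssim 1$ (here $\theta>1$ enters again) this yields $\sum_{p+q=k_1}a_{p,q}^2/c_{p,q}\lesssim 1$ uniformly in $k_{1:n}$, hence \eqref{eq:unifbdG2}. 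The estimate \eqref{eq:unifbdG1} is the same computation with $\hat{\varphi}_{n-1}(k_1+k_2,k_{3:n})$ constant along the convolution: it reduces to the convergence of a series $\sum_{k_1\in\mathbb{Z}_0^2}(1+|k_1|^{2\theta}+|k_2|^{2\theta}+R)^{-2\gamma}$ at fixed $k_1+k_2$, which forces $\gamma>\tfrac1{2\theta}$. For the $m$-dependent bound \eqref{eq:m-dep-G} one does not use the lattice series for decay at all: keeping only $(1-\mathcal{L}_{\theta})^{1/2}$ on the $\varphi$-side (so $c_{p,q}$ carries $L_{\theta}$-exponent $1$) and using the frequency cutoff $|k_1|\leqslant m$ (resp.\ $|k_1+k_2|\leqslant m$) built into $B_m$, the complementary piece is bounded by $|k_1|^2\sum_{p\in\mathbb{Z}_0^2}(1+|p|^{2\theta})^{-1}\lesssim|k_1|^2\lesssim m^2$, where $\theta>1$ is what makes the $p$-sum converge.

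Throughout, the tensors output by \eqref{eq:gp-fourier}--\eqref{eq:gm-fourier} live in the \emph{symmetric} Fock space and must be symmetrised; tracking the combinatorial factors this produces, together with the ratio $n!/(n\mp1)!$ between the Fock norms at adjacent levels, is what pins down the precise powers of $\mathcal{N}$ --- $(1+\mathcal{N})$ for the particle-creating $\mathcal{G}_+^m$, $\mathcal{N}^{3/2}$ for the particle-annihilating $\mathcal{G}_-^m$, and a single $(1+\mathcal{N})$ for the combined $m$-dependent bound. I expect the real work to be the choice of $c_{p,q}$: it must at once (i) reassemble the $\varphi$-side norm with exactly the stated weight, (ii) leave a lattice series convergent in the stated $\gamma$-range, and (iii) leave a residual $|k_1|^2$ that can be absorbed --- all of which goes through only because $\theta>1$, and it is this numerology that dictates the exponent $(1+1/\theta)/2$ on the right-hand sides.
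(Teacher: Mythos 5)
Your proposal is correct and follows essentially the same route as the paper's proof in Section~\ref{sec:bounds}: Fourier/Fock representation, the pointwise kernel bounds with the resulting single surviving factor $|k|^2$ carried by the divergence mode, direct summation of the convolution for $\mathcal{G}_+^m$ versus weighted Cauchy--Schwarz for $\mathcal{G}_-^m$, the lattice convolution estimate (the paper's Lemma~\ref{lem:app-estimate}) producing exactly the thresholds $\gamma>\tfrac{1}{2\theta}$ and $\gamma<\tfrac12$ and the exponent $(1+1/\theta)/2$, and the symmetrisation/combinatorial bookkeeping fixing the powers of $\mathcal{N}$. The only cosmetic difference is that for the $m$-dependent bound on $\mathcal{G}_+^m$ the factor $m^2$ arises from counting the $O(m^2)$ lattice points in the truncated convolution rather than from a convergent $p$-sum, but this is exactly what "summing the convolution directly under the cutoff" amounts to.
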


These bounds will be proven later on in Section~\ref{sec:bounds}. In view of
eq.~{\eqref{eq:m-dep-G}}, it is natural to identify a dense domain
$\mathcal{D} (\mathcal{L}^m)$ for $\mathcal{L}^m$ as
\[ \mathcal{D} (\mathcal{L}^m) \assign \{ \varphi \in \mathcal{H}: \| (1
   +\mathcal{N}) (1 -\mathcal{L}_{\theta}) \varphi \| < \infty \} = (1
   +\mathcal{N})^{- 1} (1 -\mathcal{L}_{\theta})^{- 1} \mathcal{H}. \]
Note that $\langle \psi, (\mathcal{L}_{\theta} +\mathcal{G}^m) \varphi \rangle
= \langle (\mathcal{L}_{\theta} -\mathcal{G}^m) \psi, \varphi \rangle$ for
$\psi, \varphi \in \mathcal{D} (\mathcal{L}^m)$ and in particular that
$\mathcal{L}_{\theta}$ is dissipative since for all $\varphi \in \mathcal{D}
(\mathcal{L}^m)$ we have
\[ \langle \varphi, (\mathcal{L}_{\theta} +\mathcal{G}^m) \varphi \rangle =
   \langle \mathcal{L}_{\theta} \varphi, \varphi \rangle = - \|
   (-\mathcal{L}_{\theta})^{1 / 2} \varphi \|^2 \leqslant 0. \]

A priori $\mathcal{L}^m$ is only the restriction to $\mathcal{D}
(\mathcal{L}^m)$ of the generator $\hat{\mathcal{L}}^m$ of the semigroup
$(T^m_t)_t$. However, we will also prove in Lemma~\ref{eq:lemma-lm} below that
the operator $\mathcal{L}^m$ is closable and that its closure is indeed the
generator~$\hat{\mathcal{L}}^m$.

\

In order to exploit these pieces of information, we have to work with
solutions of Galerkin approximations having ``near-stationary'' fixed-time
marginal.

\begin{definition}
  We say that a stochastic process $(\omega_t)_{t \geqslant 0}$ with values in
  $\mathcal{S}' (\mathbb{T}^2)$ is ($L^2$)-incompressible if, for all $T > 0$,
  there exists a constant $C (T)$ such that we have
  \[ \sup_{0 \leqslant t \leqslant T} \mathbb{E} | \varphi (\omega_t) |
     \leqslant C (T) \| \varphi \|, \qquad \varphi \in \mathcal{C}. \]
\end{definition}

For an incompressible process $(\omega_t)_{t \geqslant 0}$ it makes sense,
using a density argument involving cylinder functions, to define $s \mapsto
\varphi (\omega_s)$ for all $\varphi \in \mathcal{H}$ as a stochastic process
continuous in~$L^1$.

\begin{lemma}
  \label{lem:est-eta}Let $\mathbb{E}_{\eta \mathd \mu}$ be the law of the
  solution $\omega^m$ to the Galerkin approximation~{\eqref{eq:ns-vort-appr}}
  starting from an initial condition $\omega^m_0 \sim \eta \mathd \mu$ with
  $\eta \in L^2 (\mu)$. Then, for any $\Psi : C (\mathbb{R}_+ ; \mathcal{S}')
  \rightarrow \mathbb{R}$,
  \[ \mathbb{E}_{\eta \mathd \mu} | \Psi (\omega^m) | \leqslant \| \eta \|
     \mathbb{E}_{\mu} (\Psi (\omega^m)^2)^{1 / 2} . \]
  In particular, any such process is incompressible uniformly in~$m$.
\end{lemma}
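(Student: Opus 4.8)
The statement to prove is Lemma~\ref{lem:est-eta}: that under the Galerkin dynamics started from $\eta\,\mathd\mu$ with $\eta\in L^2(\mu)$, one has $\mathbb{E}_{\eta\mathd\mu}|\Psi(\omega^m)|\le\|\eta\|\,\mathbb{E}_\mu(\Psi(\omega^m)^2)^{1/2}$, and hence uniform incompressibility. The natural approach is a change-of-measure (Radon--Nikodym) argument on path space, exploiting that $\mu$ is invariant for $\omega^m$ (Proposition~1). The key observation is that the law $\mathbb{P}_{\eta\mathd\mu}$ on $C(\mathbb{R}_+;\mathcal{S}')$ is absolutely continuous with respect to the stationary law $\mathbb{P}_\mu$, with density given simply by $\eta$ evaluated at the \emph{initial} configuration, i.e. $\mathd\mathbb{P}_{\eta\mathd\mu}/\mathd\mathbb{P}_\mu=\eta(\omega^m_0)$. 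Granting this, the bound is immediate: $\mathbb{E}_{\eta\mathd\mu}|\Psi(\omega^m)|=\mathbb{E}_\mu[\eta(\omega^m_0)\,|\Psi(\omega^m)|]\le\mathbb{E}_\mu[\eta(\omega^m_0)^2]^{1/2}\,\mathbb{E}_\mu[\Psi(\omega^m)^2]^{1/2}$ by Cauchy--Schwarz, and $\mathbb{E}_\mu[\eta(\omega^m_0)^2]=\|\eta\|^2$ since $\omega^m_0\sim\mu$ under $\mathbb{P}_\mu$.

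First I would establish the Radon--Nikodym identity. For a process with transition semigroup $T^m_t$ and initial law $\rho\,\mathd\mu$, any cylindrical finite-dimensional marginal functional $F=F(\omega_{t_1},\dots,\omega_{t_k})$ satisfies $\mathbb{E}_{\rho\mathd\mu}[F]=\int\rho(\omega_0)\,\mathbb{E}_{\omega_0}[F]\,\mu(\mathd\omega_0)=\mathbb{E}_\mu[\rho(\omega^m_0)F]$ by the Markov property and the disintegration of $\mathbb{P}_{\rho\mathd\mu}$ over the starting point; this is just unravelling the definition $\mathbb{P}_{\rho\mathd\mu}(\cdot)=\int\mathbb{P}_{\omega_0}(\cdot)\,\rho(\omega_0)\mu(\mathd\omega_0)$. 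Since such cylindrical functionals are measure-determining on $C(\mathbb{R}_+;\mathcal{S}')$, this extends to all bounded measurable $\Psi$, and then to general measurable $\Psi$ by monotone approximation (applying the inequality to $|\Psi|\wedge N$ and letting $N\to\infty$). One subtlety worth a sentence: $\eta\in L^2(\mu)$ is only defined $\mu$-a.e., but $\omega^m_0\sim\mu$, so $\eta(\omega^m_0)$ is well-defined $\mathbb{P}_\mu$-a.s., and the right-hand side $\mathbb{E}_\mu(\Psi(\omega^m)^2)^{1/2}$ should be read as possibly $+\infty$, in which case the bound is trivial.

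For the ``in particular'' clause: apply the main inequality with $\Psi(\omega^m)=\varphi(\omega^m_t)$ for fixed $t\le T$ and $\varphi\in\mathcal{C}$ a cylinder function. Then $\mathbb{E}_{\eta\mathd\mu}|\varphi(\omega^m_t)|\le\|\eta\|\,\mathbb{E}_\mu[\varphi(\omega^m_t)^2]^{1/2}=\|\eta\|\,\|\varphi\|_{L^2(\mu)}$, where the last equality uses invariance of $\mu$ (so the law of $\omega^m_t$ under $\mathbb{P}_\mu$ is again $\mu$) together with the identification $L^2(\mu)\cong\mathcal{H}$ so that $\|\varphi\|_{L^2(\mu)}=\|\varphi\|$. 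Taking the supremum over $t\in[0,T]$ gives incompressibility with constant $C(T)=\|\eta\|$, which is moreover independent of $m$.

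The argument is essentially routine measure theory; the only place requiring a little care is the passage from finite-dimensional cylindrical marginals to the full path functional $\Psi$, which rests on the fact that the Borel $\sigma$-algebra on $C(\mathbb{R}_+;\mathcal{S}')$ is generated by the evaluation maps. I would also remark that no quantitative information about the drift $B_m$ is used here beyond the existence, Markov property and $\mu$-invariance of $\omega^m$ from Proposition~1 --- the strength of the lemma comes precisely from its softness, and it is this that makes it robust enough to be applied uniformly in $m$.
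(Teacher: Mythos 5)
Your proposal is correct and follows essentially the same route as the paper: the identity $\mathbb{E}_{\eta\mathd\mu}|\Psi(\omega^m)|=\mathbb{E}_\mu[\eta(\omega^m_0)|\Psi(\omega^m)|]$ followed by Cauchy--Schwarz, with incompressibility deduced from the $m$-independent invariance of $\mu$. You merely spell out in more detail the disintegration and approximation steps that the paper leaves implicit.
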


\begin{proof}
  We get
  \[ \mathbb{E}_{\eta \mathd \mu} | \Psi (\omega^m) | =\mathbb{E}_{\mu} [\eta
     (w_0) | \Psi (\omega^m) |] \leqslant \| \eta \| (\mathbb{E}_{\mu} \Psi
     (\omega^m)^2)^{1 / 2} . \]
  Incompressibility easily follows from the fact that $\mu$ is an invariant
  measure for the Galerkin approximations independently of~$m$.
\end{proof}

\begin{definition}
  A {\tmem{weight}} is a measurable increasing map $w : \mathbb{R}_+
  \rightarrow (0, \infty)$ such that there exists $C > 0$ with $w (x)
  \leqslant C w (x + y)$, for all $x \geqslant 1$ and for $| y | \leqslant 1$.
  We write as $| w |$ the smallest such constant~$C$. We denote $w
  (\mathcal{N})$ the self-adjoint operator on $\mathcal{H}$ defined as
  spectral multiplier.
\end{definition}

We will use the notation $D_x$ to indicate the Malliavin derivative, see
e.g.~{\cite{gubinelli2018infinitesimal}}, which acts on cylinder functions
$\varphi$ as in Definition~\ref{def:cylfn} as follows,
\[ D_x \varphi = \sum_{k = 1}^n \partial_{x_k} \Phi (\omega (f_1), \ldots,
   \omega (f_n)) f_k (x) . \]
\begin{lemma}
  \label{lemma:galerkin-estimates}Let $\eta \in L^2 (\mu)$ and let $\omega^m$
  be a solution to~{\eqref{eq:ns-vort-appr}} with $\tmop{Law} (\omega^m_0)
  \sim \eta \mathd \mu$. Then this solution is incompressible and, for any
  $\varphi \in \mathcal{D} (\mathcal{L}^m)$, the process
  \[ M_t^{m, \varphi} = \varphi (\omega^m_t) - \varphi (\omega^m_0) - \int_0^t
     \mathcal{L}^m \varphi (\omega_s^m) \mathd s, \qquad t \geqslant 0,
     \label{eq:mart-prob-m} \]
  is a continuous martingale with quadratic variation
  \begin{equation}
    \langle M^{m, \varphi} \rangle_t = \int_0^t \mathcal{E} (\varphi)
    (\omega^m_s) \mathd s, \qquad \text{with} \qquad \mathcal{E} (\varphi) = 2
    \int_{\mathbb{T}^2} |A^{\frac{\theta + 1}{2}}_x D_x \varphi |^2 \mathd x.
    \label{eq:mart-qvar}
  \end{equation}
  For any weight $w$, we have
  \begin{equation}
    \| w (\mathcal{N}) (\mathcal{E} (\varphi))^{1 / 2} \| \lesssim \| w
    (\mathcal{N}- 1) (1 -\mathcal{L}_{\theta})^{1 / 2} \varphi \| .
    \label{eq:energy-norm}
  \end{equation}
  Moreover, for all $p \geqslant 1$, it holds
  \begin{equation}
    \mathbb{E} \sup_{t \in [0, T]} \left| \int_0^t \varphi (\omega_s^m) \mathd
    s \right|^p \lesssim (T^{p / 2} \vee T^p) \| c_{2 p}^{\mathcal{N}} (1
    -\mathcal{L}_{\theta})^{- 1 / 2} \varphi \|^p, \label{eq:ito-trick-m}
  \end{equation}
  uniformly in~$m$.
\end{lemma}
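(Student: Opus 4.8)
The plan is to establish the four assertions of Lemma~\ref{lemma:galerkin-estimates} in order, treating incompressibility and the martingale/quadratic-variation identity as consequences of the It\^o formula~\eqref{eq:ito-cyl} combined with the density of cylinder functions, and then deriving the two quantitative bounds~\eqref{eq:energy-norm} and~\eqref{eq:ito-trick-m} from the Fock-space representation and hypercontractivity.

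\textbf{Incompressibility and the martingale property.} Incompressibility is immediate from Lemma~\ref{lem:est-eta} applied with $\Psi(\omega^m) = \varphi(\omega^m_t)$, since then $\mathbb{E}_{\eta\mathrm{d}\mu}|\varphi(\omega^m_t)| \le \|\eta\|\,\mathbb{E}_\mu(\varphi(\omega^m_t)^2)^{1/2} = \|\eta\|\,\|\varphi\|$ using invariance of $\mu$ under $T^m$. For $\varphi \in \mathcal{C}$ the process $M^{m,\varphi}$ is a martingale by~\eqref{eq:ito-cyl}: the It\^o correction is exactly $\mathcal{L}^m\varphi$, and the stochastic integral is a genuine martingale because $\langle M(f_i)\rangle_t = 2t\|A^{(\theta+1)/2}f_i\|^2_{L^2}$ is finite. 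To pass from cylinder functions to general $\varphi \in \mathcal{D}(\mathcal{L}^m)$, I approximate $\varphi$ by cylinder functions $\varphi^{(j)}$ with $(1+\mathcal{N})(1-\mathcal{L}_\theta)\varphi^{(j)} \to (1+\mathcal{N})(1-\mathcal{L}_\theta)\varphi$ in $\mathcal{H}$; the bound~\eqref{eq:m-dep-G} of Lemma~\ref{lem:uniform-bds-G} controls $\mathcal{L}^m\varphi^{(j)} \to \mathcal{L}^m\varphi$ in $\mathcal{H}$, hence (by incompressibility and stationarity) in $L^1$ of the path space uniformly on $[0,T]$, so the martingale property and the continuity are preserved in the limit.

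\textbf{The quadratic variation.} The expression $\langle M(f_i)\rangle_t$ together with the bilinear structure in~\eqref{eq:ito-cyl} gives, for $\varphi \in \mathcal{C}$, $\langle M^{m,\varphi}\rangle_t = \int_0^t \sum_{i,j}\partial_i\Phi\,\partial_j\Phi\, 2\langle A^{(\theta+1)/2}f_i, A^{(\theta+1)/2}f_j\rangle\,\mathrm{d}s$, which is precisely $\int_0^t \mathcal{E}(\varphi)(\omega^m_s)\,\mathrm{d}s$ with $\mathcal{E}(\varphi) = 2\int_{\mathbb{T}^2}|A^{(\theta+1)/2}_x D_x\varphi|^2\mathrm{d}x$ after recognising $D_x\varphi = \sum_k \partial_k\Phi f_k(x)$. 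For general $\varphi$ one passes to the limit using~\eqref{eq:energy-norm}, which I prove first at the Fock level: writing $D_x\varphi$ and applying $A^{(\theta+1)/2}_x$ in Fourier space produces weights $|2\pi k_i|^{\theta+1}$, and then $\|w(\mathcal{N})\mathcal{E}(\varphi)^{1/2}\|^2$ unfolds into a sum over chaoses of $w(n-1)^2 \, n \cdot n! \sum_{k_{1:n}} (\prod|2\pi k_i|^2) |2\pi k_1|^{2\theta} |\hat\varphi_n(k_{1:n})|^2$; since $|2\pi k_1|^{2\theta} \le (2\pi)^{2\theta}L_\theta(k_{1:n})$, this is bounded by $\|w(\mathcal{N}-1)(1-\mathcal{L}_\theta)^{1/2}\varphi\|^2$ up to constants, using~\eqref{eq:ltheta-fourier}. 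The factor $n$ is absorbed because $\mathcal{E}$ involves a single Malliavin derivative lowering the chaos by one; I should double-check the combinatorial constant from symmetrisation, but it is bounded by $n$ and hence harmless.

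\textbf{The It\^o-trick bound~\eqref{eq:ito-trick-m}.} This is the main obstacle and the heart of the lemma. The idea is the standard Gon\c{c}alves--Jara It\^o trick: for the \emph{stationary} solution (under $\mathbb{E}_\mu$) apply~\eqref{eq:ito-cyl} both forward and backward in time to the resolvent-type function $\psi := (1-\mathcal{L}_\theta)^{-1/2}\varphi$ (or better, to a function chosen so that the drift term reproduces $\varphi$), add the two It\^o expansions so that the drift contributions combine into $-\int_0^t(-\mathcal{L}_\theta)\psi\,\mathrm{d}s$ plus martingale terms, and thereby express $\int_0^t\varphi(\omega^m_s)\mathrm{d}s$ as a sum of a boundary term $\psi(\omega^m_t)-\psi(\omega^m_0)$, a forward martingale, and a backward martingale. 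Then Burkholder--Davis--Gundy controls the $L^p$-norm of the supremum of each martingale by the $L^{p/2}$-norm of its quadratic variation, which by~\eqref{eq:energy-norm} is $\lesssim (T^{p/2}\vee T^p)$ times $\|w(\mathcal{N})(1-\mathcal{L}_\theta)^{1/2}\psi\|^p$-type quantities with $w$ taken to incorporate the hypercontractivity constant $c_{2p}$; using Lemma~\ref{lemma:hyper-n} to turn the $L^2(\mu)$-norm into an $L^p(\mu)$-norm one arrives at $\|c_{2p}^{\mathcal{N}}(1-\mathcal{L}_\theta)^{-1/2}\varphi\|^p$. The boundary term $\psi(\omega^m_t)$ is handled by incompressibility, $\mathbb{E}|\psi(\omega^m_t)| \le \|\psi\| = \|(1-\mathcal{L}_\theta)^{-1/2}\varphi\|$, again after inserting the hypercontractive weight for the higher moment. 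The delicate points are: (i) justifying the backward It\^o formula for the Galerkin dynamics, for which I invoke that $\omega^m$ is a stationary Markov process with invariant measure $\mu$ and generator whose adjoint in $L^2(\mu)$ is $\mathcal{L}_\theta - \mathcal{G}^m$ (the formal adjointness noted after the definition of $\mathcal{D}(\mathcal{L}^m)$), so the time-reversed process solves the analogous equation with drift $\mathcal{G}^m$ replaced by $-\mathcal{G}^m$; (ii) making sure all estimates are uniform in $m$, which is exactly why I use~\eqref{eq:energy-norm} (uniform in $m$) rather than the $m$-dependent~\eqref{eq:m-dep-G}; and (iii) first proving~\eqref{eq:ito-trick-m} for cylinder $\varphi$ and then extending by the density/closability argument, noting that both sides are continuous in the relevant norms.
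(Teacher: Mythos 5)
Your proposal follows essentially the same route as the paper: It\^o's formula on cylinder functions, Doob/BDG plus Lemma~\ref{lem:est-eta}, the Fock-space computation of $\|w(\mathcal{N})\mathcal{E}(\varphi)^{1/2}\|$, the forward--backward It\^o trick with the time-reversed generator $\mathcal{L}_\theta-\mathcal{G}^m$, hypercontractivity, and a density argument via~\eqref{eq:m-dep-G}. One detail you dismiss is not actually harmless, though. In the energy-norm computation the Malliavin derivative contributes $(n-1)!\,n^2 = n!\cdot n$ to the $n$-th chaos, so after your bound $|2\pi k_1|^{2\theta}\lesssim L_\theta(k_{1:n})$ an extra factor $n$ survives; a ``combinatorial constant bounded by $n$'' in a chaos expansion changes the norm, and what you would obtain is $\|w(\mathcal{N}-1)\mathcal{N}^{1/2}(1-\mathcal{L}_\theta)^{1/2}\varphi\|$ rather than~\eqref{eq:energy-norm}. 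The paper kills that factor exactly by using the symmetry of $\hat\varphi_n$ \emph{before} introducing $L_\theta$: for symmetric $\hat\varphi_n$ one has $n\sum_{k_{1:n}}|k_1|^{2\theta}F(k_{1:n}) = \sum_{k_{1:n}}L_\theta(k_{1:n})F(k_{1:n})$, with no loss. (For the downstream use in~\eqref{eq:ito-trick-m} the extra $\mathcal{N}^{1/2}$ could be absorbed into a slightly larger exponential weight, but the lemma asserts~\eqref{eq:energy-norm} as stated, so the symmetrisation step is needed.) Finally, the correct resolvent in the It\^o trick is $\psi=(1-\mathcal{L}_\theta)^{-1}\varphi$, writing $\varphi=(-\mathcal{L}_\theta)\psi+\psi$ with the zeroth-order term producing the $T^p$ contribution; your choice $(1-\mathcal{L}_\theta)^{-1/2}\varphi$ does not make the symmetric drift reproduce $\varphi$, though your parenthetical hedge acknowledges this.
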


\begin{proof}
  If $\varphi$ is a cylinder function, then we have eq.~{\eqref{eq:ito-cyl}}
  and in that case Doob's inequality and Lemma~\ref{lem:est-eta} yield, for
  all $T > 0$,
  \[ \mathbb{E} \sup_{t \in [0, T]} | M_t^{m, \varphi} | \lesssim \mathbb{E}
     (\langle M^{m, \varphi} \rangle_T^{1 / 2}) \leqslant \| \eta \|
     \mathbb{E}_{\mu} (\langle M^{m, \varphi} \rangle_T)^{1 / 2} \lesssim \|
     \eta \| T^{1 / 2} \| (\mathcal{E} (\varphi))^{1 / 2} \|_{L^2 (\mu)} . \]
  The norm appearing on the right-hand side can be estimated as follows:
  \begin{eqnarray*}
    \| w (\mathcal{N}) (\mathcal{E} (\varphi))^{1 / 2} \|^2 & = & 2 \int_x
    \left\| w (\mathcal{N}) A^{\frac{\theta + 1}{2}}_x D_x \varphi \right\|^2
    \mathd x\\
    & = & 2 \int_x \left( \sum_{n \geq 0} (n - 1) !w (n - 1)^2 n^2 \left\|
    A^{\frac{\theta + 1}{2}}_x \varphi_n (x, \cdot) \right\|^2_{H^1_0
    (\mathbb{T}^2)^{\otimes (n - 1)}} \right)\\
    & \simeq & 2 \sum_{n \geq 1} n!w (n - 1)^2 n \sum_{k_{1 : n}} \left(
    \prod_{i = 2}^n | 2 \pi k_i |^2 \right) | k_1 |^{2 (\theta + 1)} |
    \hat{\varphi}_n (k_{1 : n}) |^2\\
    & = & 2 \sum_{n \geq 1} n!w (n - 1)^2 n \sum_{k_{1 : n}} \left( \prod_{i
    = 1}^n | 2 \pi k_i |^2 \right) | k_1 |^{2 \theta} | \hat{\varphi}_n (k_{1
    : n}) |^2\\
    & = & 2 \sum_{n \geq 1} n!w (n - 1)^2 \sum_{k_{1 : n}} \left( \prod_{i =
    1}^n | 2 \pi k_i |^2 \right) L_{\theta} (k_{1 : n}) | \hat{\varphi}_n
    (k_{1 : n}) |^2\\
    & \lesssim & 2 \| w (\mathcal{N}- 1) (1 -\mathcal{L}_{\theta})^{1 / 2}
    \varphi \|^2,
  \end{eqnarray*}
  where we used a symmetrisation in the arguments of $\hat{\varphi}_n$ in the
  5th line. Using the bounds~{\eqref{eq:m-dep-G}}
  and~{\eqref{eq:energy-norm}}, one can extend
  formula~{\eqref{eq:mart-prob-m}} to all functions in $\mathcal{D}
  (\mathcal{L}^m)$ by a density argument.
  
  \
  
  As far as~{\eqref{eq:ito-trick-m}} is concerned, let us remark that,
  provided the process $\omega^m$ is started from its stationary measure
  $\mu$, then the \tmtextit{reversed process} $(\tilde{\omega}_t = \omega_{T -
  t})_{t \geqslant 0}$ is also stationary and with (martingale) generator
  $\tilde{\mathcal{L}}^m =\mathcal{L}_{\theta} -\mathcal{G}^m$. The
  forward--backward It{\^o} trick used in~{\cite{gubinelli2013regularization}}
  allows us to represent additive functionals of the form $\int_0^t
  \mathcal{L}_{\theta} \psi (\omega_s^m) \mathd s$ as a sum of forward and
  backward martingales whose quadratic variations
  satisfy~{\eqref{eq:mart-qvar}}. Therefore,
  \begin{equation}
    \begin{array}{l}
      \mathbb{E}_{\mu} \left[ \sup_{t \in [0, T]} \left| \int_0^t
      \mathcal{L}_{\theta} \varphi (\omega_s^m) \mathd s \right|^p \right]
      \lesssim T^{p / 2} \| (\mathcal{E} (\varphi))^{p / 4} \|^2\\
      \qquad \lesssim T^{p / 2} \| c_p^{\mathcal{N}} \mathcal{E} (\varphi)^{1
      / 2} \|^p \lesssim T^{p / 2} \| c_p^{\mathcal{N}} (1
      -\mathcal{L}_{\theta})^{1 / 2} \varphi \|^p .
    \end{array} \label{eq:ito-trick2}
  \end{equation}
  Let $\psi = (1 -\mathcal{L}_{\theta})^{- 1} \varphi$ and
  exploit~{\eqref{eq:ito-trick2}} to compute
  \[ \begin{array}{lll}
       \mathbb{E}_{\mu} \left[ \sup_{t \in [0, T]} \left| \int_0^t \varphi
       (\omega_s^m) \mathd s \right|^p \right] & = & \mathbb{E}_{\mu} \left[
       \sup_{t \in [0, T]} \left| \int_0^t (1 -\mathcal{L}_{\theta}) \psi
       (\omega_s^m) \mathd s \right|^p \right]\\
       & \lesssim & \mathbb{E}_{\mu} \left[ \sup_{t \in [0, T]} \left|
       \int_0^t (-\mathcal{L}_{\theta}) \psi (\omega_s^m) \mathd s \right|^p
       \right] +\mathbb{E}_{\mu} \left[ \sup_{t \in [0, T]} \left| \int_0^t
       \psi (\omega_s^m) \mathd s \right|^p \right]\\
       & \lesssim & T^{p / 2} \| c_p^{\mathcal{N}} (1
       -\mathcal{L}_{\theta})^{1 / 2} \psi \|^p + T^p \| c_p^{\mathcal{N}}
       \psi \|^p\\
       & \lesssim & (T^{p / 2} \vee T^p) (\| c_p^{\mathcal{N}} (1
       -\mathcal{L}_{\theta})^{- 1 / 2} \varphi \|^p + \| c_p^{\mathcal{N}} (1
       -\mathcal{L}_{\theta})^{- 1} \varphi \|^p)\\
       & \lesssim & (T^{p / 2} \vee T^p) \| c_p^{\mathcal{N}} (1
       -\mathcal{L}_{\theta})^{- 1 / 2} \varphi \|^p,
     \end{array} \]
  which is uniform in~$m$.
\end{proof}

\section{The cylinder martingale problem}\label{sec:cyl-mart}

We want now to take limits of Galerkin approximations and have a
characterisation of the limiting dynamics. The main problem is that the formal
limiting (martingale) generator $\mathcal{L}$ does not send cylinder functions
to $\mathcal{H}$, therefore we cannot properly formulate a martingale problem
for incompressible solutions. However, estimate~{\eqref{eq:ito-trick-m}}
suggests that it is reasonable to ask that any limit process $(\omega_t)_{t
\geqslant 0}$ satisfies
\begin{equation}
  \mathbb{E} \sup_{t \in [0, T]} \left| \int_0^t \varphi (\omega_s) \mathd s
  \right|^p \lesssim (T^{p / 2} \vee T^p) \| c_{2 p}^{\mathcal{N}} (1
  -\mathcal{L}_{\theta})^{- 1 / 2} \varphi \|^p, \label{eq:ito-trick}
\end{equation}
for all $p \geqslant 1$ and all cylinder functions $\varphi \in \mathcal{C}$.
The proof of the next lemma is almost immediate.

\begin{lemma}
  \label{lemma:integral}Assume that a process $(\omega_t)_t$
  satisfies~{\eqref{eq:ito-trick}} and let $I_t (\varphi) = \int_0^t \varphi
  (\omega_s) \mathd s$ for all $\varphi \in \mathcal{C}$. Then the map
  $\varphi \mapsto (I_t (\varphi))_{t \geqslant 0}$ can be extended to all
  $\varphi \in (1 -\mathcal{L}_{\theta})^{1 / 2} \mathcal{H}$. The process
  $(I_t (\varphi))_{t \geqslant 0}$ is almost surely continuous.
\end{lemma}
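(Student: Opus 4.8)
The plan is to prove this by a standard density/completion argument, using the estimate \eqref{eq:ito-trick} as a continuity bound for the linear map $\varphi \mapsto (I_t(\varphi))_{t\ge 0}$ with respect to an appropriate norm. First I would fix the Banach space in which the processes live: for a fixed $T>0$ and $p\ge 1$, let $\mathcal{X}_T^p$ denote the space $L^p(\Omega; C([0,T];\mathbb{R}))$, equipped with the norm $\|Y\|_{\mathcal{X}_T^p} = (\mathbb{E}\sup_{t\in[0,T]}|Y_t|^p)^{1/p}$, which is complete. The content of \eqref{eq:ito-trick} is precisely that, for $\varphi\in\mathcal{C}$,
\[
  \|I_\cdot(\varphi)\|_{\mathcal{X}_T^p} \lesssim (T^{1/2}\vee T)\,\|c_{2p}^{\mathcal{N}}(1-\mathcal{L}_\theta)^{-1/2}\varphi\|,
\]
so the map $\varphi \mapsto I_\cdot(\varphi)$ is bounded from $(\mathcal{C}, \|c_{2p}^{\mathcal{N}}(1-\mathcal{L}_\theta)^{-1/2}\cdot\|)$ into $\mathcal{X}_T^p$.

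Next I would identify the completion of $\mathcal{C}$ under the relevant norm. Since cylinder functions are dense in $\mathcal{H}$ (noted after Definition~\ref{def:cylfn}), and since $(1-\mathcal{L}_\theta)^{1/2}$ is a positive self-adjoint operator with dense range (it is diagonal in the chaos/Fourier basis by \eqref{eq:ltheta-fourier} with strictly positive eigenvalues), the set $(1-\mathcal{L}_\theta)^{1/2}\mathcal{C}$ is dense in $\mathcal{H}$; equivalently $\mathcal{C}$ is dense in $(1-\mathcal{L}_\theta)^{1/2}\mathcal{H}$ for the norm $\varphi\mapsto\|(1-\mathcal{L}_\theta)^{-1/2}\varphi\|$. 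The factor $c_{2p}^{\mathcal{N}}$ only makes the controlling norm larger (it is a bounded-below spectral multiplier on each chaos), but is harmless: for $\varphi\in(1-\mathcal{L}_\theta)^{1/2}\mathcal{H}$ the quantity $\|c_{2p}^{\mathcal{N}}(1-\mathcal{L}_\theta)^{-1/2}\varphi\|$ may be infinite, yet one can approximate $\varphi$ in $(1-\mathcal{L}_\theta)^{1/2}\mathcal{H}$ by cylinder functions $\varphi^{(j)}$ that are, in addition, finite linear combinations of Wiener chaoses of bounded order (first truncate the chaos expansion, then truncate Fourier modes, then mollify to land in $\mathcal{C}$); on such truncations $c_{2p}^{\mathcal{N}}$ acts as a bounded operator, so $\|I_\cdot(\varphi^{(j)}) - I_\cdot(\varphi^{(\ell)})\|_{\mathcal{X}_T^p} \lesssim_T \|c_{2p}^{\mathcal{N}}(1-\mathcal{L}_\theta)^{-1/2}(\varphi^{(j)}-\varphi^{(\ell)})\|$ and the right side tends to $0$ because on the (fixed, once $j,\ell$ are large) finite chaos levels it is comparable to $\|(1-\mathcal{L}_\theta)^{-1/2}(\varphi^{(j)}-\varphi^{(\ell)})\|$. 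Hence $(I_\cdot(\varphi^{(j)}))_j$ is Cauchy in $\mathcal{X}_T^p$ and converges to a limit, which I define to be $I_\cdot(\varphi)$; independence of the approximating sequence and linearity follow in the usual way from the bound. Running this for every $T$ and gluing the (consistent) definitions gives a process on $\mathbb{R}_+$, and taking $p$ large (the estimate holds for all $p\ge1$) plus Kolmogorov-type reasoning — or simply the fact that $\mathcal{X}_T^p$ consists of processes with continuous paths and $\mathcal{X}_T^p$-convergence gives a.s.-uniform convergence along a subsequence — yields that $(I_t(\varphi))_{t\ge0}$ has almost surely continuous trajectories.

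The main obstacle, and the only point requiring care, is the interplay between the number-operator weight $c_{2p}^{\mathcal{N}}$ and the completion: $c_{2p}^{\mathcal{N}}$ is unbounded on $\mathcal{H}$, so the controlling norm in \eqref{eq:ito-trick} is strictly stronger than $\|(1-\mathcal{L}_\theta)^{-1/2}\cdot\|$, and one must check that the target space $(1-\mathcal{L}_\theta)^{1/2}\mathcal{H}$ claimed in the statement is genuinely the right completion rather than some smaller weighted space. This is resolved exactly by the three-step approximation above (chaos truncation, then spectral truncation of $\mathcal{L}_\theta$, then mollification into $\mathcal{C}$): on the nested finite-dimensional pieces all operators involved are bounded, so the a priori bound transfers, and the limit object depends only on the $(1-\mathcal{L}_\theta)^{1/2}\mathcal{H}$-class of $\varphi$. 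Everything else — completeness of $\mathcal{X}_T^p$, linearity, the passage from $\mathcal{X}_T^p$-limits to a.s.-continuous modifications — is routine, which is why the authors call the proof ``almost immediate''.
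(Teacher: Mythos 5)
Your overall strategy (a continuity bound from \eqref{eq:ito-trick} plus density of $\mathcal{C}$ in $(1-\mathcal{L}_{\theta})^{1/2}\mathcal{H}$) is the same as the paper's, but the step you yourself flag as ``the only point requiring care'' contains a genuine gap. For $p>1$ the constant $c_{2p}=\sqrt{2p-1}$ is strictly larger than $1$, so on the span of chaoses of order at most $N$ the operator $c_{2p}^{\mathcal{N}}$ has norm $c_{2p}^{N}$, which diverges as $N\to\infty$. For a general $\varphi\in(1-\mathcal{L}_{\theta})^{1/2}\mathcal{H}$ with infinitely many nonvanishing chaos components your truncations $\varphi^{(j)}$ must involve chaos levels $N_j\to\infty$, so the inequality
\[ \| c_{2p}^{\mathcal{N}} (1-\mathcal{L}_{\theta})^{-1/2}(\varphi^{(j)}-\varphi^{(\ell)}) \| \leqslant c_{2p}^{N_j} \| (1-\mathcal{L}_{\theta})^{-1/2}(\varphi^{(j)}-\varphi^{(\ell)}) \| \]
does not give convergence to zero: the chaos levels are not ``fixed once $j,\ell$ are large'', and indeed for such $\varphi$ the quantity $\| c_{2p}^{\mathcal{N}}(1-\mathcal{L}_{\theta})^{-1/2}\varphi\|$, hence also $\mathbb{E}\sup_{t\leqslant T}|I_t(\varphi)|^p$, may be infinite, so there is no reason for the approximations to be Cauchy in $L^p(\Omega;C([0,T];\mathbb{R}))$ when $p>1$.

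The fix is much simpler than your three-step truncation and is what the paper implicitly uses: take $p=1$ in \eqref{eq:ito-trick}. Since $c_p=\sqrt{p-1}$ (Lemma~\ref{lemma:hyper-n}), one has $c_2=1$, so $c_2^{\mathcal{N}}$ is the identity and the bound reads $\mathbb{E}\sup_{t\leqslant T}|I_t(\varphi)|\lesssim (T^{1/2}\vee T)\,\|(1-\mathcal{L}_{\theta})^{-1/2}\varphi\|$, which is exactly the norm of $(1-\mathcal{L}_{\theta})^{1/2}\mathcal{H}$ and requires no truncation at all. Choosing cylinder functions $\varphi_n$ with $\sum_n\|(1-\mathcal{L}_{\theta})^{-1/2}(\varphi_n-\varphi)\|<\infty$ then makes $\sum_n\sup_{t\leqslant T}|I_t(\varphi_{n+1})-I_t(\varphi_n)|$ almost surely finite, so $(I(\varphi_n))_n$ is a.s.\ Cauchy in $C([0,T];\mathbb{R})$ and the limit is a.s.\ continuous; independence of the approximating sequence and linearity follow from the same $p=1$ bound. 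The higher values of $p$ in \eqref{eq:ito-trick} are needed elsewhere (tightness and modulus-of-continuity estimates), not for this lemma.
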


\begin{proof}
  Take $(\varphi_n)_n \subseteq \mathcal{C}$ such that $\sum_n \| (1
  -\mathcal{L}_{\theta})^{1 / 2} \varphi_n - \varphi \| < \infty$, then it is
  easy to see that $(I (\varphi_n))_n$ is a Cauchy sequence in $C ([0, T] ;
  \mathbb{R})$ a.s. with limit $I (\varphi) \in C ([0, T] ; \mathbb{R})$. It
  satisfies~{\eqref{eq:ito-trick}} by Fatou's lemma and, therefore, depends
  only on $\varphi$ and not on the particular approximating sequence.
\end{proof}

From this we deduce that for such processes we have
\[ \lim_{m \rightarrow \infty} \int_0^t (\mathcal{L}^m \varphi) (\omega_s)
   \mathd s = \int_0^t (\mathcal{L} \varphi) (\omega_s) \mathd s, \]
in probability and in $L^p$ for cylinder functions $\varphi \in \mathcal{C}$.
Here, on the right-hand side the quantity $\mathcal{L} \varphi$ is defined as
$\mathcal{L} \varphi =\mathcal{L}_{\theta} \varphi + \lim_{m \rightarrow
\infty} \mathcal{G}^m \varphi$, that is an element of the space of
distributions $(1 -\mathcal{L}_{\theta})^{1 / 2} \mathcal{H}$. The limit
exists and is unique thanks to the uniform estimates on $\mathcal{G}^m$ in
Lemma~\ref{lem:uniform-bds-G}. As a consequence, we have also a notion of
martingale problem w.r.t.~the operator $\mathcal{L}$ involving only cylinder
functions.

\begin{definition}
  \label{def:martpb-cyl}A process $(\omega_t)_{t \geqslant 0}$ with
  trajectories in $C (\mathbb{R}_+ ; \mathcal{S}')$ solves the {\tmem{cylinder
  martingale problem for $\mathcal{L}$ with initial distribution $\nu$}} if
  $\omega_0 \sim \nu$ and if the following conditions are satisfied:
  \begin{enumerateroman}
    \item $(\omega_t)_t$ is incompressible,
    
    \item the It{\^o} trick works: for all cylinder functions $\varphi$ and
    all $p \geqslant 1$, we have eq.~{\eqref{eq:ito-trick}}.
    
    \item for any $\varphi \in \mathcal{C}$, the process
    \begin{equation}
      M_t^{\varphi} = \varphi (\omega_t) - \varphi (\omega_0) - \int_0^t
      \mathcal{L} \varphi (\omega_s) \mathd s, \qquad t \geqslant 0,
      \label{eq:mart-prob-cyl}
    \end{equation}
    is a continuous martingale with quadratic variation $\langle M^{\varphi}
    \rangle_t = \int_0^t \mathcal{E} (\varphi) (\omega_s) \mathd s$. The
    integral on the right-hand side of eq.~{\eqref{eq:mart-prob-cyl}} is
    defined according to Lemma~\ref{lemma:integral}.
  \end{enumerateroman}
\end{definition}

\begin{theorem}
  \label{thm:existence-cyl-martingalepb}Let $\eta \in L^2 (\mu)$ and, for each
  $m \geqslant 1$, let $(\omega^m)$ be the solution
  to~{\eqref{eq:ns-vort-appr}} with $\omega^m_0 \sim \eta \mathd \mu$. Then
  the family $(\omega^m)_{m \in \mathbb{N}}$ is tight in $C (\mathbb{R}_+ ;
  \mathcal{S}')$ and any weak limit $\omega$ solves the cylinder martingale
  problem for $\mathcal{L}$ with initial distribution~$\eta \mathd \mu$
  according to Definition~\ref{def:martpb-cyl} and we have
  \begin{equation}
    \mathbb{E} [| \varphi (\omega_t) - \varphi (\omega_s) |^p] \lesssim (| t -
    s |^{p / 2} \vee | t - s |^p) \| c_{4 p}^{\mathcal{N}} (1
    -\mathcal{L}_{\theta})^{- 1 / 2} \varphi \|^p \label{eq:mod-cont}
  \end{equation}
  for any $p \geqslant 2$ and $\varphi \in \mathcal{C}$.
\end{theorem}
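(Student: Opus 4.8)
The plan is to obtain tightness of $(\omega^m)_m$ from a Kolmogorov-type estimate uniform in $m$, upgrade weak convergence to almost sure convergence by Skorokhod, and then pass to the limit in the three defining properties of the cylinder martingale problem of Definition~\ref{def:martpb-cyl} together with the increment bound~\eqref{eq:mod-cont}. \textbf{Step 1 (uniform modulus estimate for the approximations).} Fix $\varphi\in\mathcal{C}$ and let $\omega^m$ start from $\eta\,\mathd\mu$. Conditioning on $\mathcal{F}_s$, using the Markov property and the $L^2(\mu)$-contractivity of $T^m_t$ exactly as in the proof of Lemma~\ref{lem:est-eta}, one bounds $\mathbb{E}|\varphi(\omega^m_t)-\varphi(\omega^m_s)|^p$ by $\|\eta\|\,\bigl(\mathbb{E}_\mu|\varphi(\omega^m_{t-s})-\varphi(\omega^m_0)|^{2p}\bigr)^{1/2}$, which reduces the problem to the stationary process with $s=0$ and accounts for the passage from $c_{2p}$ to $c_{4p}$. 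For the stationary process one splits $\varphi(\omega^m_r)-\varphi(\omega^m_0)=M^{m,\varphi}_r+\int_0^r\mathcal{L}^m\varphi(\omega^m_u)\,\mathd u$ via~\eqref{eq:ito-cyl} and estimates the martingale term by Burkholder--Davis--Gundy, the quadratic-variation formula~\eqref{eq:mart-qvar}, stationarity, hypercontractivity (Lemma~\ref{lemma:hyper-n}) and the energy bound~\eqref{eq:energy-norm}; and the drift term by applying the It{\^o}-trick estimate~\eqref{eq:ito-trick-m} at exponent $2p$ to the cylinder function $\mathcal{L}^m\varphi=\mathcal{L}_\theta\varphi+\mathcal{G}^m\varphi$, whose relevant weighted norm is bounded uniformly in $m$ by combining the contractivity of negative powers of $1-\mathcal{L}_\theta$ with the $m$-uniform bounds~\eqref{eq:unifbdG1}--\eqref{eq:unifbdG2} of Lemma~\ref{lem:uniform-bds-G}, the polynomial-in-$\mathcal{N}$ factors there being absorbed into a slightly enlarged hypercontractivity base (and the power of $1-\mathcal{L}_\theta$ being the one dictated by~\eqref{eq:energy-norm} and~\eqref{eq:ito-trick-m}). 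This yields the bound~\eqref{eq:mod-cont} with $\omega^m$ in place of $\omega$, uniformly in $m$.

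\textbf{Step 2 (tightness and Skorokhod).} Taking $\varphi(\cdot)=(\cdot)(f)$ with $f\in\mathcal{S}(\mathbb{T}^2)$, Step~1 gives $\mathbb{E}|\omega^m_t(f)-\omega^m_s(f)|^p\lesssim|t-s|^{p/2}$ for $|t-s|\le1$, uniformly in $m$; since the law of $\omega^m_0$ is $\eta\,\mathd\mu$ independently of $m$, Kolmogorov's continuity criterion (with $p>2$) gives tightness of $(\omega^m_\cdot(f))_m$ in $C(\mathbb{R}_+;\mathbb{R})$ for each $f$, and Mitoma's criterion for the nuclear space $\mathcal{S}'(\mathbb{T}^2)$ then gives tightness of $(\omega^m)_m$ in $C(\mathbb{R}_+;\mathcal{S}')$. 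Along a convergent subsequence, Skorokhod's representation theorem lets us assume $\omega^{m_k}\to\omega$ almost surely in $C(\mathbb{R}_+;\mathcal{S}')$ on a common probability space; since every cylinder function is continuous on $\mathcal{S}'$, $\varphi(\omega^{m_k}_t)\to\varphi(\omega_t)$ a.s.\ for every $\varphi\in\mathcal{C}$ and $t$, and in fact $\omega\mapsto\int_0^\cdot\varphi(\omega_s)\,\mathd s$ is continuous on $C(\mathbb{R}_+;\mathcal{S}')$ for cylinder $\varphi$ (uniform convergence of the arguments of $\Phi$ on compacts). Incompressibility of $\omega$ follows from the uniform bound $\mathbb{E}|\varphi(\omega^m_t)|\le C(T)\|\varphi\|$ by Fatou; the It{\^o} trick~\eqref{eq:ito-trick} for $\omega$ follows from~\eqref{eq:ito-trick-m} by lower semicontinuity plus the uniform integrability provided by the same $L^p$-bounds (for $p$ bumped up), after which Lemma~\ref{lemma:integral} makes $\psi\mapsto\bigl(\int_0^\cdot\psi(\omega_s)\,\mathd s\bigr)$ well defined on $(1-\mathcal{L}_\theta)^{1/2}\mathcal{H}$; finally~\eqref{eq:mod-cont} for $\omega$ is obtained by passing to the limit in Step~1 with Fatou (or by repeating that computation with~\eqref{eq:ito-trick} and~\eqref{eq:mart-qvar} in place of~\eqref{eq:ito-trick-m} and~\eqref{eq:ito-cyl}).

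\textbf{Step 3 (martingale property).} Write $M^{m_k,\varphi}_t=\varphi(\omega^{m_k}_t)-\varphi(\omega^{m_k}_0)-\int_0^t\mathcal{L}^{m_k}\varphi(\omega^{m_k}_s)\,\mathd s$. The endpoint terms converge a.s.; for the drift integral one uses the splitting
\[\int_0^t\mathcal{L}^{m_k}\varphi(\omega^{m_k}_s)\,\mathd s-\int_0^t\mathcal{L}\varphi(\omega_s)\,\mathd s=\int_0^t(\mathcal{G}^{m_k}-\mathcal{G})\varphi(\omega^{m_k}_s)\,\mathd s+\bigl(\int_0^t\mathcal{L}\varphi(\omega^{m_k}_s)\,\mathd s-\int_0^t\mathcal{L}\varphi(\omega_s)\,\mathd s\bigr),\]
where the first term tends to $0$ in $L^p$ because $\|c_{2p}^{\mathcal{N}}(1-\mathcal{L}_\theta)^{-1/2}(\mathcal{G}^{m_k}-\mathcal{G})\varphi\|\to0$ (dominated convergence in the weighted $\ell^2$-norm, using Lemma~\ref{lem:uniform-bds-G}) combined with~\eqref{eq:ito-trick-m}, and the second term tends to $0$ a.s.\ and in $L^p$ after approximating $\mathcal{L}\varphi\in(1-\mathcal{L}_\theta)^{1/2}\mathcal{H}$ by cylinder functions, invoking the continuity of the integral functional and~\eqref{eq:ito-trick-m} in a diagonal argument. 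Hence $M^{m_k,\varphi}_t\to M^\varphi_t$ in $L^1$, and letting $k\to\infty$ in $\mathbb{E}[(M^{m_k,\varphi}_t-M^{m_k,\varphi}_s)\,G]=0$ for bounded continuous cylindrical functionals $G$ of the path $(\omega_r)_{r\le s}$ shows that $M^\varphi$ is a martingale; the same reasoning applied to $(M^{m_k,\varphi}_t)^2-\int_0^t\mathcal{E}(\varphi)(\omega^{m_k}_s)\,\mathd s$ (with $p$ large enough that the squares are uniformly integrable) identifies $\langle M^\varphi\rangle_t=\int_0^t\mathcal{E}(\varphi)(\omega_s)\,\mathd s$, completing the verification of Definition~\ref{def:martpb-cyl} and of~\eqref{eq:mod-cont}.

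\textbf{Main obstacle.} The delicate point is the convergence $\int_0^t\mathcal{L}^{m_k}\varphi(\omega^{m_k}_s)\,\mathd s\to\int_0^t\mathcal{L}\varphi(\omega_s)\,\mathd s$: since $\mathcal{L}\varphi$ is a genuine distribution — it lies in $(1-\mathcal{L}_\theta)^{1/2}\mathcal{H}$, not in $\mathcal{H}$ — ``$\mathcal{L}\varphi(\omega_s)$'' is not a pointwise evaluation but the stochastic functional of Lemma~\ref{lemma:integral}, so one must let $m_k\to\infty$ and regularise the singular drift simultaneously, controlling both errors uniformly in $m$; this is exactly where the $m$-uniform bounds of Lemma~\ref{lem:uniform-bds-G} are indispensable. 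A secondary, routine difficulty is carrying enough integrability (arbitrary $p$) throughout, so as to guarantee uniform integrability of the approximating martingales and of their squares — which is the reason~\eqref{eq:mod-cont} and~\eqref{eq:ito-trick-m} are stated for all $p$.
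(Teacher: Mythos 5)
Your proposal is correct and follows essentially the same route as the paper: the uniform increment bound via the splitting into drift (handled by Lemma~\ref{lem:est-eta} and the It\^o-trick estimate~\eqref{eq:ito-trick-m} at doubled exponent, whence $c_{4p}$) plus martingale part (Burkholder--Davis--Gundy and~\eqref{eq:energy-norm}), then Kolmogorov plus Mitoma for tightness, and finally the passage to the limit in the martingale identity by approximating $\mathcal{L}\varphi$ with cylinder functions and using the $m$-uniform bounds of Lemma~\ref{lem:uniform-bds-G} to control $\mathcal{G}^m\varphi-\mathcal{G}\varphi$. The only cosmetic deviations are your use of the Skorokhod representation (the paper instead tests directly against bounded continuous functionals of the past under convergence in law) and the Markov-conditioning phrasing of the reduction to the stationary case, neither of which changes the substance.
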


\begin{proof}
  The proof follows the one for Theorem~4.6
  in~{\cite{gubinelli2018infinitesimal}}.
  
  \
  
  \tmtextbf{Step 1.} Consider $p \geqslant 2$ and $\varphi \in \mathcal{C}$.
  We want to derive an estimate for $\mathbb{E} [| \varphi (\omega^m_t) -
  \varphi (\omega_s^m) |^p]$. We write then $\varphi (\omega^m_t) - \varphi
  (\omega_s^m) = \int_s^t \mathcal{L}^m \varphi (\omega^m_r) \mathd r +
  M_t^{m, \varphi} - M_s^{m, \varphi}$, and get from Lemma~\ref{lem:est-eta}
  and eq.~{\eqref{eq:ito-trick-m}} the following bound
  \[ \begin{array}{lll}
       \mathbb{E} \left[ \left| \int_s^t \mathcal{L}^m \varphi (\omega^m_r)
       \mathd r \right|^p \right] & \lesssim & \left[ \mathbb{E}_{\mu} \left|
       \int_s^t \mathcal{L}^m \varphi (\omega_r^m) \mathd r \right|^{2 p}
       \right]^{1 / 2}\\
       & \lesssim & (| t - s |^{p / 2} \vee | t - s |^p) \| c_{4
       p}^{\mathcal{N}} (1 -\mathcal{L}_{\theta})^{- 1 / 2} \varphi \|^p .
     \end{array} \]
  The martingale term can be bounded by means of the Burkholder-Davis-Gundy
  inequality and~{\eqref{eq:energy-norm}} as follows:
  \[ \begin{array}{lll}
       \mathbb{E} [| M_t^{m, \varphi} - M_s^{m, \varphi} |^p] & \lesssim &
       \mathbb{E} \left[ \left( \int_s^t \mathcal{E} (\varphi) (\omega^m_r)
       \mathd r \right)^{p / 2} \right] \lesssim \left[ \mathbb{E}_{\mu}
       \left( \int_s^t \mathcal{E} (\varphi) (\omega_r^m) \mathd r \right)^p
       \right]^{1 / 2}\\
       & \lesssim & | t - s |^{p / 2} \| (\mathcal{E} (\varphi))^{p / 2} \|
       \lesssim | t - s |^{p / 2} \| c_{2 p}^{\mathcal{N}} (\mathcal{E}
       (\varphi))^{1 / 2} \|^p\\
       & \lesssim & | t - s |^{p / 2} \| c_{2 p}^{\mathcal{N}} (1
       -\mathcal{L}_{\theta})^{1 / 2} \varphi \|^p .
     \end{array} \]
  Therefore,
  \begin{equation}
    \mathbb{E} [| \varphi (\omega_t^m) - \varphi (\omega_s^m) |^p] \lesssim (|
    t - s |^{p / 2} \vee | t - s |^p) \| c_{4 p}^{\mathcal{N}} (1
    -\mathcal{L}_{\theta})^{- 1 / 2} \varphi \|^p . \label{eq:bd-tightness}
  \end{equation}
  The law of the initial condition $\varphi (\omega_0^m)$ is independent
  of~$m$, and by Kolmogorov's continuity criterion the sequence of real-valued
  processes $(\varphi (\omega^m))_m$ is tight in $C (\mathbb{R}_+ ;
  \mathbb{R})$ whenever $p \geqslant 4$ and $\varphi \in \mathcal{C}$ is such
  that $\| c_{4 p}^{\mathcal{N}} (1 -\mathcal{L}_{\theta})^{- 1 / 2} \varphi
  \| < \infty$. Note that this space contains in particular all the functions
  of the form $\varphi (\omega) = \omega (f)$ with $f \in C^{\infty}
  (\mathbb{T}^2)$. Hence, we can apply Mitoma's criterion~{\cite{Mitoma1983}}
  to get the tightness of the sequence $(\omega^m)_m$ in $C (\mathbb{R}_+ ;
  \mathcal{S}')$.
  
  \
  
  \tmtextbf{Step 2.} Since $\omega_0^m \sim \eta \mathd \mu$, any weak limit
  has initial distribution~$\eta \mathd \mu$. Incompressibility is also clear
  since, for any $\varphi \in \mathcal{H}$, we have
  \[ \mathbb{E} [| \varphi (\omega_t) |] \leqslant \underset{m \rightarrow
     \infty}{\lim \inf} \mathbb{E} [| \varphi (\omega_t^m) |] \leqslant \|
     \eta \| \| \varphi \| . \]
  Using cylinder functions, we can pass to the limit in
  eq.~{\eqref{eq:ito-trick-m}} and prove that any accumulation point
  $(\omega_t)_t$ satisfies eq.~{\eqref{eq:ito-trick}}. It remains to check the
  martingale characterisation~{\eqref{eq:mart-prob-cyl}}. Fix $\varphi \in
  \mathcal{C}$ and let $(\psi_n)_n \subseteq \mathcal{C}$ be such that $\psi_n
  \rightarrow \mathcal{L} \varphi$ in $(1 +\mathcal{L}_{\theta})^{1 / 2}
  \mathcal{H}$. By convergence in law, incompressibility,
  eq.~{\eqref{eq:ito-trick-m}} and eq.~{\eqref{eq:ito-trick}}, we have that
  \[ \begin{array}{ll}
       & \mathbb{E} \left[ \left( \varphi (\omega_t) - \varphi (\omega_s) -
       \int_s^t \mathcal{L} \varphi (\omega_r) \mathd r \right) G
       ((\omega_r)_{r \in [0, s]}) \right]\\
       = & \lim_{n \rightarrow \infty} \mathbb{E} \left[ \left( \varphi
       (\omega_t) - \varphi (\omega_s) - \int_s^t \psi_n (\omega_r) \mathd r
       \right) G ((\omega_r)_{r \in [0, s]}) \right]\\
       = & \lim_{n \rightarrow \infty} \lim_{m \rightarrow \infty} \mathbb{E}
       \left[ \left( \varphi (\omega_t^m) - \varphi (\omega_s^m) - \int_s^t
       \psi_n (\omega_r^m) \mathd r \right) G ((\omega_r^m)_{r \in [0, s]})
       \right]\\
       = & \lim_{m \rightarrow \infty} \mathbb{E} \left[ \left( \varphi
       (\omega_t^m) - \varphi (\omega_s^m) - \int_s^t \mathcal{L} \varphi
       (\omega_r^m) \mathd r \right) G ((\omega_r^m)_{r \in [0, s]}) \right],
     \end{array} \]
  where the exchange of limits in the last line is justified by the uniformity
  in $m$ of the bound in eq.~{\eqref{eq:ito-trick-m}}. By dominated
  convergence in the estimates leading to Lemma~\ref{lemma:galerkin-estimates}
  one has
  \[ \| (1 -\mathcal{L}_{\theta})^{- 1 / 2} (\mathcal{L} \varphi
     -\mathcal{L}^m \varphi) \| = \| (1 -\mathcal{L}_{\theta})^{- 1 / 2}
     (\mathcal{G} \varphi -\mathcal{G}^m \varphi) \| \lesssim o (1) \| (1
     +\mathcal{N})^{3 / 2} (1 -\mathcal{L}_{\theta})^{1 / 2} \varphi \| \]
  as $m \rightarrow \infty$. This is enough to conclude (again using
  eq.~{\eqref{eq:ito-trick-m}}) that
  \begin{equation}
    \begin{array}{ll}
      & \lim_{m \rightarrow \infty} \mathbb{E} \left[ \left( \varphi
      (\omega_t^m) - \varphi (\omega_s^m) - \int_s^t \mathcal{L} \varphi
      (\omega_r^m) \mathd r \right) G ((\omega_r^m)_{r \in [0, s]}) \right]\\
      = & \lim_{m \rightarrow \infty} \mathbb{E} \left[ \left( \varphi
      (\omega_t^m) - \varphi (\omega_s^m) - \int_s^t \mathcal{L}^m \varphi
      (\omega_r^m) \mathd r \right) G ((\omega_r^m)_{r \in [0, s]}) \right] =
      0,
    \end{array} \label{eq:limit-existence}
  \end{equation}
  since $(\omega^m_t)_t$ solves indeed the martingale problem
  for~$\mathcal{L}^m$. This establishes that any accumulation point
  $(\omega_t)_t$ is a solution to the cylinder martingale problem
  for~$\mathcal{L}$. Similarly, one can pass to the limit on the martingales
  $(M_t^{m, \varphi})_t$ to show that the limiting quadratic variation is as
  claimed.
\end{proof}

\section{Uniqueness of solutions}\label{sec:uniq}

Uniqueness of solutions to the cylinder martingale problem depends on the
control of the associated Kolmogorov equation.

\

The following standard fact on generators of semigroups that will be useful
in our further considerations. For the sake of the reader we provide also a
proof to illustrate the relation between the Kolmogorov equation for a
concrete operator and abstract semigroup theory.

\begin{lemma}
  \label{lemma:closure}Let $\mathcal{A}$ be a densely defined, dissipative
  operator on $\mathcal{H}$ and assume that \ we can solve the Kolmogorov
  equation $\partial_t \varphi (t) =\mathcal{A} \varphi (t)$ in $C
  (\mathbb{R}_+ ; \mathcal{D} (\mathcal{A})) \cap C^1 (\mathbb{R}_+ ;
  \mathcal{H})$ with initial condition $\varphi (0) = \varphi_0$ in a dense
  set $\mathcal{U}_{\mathcal{A}} \subseteq \mathcal{D} (\mathcal{A})$. Then
  $\mathcal{A}$ is closable and its closure $\mathcal{B}$ is the unique
  extension of $\mathcal{A}$ which generates a strongly continuous semigroup
  of contractions $(T_t)_{t \geqslant 0}$. Moreover, we have
  \begin{equation}
    \mathcal{A}T_t \varphi_0 = T_t \mathcal{A} \varphi_0, \label{eq:kolm-aa}
  \end{equation}
  for all $\varphi_0 \in \mathcal{U}_{\mathcal{A}}$.
\end{lemma}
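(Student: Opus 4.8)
The plan is to invoke the Lumer--Phillips theorem, using the hypothesis on the Kolmogorov equation to verify the range condition. First I would recall that a densely defined dissipative operator $\mathcal{A}$ is closable (dissipativity passes to the closure because $\|(\lambda -\mathcal{A})\varphi\| \geqslant \lambda \|\varphi\|$ for $\lambda > 0$ already on $\mathcal{D}(\mathcal{A})$, and this inequality survives taking limits), so its closure $\mathcal{B}$ is a well-defined dissipative operator. By the Lumer--Phillips theorem, $\mathcal{B}$ generates a strongly continuous contraction semigroup if and only if $\tmop{Range}(\lambda -\mathcal{B})$ is dense in $\mathcal{H}$ for some (equivalently all) $\lambda > 0$; uniqueness of such a generating extension is also part of the standard theory, since any two contraction-semigroup generators extending $\mathcal{A}$ must agree with the closure of $\mathcal{A}$ by a resolvent argument.

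Next I would use the solvability of the Kolmogorov equation to produce a large range. Given $\varphi_0 \in \mathcal{U}_{\mathcal{A}}$, let $t \mapsto \varphi(t)$ be the solution in $C(\mathbb{R}_+;\mathcal{D}(\mathcal{A})) \cap C^1(\mathbb{R}_+;\mathcal{H})$ with $\varphi(0) = \varphi_0$, and define for $\lambda > 0$ the Laplace transform $R_\lambda \varphi_0 \assign \int_0^\infty e^{-\lambda t} \varphi(t)\,\mathd t$, which converges in $\mathcal{H}$ because $\|\varphi(t)\| \leqslant \|\varphi_0\|$ (this a priori bound follows from dissipativity: $\tfrac{\mathd}{\mathd t}\|\varphi(t)\|^2 = 2\langle \varphi(t), \mathcal{A}\varphi(t)\rangle \leqslant 0$). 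Integrating by parts and using $\partial_t \varphi(t) = \mathcal{A}\varphi(t)$ together with closedness of $\mathcal{B}$ (to move $\mathcal{B}$ through the integral), one gets $R_\lambda \varphi_0 \in \mathcal{D}(\mathcal{B})$ and $(\lambda - \mathcal{B}) R_\lambda \varphi_0 = \varphi_0$. Hence $\mathcal{U}_{\mathcal{A}} \subseteq \tmop{Range}(\lambda - \mathcal{B})$, and since $\mathcal{U}_{\mathcal{A}}$ is dense, the range condition of Lumer--Phillips holds. Therefore $\mathcal{B}$ generates a strongly continuous contraction semigroup $(T_t)_{t\geqslant 0}$, and it is the unique extension of $\mathcal{A}$ doing so.

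Finally, for the commutation identity \eqref{eq:kolm-aa}: given $\varphi_0 \in \mathcal{U}_{\mathcal{A}}$, I would argue that the semigroup orbit $t \mapsto T_t \varphi_0$ coincides with the Kolmogorov solution $\varphi(t)$. Indeed, $\varphi(t)$ is a $C^1$ solution of $\partial_t \varphi = \mathcal{B}\varphi$ with $\varphi(0)=\varphi_0$, and such solutions are unique (the difference $\delta(t)$ of two solutions satisfies $\tfrac{\mathd}{\mathd t}\|\delta(t)\|^2 \leqslant 0$ with $\delta(0)=0$), so $\varphi(t) = T_t\varphi_0$. Since $\varphi(t) \in \mathcal{D}(\mathcal{A})$ for all $t$ and $\mathcal{A}\varphi(t) = \mathcal{B}\varphi(t) = \partial_t T_t\varphi_0 = T_t \mathcal{B}\varphi_0 = T_t\mathcal{A}\varphi_0$ (using that $\varphi_0 \in \mathcal{U}_{\mathcal{A}} \subseteq \mathcal{D}(\mathcal{A})$ and the standard semigroup identity $\partial_t T_t \psi = T_t \mathcal{B}\psi$ for $\psi \in \mathcal{D}(\mathcal{B})$), the identity follows.

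The main obstacle is the careful handling of the Laplace-transform argument: justifying that $R_\lambda\varphi_0$ lands in $\mathcal{D}(\mathcal{B})$ and that $\mathcal{B}$ commutes with the (improper) Bochner integral requires the a priori contraction estimate on $\varphi(t)$ and the closedness of $\mathcal{B}$, plus a truncation/limiting argument on $\int_0^N$; everything else is bookkeeping around Lumer--Phillips and a Gronwall-type uniqueness estimate.
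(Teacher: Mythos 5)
Your argument is correct, but it takes a genuinely different route from the paper's. The paper builds the semigroup directly out of the Kolmogorov flow: it sets $T_t \varphi_0 \assign \varphi (t)$ on $\mathcal{U}_{\mathcal{A}}$, uses dissipativity for the contraction bound and for uniqueness (hence the semigroup property), extends by density, and only afterwards identifies the Hille--Yosida generator of this concrete semigroup as a closed extension of $\mathcal{A}$, which yields closability; uniqueness of the generating extension is again reduced to uniqueness of Kolmogorov solutions for the competing (dissipative) generator. You instead invoke Lumer--Phillips, verifying the range condition by Laplace-transforming the Kolmogorov solution, and settle uniqueness by the resolvent argument (an injective extension of a surjective operator must coincide with it). Your route buys a cleaner identification of the generator with the closure of $\mathcal{A}$ --- the paper's step extending $\mathcal{B} \varphi_0 =\mathcal{A} \varphi_0$ from $\mathcal{U}_{\mathcal{A}}$ to all of $\mathcal{D} (\mathcal{A})$ ``since $\mathcal{B}$ is closed'' implicitly treats $\mathcal{U}_{\mathcal{A}}$ as a core --- at the cost of the Hille/Bochner-integral bookkeeping, which your truncation handles correctly: $(\lambda -\mathcal{B}) \int_0^N e^{- \lambda t} \varphi (t) \, \mathrm{d} t = \varphi_0 - e^{- \lambda N} \varphi (N)$ already exhibits $\varphi_0$ as a limit of elements of the range, which is all that Lumer--Phillips requires. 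The only imprecision is cosmetic: your parenthetical justifies that the closure is dissipative rather than that $\mathcal{A}$ is closable; closability of a densely defined dissipative operator is the standard fact (Engel--Nagel, Prop.~II.3.14), and your range argument would deliver it in any case. The final commutation step is essentially the paper's.
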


\begin{proof}
  Since $\mathcal{A}$ is dissipative, the solution to the Kolmogorov equation
  is unique and $\| \varphi (t) \| \leqslant \| \varphi_0 \|$. Then, if we let
  $T_t \varphi_0 = \varphi (t)$ for $\varphi_0 \in \mathcal{U}_{\mathcal{A}}$
  we can extend $T_t$ by continuity to the whole space $\mathcal{H}$ as a
  contraction. By uniqueness, we have then $T_{t + s} \varphi_0 = T_t T_s
  \varphi_0$, since $t \mapsto T_{t + s} \varphi_0$ solves the equation with
  initial condition $T_s \varphi_0$. Moreover, for $\varphi_0 \in
  \mathcal{U}_{\mathcal{A}}$, we have that
  \begin{equation}
    T_t \varphi_0 - \varphi_0 = \int_0^t \mathcal{A}T_s \varphi_0 \mathd s,
    \label{eq:semiga}
  \end{equation}
  which implies that $t \mapsto T_t \varphi_0$ is strongly continuous. Again
  by density, we deduce that $(T_t)_{t \geqslant 0}$ is a strongly continuous
  semigroup. Let now $\mathcal{B}$ be its Hille--Yosida generator.
  Then~{\eqref{eq:semiga}} implies that $\mathcal{B} \varphi_0 = \partial_t
  T_t \varphi_0 |_{t = 0} =\mathcal{A} \varphi_0$ for all $\varphi_0 \in
  \mathcal{U}_{\mathcal{A}}$, and therefore for all $\varphi_0 \in \mathcal{D}
  (\mathcal{A})$ since $\mathcal{B}$ is closed. So $\mathcal{B}$ is an
  extension of $\mathcal{A}$ and therefore $\mathcal{A}$ is closable. Assume
  now that there exists another extension $\tilde{\mathcal{B}}$ which is the
  generator of another strongly continuous semigroup $(S_t)_{t \geqslant 0}$
  of contractions. \ Now, for all $\varphi_0 \in \mathcal{U}_{\mathcal{A}}
  \subseteq \mathcal{D} (\mathcal{A}) \subseteq \mathcal{D}
  (\tilde{\mathcal{B}})$ we have $\partial_t S_t \varphi_0 =
  \tilde{\mathcal{B}} S_t \varphi_0$, but also $\partial_t T_t \varphi_0
  =\mathcal{A}T_t \varphi_0 = \tilde{\mathcal{B}} T_t \varphi_0$. Since
  $\tilde{\mathcal{B}}$ is dissipative (due to the fact that its semigroup is
  contractive), the associated Kolmogorov equation must have a unique solution
  and, as a consequence, $T_t \varphi_0 = S_t \varphi_0$, which by density
  implies that $T = S$ and that $\mathcal{B}= \tilde{\mathcal{B}}$. Now
  observe that, if $\varphi_0 \in \mathcal{U}_{\mathcal{A}}$, then $T_t
  \varphi_0 \in \mathcal{D} (\mathcal{A})$ and by standard results on
  contraction semigroups (see e.g.~Proposition~1.1.5
  in~{\cite{ethier_markov_2005}}) we have $\mathcal{A}T_t \varphi_0
  =\mathcal{B}T_t \varphi_0 = T_t \mathcal{B} \varphi_0 = T_t \mathcal{A}
  \varphi_0$.
\end{proof}

Theorem~\ref{thm:exist-kolmogorov} below tells us that we can find a dense
domain $\mathcal{D} (\mathcal{L}) \subseteq \mathcal{H}$ for $\mathcal{L}$
such that the Kolmogorov equation
\begin{equation}
  \partial_t \varphi (t) =\mathcal{L} \varphi (t), \qquad t \geqslant 0,
  \label{eq:kolm}
\end{equation}
has a unique solution in $C (\mathbb{R}_+ ; \mathcal{D} (\mathcal{L})) \cap
C^1 (\mathbb{R}_+ ; \mathcal{H})$ for any initial condition in a dense set
$\mathcal{U} \subseteq \mathcal{H}$. As a first consequence,
Lemma~\ref{lemma:closure} tells us that $\mathcal{L}$ is closable and its
closure $\mathcal{L}^{\natural}$ is the generator of a strongly continuous
semigroup $(T_t)_{t \geqslant 0}$ and $\varphi (t) = T_t \varphi$ for all
$\varphi \in \mathcal{U}$.

\begin{lemma}
  \label{lem:ismart}Let $\varphi \in C (\mathbb{R}_+ ; \mathcal{D}
  (\mathcal{L})) \cap C^1 (\mathbb{R}_+ ; \mathcal{H})$ and let $\omega$ be a
  solution to the cylinder martingale problem for~$\mathcal{L}$. Then
  \[ \varphi (t, \omega_t) - \varphi (0, \omega_0) - \int_0^t (\partial_s
     +\mathcal{L}) \varphi (s, \omega_s) \mathd s, \qquad t \geqslant 0, \]
  is a martingale.
\end{lemma}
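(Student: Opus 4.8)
The strategy is the standard time-dependent It\^o formula combined with the cylinder martingale characterisation, but carried out through an approximation by cylinder functions since $\varphi(t,\cdot)$ need not be a cylinder function for fixed $t$. First I would reduce to the case where $\varphi(t) = \psi$ is \emph{independent of time} and $\psi\in\mathcal{D}(\mathcal{L})$: indeed, writing $\varphi(t,\omega_t)-\varphi(0,\omega_0)$ as a telescoping sum over a partition $0=t_0<t_1<\cdots<t_N=t$, the increment $\varphi(t_{j+1},\omega_{t_{j+1}})-\varphi(t_j,\omega_{t_j})$ splits into $[\varphi(t_{j+1},\omega_{t_{j+1}})-\varphi(t_j,\omega_{t_{j+1}})]$, which is handled by the fundamental theorem of calculus using $\varphi\in C^1(\mathbb{R}_+;\mathcal{H})$ and incompressibility (so that $\omega\mapsto\partial_s\varphi(s,\omega)$ makes sense as an $L^1$ process via Lemma~\ref{lemma:integral}), plus the "frozen-time" increment $[\varphi(t_j,\omega_{t_{j+1}})-\varphi(t_j,\omega_{t_j})]$. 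Passing $N\to\infty$, the first family of terms converges to $\int_0^t \partial_s\varphi(s,\omega_s)\,\mathrm{d}s$, so it suffices to show that for fixed $\psi\in\mathcal{D}(\mathcal{L})$ the process $\psi(\omega_t)-\psi(\omega_0)-\int_0^t\mathcal{L}\psi(\omega_s)\,\mathrm{d}s$ is a martingale.

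For fixed $\psi\in\mathcal{D}(\mathcal{L})=(1+\mathcal{N})^{-1}(1-\mathcal{L}_\theta)^{-1}\mathcal{H}$, I would pick a sequence of cylinder functions $\psi_k\to\psi$ approximating $\psi$ strongly enough that simultaneously $\psi_k\to\psi$ in $\mathcal{H}$, $\mathcal{L}_\theta\psi_k\to\mathcal{L}_\theta\psi$ with $(1-\mathcal{L}_\theta)^{-1/2}$-control, and $\mathcal{G}^m\psi_k$ is controlled uniformly (this uses density of $\mathcal{C}$ in the graph-type norm $\|(1+\mathcal{N})(1-\mathcal{L}_\theta)\cdot\|$, which holds since cylinder functions are dense in $\mathcal{H}$ and the relevant operators are spectral multipliers in the Fock representation). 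For each $\psi_k$, Definition~\ref{def:martpb-cyl}(iii) says $M^{\psi_k}_t=\psi_k(\omega_t)-\psi_k(\omega_0)-\int_0^t\mathcal{L}\psi_k(\omega_s)\,\mathrm{d}s$ is a martingale. Now I pass to the limit $k\to\infty$: incompressibility gives $\mathbb{E}|\psi_k(\omega_t)-\psi(\omega_t)|\lesssim\|\psi_k-\psi\|\to0$, and the It\^o trick \eqref{eq:ito-trick} together with Lemma~\ref{lemma:integral} gives $\mathbb{E}\sup_{t\le T}|\int_0^t(\mathcal{L}\psi_k-\mathcal{L}\psi)(\omega_s)\,\mathrm{d}s|\lesssim(T^{1/2}\vee T)\|c_2^{\mathcal{N}}(1-\mathcal{L}_\theta)^{-1/2}(\mathcal{L}\psi_k-\mathcal{L}\psi)\|$, which tends to zero by the choice of approximating sequence (here the uniform bounds of Lemma~\ref{lem:uniform-bds-G} on $\mathcal{G}^m$, hence on $\mathcal{G}=\lim_m\mathcal{G}^m$, are what make $(1-\mathcal{L}_\theta)^{-1/2}\mathcal{G}\psi_k$ converge). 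Thus $M^{\psi_k}_t\to M^{\psi}_t$ in $L^1$ uniformly on $[0,T]$, and an $L^1$ limit of martingales is a martingale.

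The main obstacle is the very first reduction step: justifying the passage to the limit in the telescoping sum, i.e.\ proving that the frozen-time increments $\sum_j[\varphi(t_j,\omega_{t_{j+1}})-\varphi(t_j,\omega_{t_j})]$ converge (say in $L^1$) to $\int_0^t\mathcal{L}\varphi(s,\omega_s)\,\mathrm{d}s$. For this one wants, for each $j$, that $\varphi(t_j,\omega_{t_{j+1}})-\varphi(t_j,\omega_{t_j})-\int_{t_j}^{t_{j+1}}\mathcal{L}\varphi(t_j,\omega_s)\,\mathrm{d}s$ is a martingale increment (which is exactly the fixed-time statement already proved) and then to control the error $\int_{t_j}^{t_{j+1}}[\mathcal{L}\varphi(t_j,\omega_s)-\mathcal{L}\varphi(s,\omega_s)]\,\mathrm{d}s$ summed over $j$; uniform continuity of $s\mapsto\varphi(s)$ into $\mathcal{D}(\mathcal{L})$ on the compact $[0,t]$, combined with the It\^o-trick bound \eqref{eq:ito-trick} applied to the differences, makes this error vanish as the mesh $\to0$. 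Modulo these estimates, which are routine given the quantitative bounds already available, the martingale property follows by taking conditional expectations and using that each term in the (now convergent) decomposition is a genuine martingale.
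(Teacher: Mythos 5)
Your strategy coincides with the paper's own (very terse) proof, which consists of exactly your two steps: an approximation argument extending the cylinder martingale property from $\mathcal{C}$ to $\mathcal{D}(\mathcal{L})$, followed by the Riemann-sum/telescoping extension to time-dependent functions (the paper delegates the latter to Lemma~A.3 of~{\cite{gubinelli2018infinitesimal}}). One correction is needed in your write-up of the first step: you identify $\mathcal{D}(\mathcal{L})$ with $(1+\mathcal{N})^{-1}(1-\mathcal{L}_{\theta})^{-1}\mathcal{H}$, but that is the domain $\mathcal{D}(\mathcal{L}^m)$ of the \emph{Galerkin} generators. The domain of the limiting operator is the controlled-structure domain of Lemma~\ref{lem:density}: its elements $\varphi=\mathcal{K}\varphi^{\sharp}$ are in general only of regularity $(1+\mathcal{N})^{-\alpha(0)}(1-\mathcal{L}_{\theta})^{-1/2}\mathcal{H}$ (it is $\varphi^{\sharp}$, not $\varphi$, that gains a full power of $(1-\mathcal{L}_{\theta})^{-1}$), so your appeal to density of $\mathcal{C}$ in the graph norm $\|(1+\mathcal{N})(1-\mathcal{L}_{\theta})\cdot\|$ is not the right justification --- that norm is generically infinite on $\mathcal{D}(\mathcal{L})$, which is precisely why the paper has to build the domain out of controlled ansatz functions in the first place. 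The approximation nevertheless goes through: since $\alpha(0)>3/2$, one may take cylinder functions $\psi_k\to\psi$ in $(1+\mathcal{N})^{-3/2}(1-\mathcal{L}_{\theta})^{-1/2}\mathcal{H}$, and the uniform bounds~{\eqref{eq:unifbdG1}}--{\eqref{eq:unifbdG2}} (with $\gamma$ equal to, respectively slightly below, $1/2$) then yield $\|(1-\mathcal{L}_{\theta})^{-1/2}(\mathcal{L}\psi_k-\mathcal{L}\psi)\|\to 0$, which is all that the It\^o-trick estimate~{\eqref{eq:ito-trick}} requires. With that repair, the remainder of your argument --- the $L^1$ limit of martingales, and the frozen-time Riemann sums controlled by the uniform continuity of $t\mapsto\varphi(t)$ into $\mathcal{D}(\mathcal{L})$ together with $\varphi\in C^1(\mathbb{R}_+;\mathcal{H})$ --- is sound and matches the intended proof.
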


\begin{proof}
  By an approximation argument, it is easy to see that for any $\varphi \in
  \mathcal{D} (\mathcal{L})$ the process
  \[ \varphi (\omega_t) - \varphi (\omega_0) - \int_0^t \mathcal{L} \varphi
     (\omega_s) \mathd s, \qquad t \geqslant 0, \]
  is a martingale, where the integral on the right-hand side is now understood
  as a standard Lebesgue integral of the continuous process $s \mapsto
  (\mathcal{L} \varphi) (\omega_s)$ (which is well defined a.s.). The proof of
  the extension to time-dependent functions follows the same lines as that of
  Lemma~A.3 in~{\cite{gubinelli2018infinitesimal}}.
\end{proof}

For an incompressible process we have that, for all $t \geqslant 0$,
\[ \int_0^s (\partial_r +\mathcal{L}) T_{t - r} \varphi (\omega_r) \mathd r =
   0, \qquad s \in [0, t] \]
for all $\varphi \in \mathcal{D} (\mathcal{L}^{\natural})$, and therefore also
that $(T_{t - s} \varphi (\omega_s))_{s \in [0, t]}$ is a martingale for any
solution of the cylinder martingale problem for~$\mathcal{L}$. This easily
implies the main result of the paper.

\begin{theorem}
  \label{thm:unique-mart}There exists a unique solution $\omega$ to the
  cylinder martingale problem for $\mathcal{L}$ with initial distribution
  $\omega_0 \sim \eta \mathd \mu$ with $\eta \in L^2 (\mu)$. Moreover,
  $\omega$ is a homogeneous Markov process with transition kernel $(T_t)_{t
  \geqslant 0}$ and with invariant measure~$\mu$.
\end{theorem}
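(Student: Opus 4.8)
The plan is to combine the existence result of Theorem~\ref{thm:existence-cyl-martingalepb} with a duality/uniqueness argument based on the Kolmogorov equation, exactly in the spirit of Lemma~\ref{lemma:closure}. For existence, we already have from Theorem~\ref{thm:existence-cyl-martingalepb} that any weak limit of the Galerkin approximations $(\omega^m)$ with $\omega^m_0\sim\eta\,\mathd\mu$ solves the cylinder martingale problem with initial distribution $\eta\,\mathd\mu$, so it only remains to argue uniqueness and the Markov/invariance statements. For uniqueness, let $\omega$ be any solution of the cylinder martingale problem for $\mathcal{L}$ with $\omega_0\sim\eta\,\mathd\mu$. By the remark preceding the statement, for every $\varphi\in\mathcal{D}(\mathcal{L}^\natural)$ and every $t\geqslant0$ the process $(T_{t-s}\varphi(\omega_s))_{s\in[0,t]}$ is a martingale; taking expectations between $s=0$ and $s=t$ gives
\[
  \mathbb{E}[\varphi(\omega_t)] = \mathbb{E}[T_t\varphi(\omega_0)] = \int (T_t\varphi)(\omega)\,\eta(\omega)\,\mu(\mathd\omega).
\]
Since $\mathcal{U}\subseteq\mathcal{D}(\mathcal{L}^\natural)$ is dense in $\mathcal{H}$ and $T_t$ is a contraction on $\mathcal{H}=L^2(\mu)$, and since the process is incompressible (so $\varphi\mapsto\mathbb{E}[\varphi(\omega_t)]$ extends continuously to $\mathcal{H}$), this identity determines the one-dimensional marginal law of $\omega_t$ uniquely: it is the law with density $T_t^\ast\eta$ with respect to $\mu$ (where $T_t^\ast$ is the $L^2(\mu)$-adjoint). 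Here one must be a little careful that $\eta\in L^2(\mu)$ and $T_t\varphi$ ranges over a dense subset, so the pairing $\int (T_t\varphi)\,\eta\,\mathd\mu$ pins down $\mathrm{Law}(\omega_t)$ among laws absolutely continuous w.r.t.\ $\mu$ with $L^2$ density — and incompressibility guarantees we stay in that class.

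To upgrade from one-dimensional marginals to finite-dimensional distributions and hence to the full law on $C(\mathbb{R}_+;\mathcal{S}')$, I would run the standard Markovian bootstrap. First observe that the martingale property of $(T_{t-s}\varphi(\omega_s))_s$, applied on an interval $[s,t]$ and conditioned on $\mathcal{F}_s$, gives
\[
  \mathbb{E}[\varphi(\omega_t)\mid\mathcal{F}_s] = (T_{t-s}\varphi)(\omega_s)
\]
for all $\varphi\in\mathcal{D}(\mathcal{L}^\natural)$, and by density (using incompressibility to control the $\mathcal{H}$-limit in $L^1$) for all $\varphi\in\mathcal{H}$, in particular for all cylinder functions. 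This is precisely the statement that $\omega$ is a Markov process with transition semigroup $(T_t)$; iterating it over a time grid $0=t_0<t_1<\cdots<t_n$ expresses every finite-dimensional distribution of $\omega$ in terms of $(T_t)$ and the initial law $\eta\,\mathd\mu$, hence these are the same for any two solutions. Since the solutions have continuous trajectories in $\mathcal{S}'$, equality of finite-dimensional distributions gives equality of laws on $C(\mathbb{R}_+;\mathcal{S}')$, which is uniqueness. Homogeneity of the Markov property is immediate from the fact that the transition kernel $T_{t-s}$ depends only on $t-s$.

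Finally, invariance of $\mu$: taking $\eta\equiv1$ (which lies in $L^2(\mu)$ since $\mu$ is a probability measure), the stationary solution $\omega^m$ of each Galerkin approximation has $\mathrm{Law}(\omega^m_t)=\mu$ for all $t$ by the Proposition in Section~\ref{s:galerkin}; passing to the weak limit, the solution $\omega$ with $\omega_0\sim\mu$ satisfies $\mathrm{Law}(\omega_t)=\mu$ for all $t$, so $\mu$ is invariant for $(T_t)$. (Equivalently, one checks directly that $\int T_t\varphi\,\mathd\mu=\int\varphi\,\mathd\mu$ using that $1$ is fixed, in the $\mathcal{H}$ sense, by the adjoint semigroup, which follows from $\langle\mathcal{L}_\theta\varphi,1\rangle=0$ and $\langle\mathcal{G}^m\varphi,1\rangle=0$ for cylinder $\varphi$ and then passing to the limit.) The main obstacle in the argument is the density/closure bookkeeping: one has to be sure that $(T_t)$ built in Theorem~\ref{thm:exist-kolmogorov} via Lemma~\ref{lemma:closure} genuinely acts on all of $\mathcal{H}$, that the class $\mathcal{U}$ of Kolmogorov-solvable initial data is rich enough that $T_t\mathcal{U}$ (or its $\mathcal{H}$-closure) separates the incompressible laws, and that incompressibility lets one pass all these $\mathcal{H}$-limits inside the expectations defining the finite-dimensional distributions — everything else is the routine Markov-property-from-martingale-problem argument.
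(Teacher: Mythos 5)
Your proposal is correct and follows essentially the same route as the paper: both derive the Markov property $\mathbb{E}[\varphi(\omega_t)\mid\mathcal{F}_s]=T_{t-s}\varphi(\omega_s)$ from the martingale property of $(T_{t-s}\varphi(\omega_s))_s$ (itself a consequence of the Kolmogorov equation via the remark preceding the theorem), extend by density using incompressibility, and determine all finite-dimensional distributions by iteration over a time grid. Your extra discussion of one-dimensional marginals and of invariance via $\eta\equiv 1$ is consistent with, though slightly more detailed than, the paper's treatment.
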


\begin{proof}
  Let us first prove that $(\omega_t)_{t \geqslant 0}$ is Markov. Let $0
  \leqslant t < s$, let $X$ be an $\mathcal{F}_t$-measurable bounded random
  variable, where $\mathcal{F}_t = \sigma (\omega_r : r \in [0, t])$, and let
  $\varphi \in \mathcal{D} (\mathcal{L}^{\natural})$, then $(T_{s - t} \varphi
  (\omega_t))_{t \in [0, s]}$ is a martingale and
  \[ \begin{array}{lll}
       \mathbb{E} [X \varphi (\omega_s)] & = & \mathbb{E} [X T_{s - s} \varphi
       (\omega_s)] =\mathbb{E} [X T_{s - t} \varphi (\omega_t)]
     \end{array} \]
  i.e., $\mathbb{E} [\varphi_0 (\omega_s) | \mathcal{F}_t \nobracket] = T_{s -
  t} \varphi (\omega_t) =\mathbb{E} [\varphi_0 (\omega_s) | \omega_t
  \nobracket]$, and the Markov property is a consequence of another density
  argument. Moreover, its transition kernel is given by the semigroup
  $(T_t)_{t \geqslant 0}$. By an induction argument, it is clear that any
  finite-dimensional marginal is determined by $T$ and by the law of $\omega_0
  \sim \eta \mathd \mu$. As a consequence, the law of the process is unique.
  If $\omega_0 \sim \mu$, then the process is stationary.
\end{proof}

\begin{remark}
  As a by-product note that the formula $(T_t \varphi) (\omega_0) =\mathbb{E}
  [\varphi (\omega_t) | \omega_0]$ allows to extend the semigroup $T$ to a
  bounded semigroup in $L^p$ for all $p \in [1, \infty]$ since
  \[ | \langle \psi, T_t \varphi \rangle | = | \mathbb{E}_{\mu} [\psi
     (\omega_0) (T_t \varphi) (\omega_0)] | = | \mathbb{E}_{\mu} [\psi
     (\omega_0) \varphi (\omega_t)] | \leqslant \| \psi \|_{L^p} \| \varphi
     \|_{L^q} \]
  for all $\psi, \varphi \in L^{\infty} (\mu)$ and all $p, q \in [1, \infty]$
  with $1 / p + 1 / q = 1$. Therefore $\| T_t \varphi \|_{L^q} \leqslant \|
  \varphi \|_{L^q}$. Moreover for all $\varphi \in \mathcal{C}$ such that $\|
  c_{4 p}^{\mathcal{N}} (1 -\mathcal{L}_{\theta})^{- 1 / 2} \varphi \| <
  \infty$ we have
  \[ \| T_t \varphi - \varphi \|_{L^p} = \sup_{\psi : \| \psi \|_{L^q}
     \leqslant 1} \mathbb{E}_{\mu} [\psi (\omega_0) (\varphi (\omega_t) -
     \varphi (\omega_0))] \leqslant (\mathbb{E}_{\mu} [| \varphi (\omega_t) -
     \varphi (\omega_0) |^p])^{1 / p} \rightarrow 0 \]
  as $t \rightarrow 0$ by eq.~{\eqref{eq:mod-cont}}. An approximation argument
  gives that $(T_t)_{t \geqslant 0}$ is strongly continuous in $L^p$ for all
  $1 \leqslant p < \infty$.
\end{remark}

\section{The Kolmogorov equation}\label{s:kolmogorov}

It remains to determine a suitable domain for $\mathcal{L}$ and solve the
Kolmogorov backward equation
\[ \partial_t \varphi (t) =\mathcal{L} \varphi (t), \]
for a sufficiently large class of initial data. In order to do so, we consider
the backward equation for the Galerkin approximation with generator
$\mathcal{L}^m$ and derive uniform estimates. By compactness, this yields the
existence of strong solutions to the backward equation after removing the
cutoff. Uniqueness follows by the dissipativity of~$\mathcal{L}$.

\subsection{A priori estimates }\label{s:a-priori-bounds}

\begin{lemma}
  \label{lem:exist-m-kolmogorov}For any $\varphi_0 \in \mathcal{V} \assign (1
  +\mathcal{N})^{- 2} (1 -\mathcal{L}_{\theta})^{- 1} \mathcal{H}$, there
  exists a solution
  \[ \varphi^m \in C (\mathbb{R}_+, \mathcal{D} (\mathcal{L}^m)) \cap C^1
     (\mathbb{R}_+ ; \mathcal{H}) \]
  to the backward Kolmogorov equation
  \[ \partial_t \varphi^m (t) =\mathcal{L}^m \varphi^m (t) \]
  with $\varphi^m (0) = \varphi_0$ and which satisfies the estimates
  \[ \| (1 +\mathcal{N})^p \varphi^m (t) \|^2 + \int_0^t e^{- C (t - s)} \| (1
     +\mathcal{N})^p (1 -\mathcal{L}_{\theta})^{1 / 2} \varphi^m (s) \|^2
     \mathd s \lesssim_p e^{C t} \| (1 +\mathcal{N})^p \varphi_0 \|^2, \]
  and
  \[ \| (1 +\mathcal{N})^p (1 -\mathcal{L}_{\theta}) \varphi^m (t) \|
     \lesssim_{t, m, p} \| (1 +\mathcal{N})^{p + 1} (1 -\mathcal{L}_{\theta})
     \varphi_0 \|, \]
  for all $t \geqslant 0$ and $p \geqslant 1$.
\end{lemma}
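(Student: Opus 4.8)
The plan is to construct $\varphi^m$ by a further Galerkin truncation in the chaos degree and then to read off the two bounds from weighted energy estimates that are stable under this truncation. Let $P_N$ be the orthogonal projection of $\mathcal{H}$ onto chaoses of degree at most $N$, and set $\mathcal{L}^{m,N}\assign\mathcal{L}_\theta+P_N\mathcal{G}^mP_N$ on $P_N\mathcal{H}$. By~\eqref{eq:ltheta-fourier} the operator $\mathcal{L}_\theta$ is self-adjoint and nonpositive, hence generates an analytic contraction semigroup, and by~\eqref{eq:m-dep-G} (using $\mathcal{N}\leq N$ on $P_N\mathcal{H}$ together with $\|(1-\mathcal{L}_\theta)^{1/2}\psi\|\leq\delta\|(1-\mathcal{L}_\theta)\psi\|+C_\delta\|\psi\|$) the perturbation $P_N\mathcal{G}^mP_N$ is $\mathcal{L}_\theta$-bounded with relative bound $0$; therefore $\mathcal{L}^{m,N}$ generates an analytic $C_0$-semigroup on $P_N\mathcal{H}$, and since $\varphi_0\in\mathcal{V}\subseteq\mathcal{D}(\mathcal{L}^m)$ forces $P_N\varphi_0\in\mathcal{D}(\mathcal{L}^{m,N})=\mathcal{D}(\mathcal{L}_\theta|_{P_N\mathcal{H}})$, the function $\varphi^{m,N}(t)\assign e^{t\mathcal{L}^{m,N}}P_N\varphi_0$ lies in $C(\mathbb{R}_+;\mathcal{D}(\mathcal{L}^{m,N}))\cap C^1(\mathbb{R}_+;P_N\mathcal{H})$, solves $\partial_t\varphi^{m,N}=\mathcal{L}^{m,N}\varphi^{m,N}$, and, being analytic in $t$, is regular enough to legitimise all the manipulations below.

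For the first estimate, fix $p\geq1$, put $w(n)=(1+n)^p$ and abbreviate $\varphi=\varphi^{m,N}(t)$. Using $\langle w(\mathcal{N})^2\varphi,\mathcal{L}_\theta\varphi\rangle=-\|(-\mathcal{L}_\theta)^{1/2}w(\mathcal{N})\varphi\|^2$ and $P_N\varphi=\varphi$, one gets
\[ \tfrac12\tfrac{\mathd}{\mathd t}\|w(\mathcal{N})\varphi\|^2=-\|(-\mathcal{L}_\theta)^{1/2}w(\mathcal{N})\varphi\|^2+\langle w(\mathcal{N})^2\varphi,\mathcal{G}^m\varphi\rangle. \]
Since $\mathcal{G}^m$ is skew-adjoint and $\mathcal{G}^m_\pm$ shift the degree by $\pm1$, the relation~\eqref{eq:adj} gives the exact identity $\langle w(\mathcal{N})^2\varphi,\mathcal{G}^m\varphi\rangle=\langle(w(\mathcal{N})^2-w(\mathcal{N}-1)^2)\varphi,\mathcal{G}^m_+\varphi\rangle$, so that only the discrete derivative of the weight survives; since $w(n)^2-w(n-1)^2\lesssim_p(1+n)^{2p-1}=(1+n)^p(1+n)^{p-1}$, Cauchy--Schwarz and the uniform bound~\eqref{eq:unifbdG1} with weight $(1+\cdot)^{p-1}$ and exponent $\gamma\in(\tfrac{1}{2\theta},\tfrac12)$ give
\[ |\langle w(\mathcal{N})^2\varphi,\mathcal{G}^m\varphi\rangle|\lesssim_p\|(1+\mathcal{N})^p(1-\mathcal{L}_\theta)^{\gamma}\varphi\|\,\|(1+\mathcal{N})^p(1-\mathcal{L}_\theta)^{\frac12(1+1/\theta)-\gamma}\varphi\|. \]
Both exponents lie in $(0,\tfrac12)$ — this is where $\theta>1$ is essential, as it makes $\tfrac12(1+1/\theta)$ smaller than $1$ — and they sum to $1+1/\theta$, a number smaller than $2$; interpolating each factor between $(1-\mathcal{L}_\theta)^0$ and $(1-\mathcal{L}_\theta)^{1/2}$ and applying Young's inequality bounds the right-hand side by $\varepsilon\|(1+\mathcal{N})^p(1-\mathcal{L}_\theta)^{1/2}\varphi\|^2+C_{\varepsilon,p}\|(1+\mathcal{N})^p\varphi\|^2$. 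As $\|(-\mathcal{L}_\theta)^{1/2}w(\mathcal{N})\varphi\|^2=\|(1+\mathcal{N})^p(1-\mathcal{L}_\theta)^{1/2}\varphi\|^2-\|(1+\mathcal{N})^p\varphi\|^2$, choosing $\varepsilon$ small gives
\[ \tfrac{\mathd}{\mathd t}\|(1+\mathcal{N})^p\varphi^{m,N}\|^2+\|(1+\mathcal{N})^p(1-\mathcal{L}_\theta)^{1/2}\varphi^{m,N}\|^2\leq C\|(1+\mathcal{N})^p\varphi^{m,N}\|^2 \]
with $C=C(p)$ independent of $m$ and $N$; multiplying by $e^{-Ct}$ and integrating yields the first asserted estimate for $\varphi^{m,N}$.

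The second estimate follows from the same scheme applied to $W\assign(1+\mathcal{N})^p(1-\mathcal{L}_\theta)$, now allowing $m$ and $t$ to enter the constants. Skew-adjointness of $\mathcal{G}^m$ gives $\langle W^2\varphi,\mathcal{G}^m\varphi\rangle=\langle W\varphi,[W,\mathcal{G}^m]\varphi\rangle$, and by Leibniz $[W,\mathcal{G}^m]=(1+\mathcal{N})^p[(1-\mathcal{L}_\theta),\mathcal{G}^m]+[(1+\mathcal{N})^p,\mathcal{G}^m](1-\mathcal{L}_\theta)$. On the support of $\mathcal{G}^m_\pm$ the two split modes have size at most $m$, so the symbol of $[(1-\mathcal{L}_\theta),\mathcal{G}^m]$ is $\lesssim m^{2\theta}$ times that of $\mathcal{G}^m$, and that of $[(1+\mathcal{N})^p,\mathcal{G}^m]$ is $\lesssim_p(1+\mathcal{N})^{p-1}$ times that of $\mathcal{G}^m$; inserting~\eqref{eq:m-dep-G} gives $\|[W,\mathcal{G}^m]\varphi\|\lesssim_{m,p}\|(1+\mathcal{N})^{p+1}(1-\mathcal{L}_\theta)^{1/2}\varphi\|+\|(1+\mathcal{N})^p(1-\mathcal{L}_\theta)^{3/2}\varphi\|$. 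After Cauchy--Schwarz the contribution of the second summand is absorbed into the dissipative term, using $\|(1+\mathcal{N})^p(1-\mathcal{L}_\theta)^{3/2}\varphi\|^2=\|(-\mathcal{L}_\theta)^{1/2}W\varphi\|^2+\|W\varphi\|^2$, and one is left with
\[ \tfrac{\mathd}{\mathd t}\|W\varphi^{m,N}\|^2\lesssim_{m,p}\|W\varphi^{m,N}\|^2+\|(1+\mathcal{N})^{p+1}(1-\mathcal{L}_\theta)^{1/2}\varphi^{m,N}\|^2. \]
The last term sits one $\mathcal{N}$-level above $\|W\varphi^{m,N}\|$ but carries only half a power of $(1-\mathcal{L}_\theta)$, so by the \emph{first} estimate applied with $p$ replaced by $p+1$ its time integral is $\lesssim_t\|(1+\mathcal{N})^{p+1}\varphi_0\|^2$, which is $\leq\|(1+\mathcal{N})^{p+1}(1-\mathcal{L}_\theta)\varphi_0\|^2$; Grönwall's lemma then produces $\|W\varphi^{m,N}(t)\|\lesssim_{t,m,p}\|(1+\mathcal{N})^{p+1}(1-\mathcal{L}_\theta)\varphi_0\|$, uniformly in $N$. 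Finally, these $N$-uniform bounds, together with $\partial_t\varphi^{m,N}=\mathcal{L}^{m,N}\varphi^{m,N}$ and~\eqref{eq:m-dep-G}, give compactness of $(\varphi^{m,N})_N$ (Aubin--Lions); any limit point $\varphi^m$ solves $\partial_t\varphi^m=\mathcal{L}^m\varphi^m$ in $C(\mathbb{R}_+;\mathcal{D}(\mathcal{L}^m))\cap C^1(\mathbb{R}_+;\mathcal{H})$ with $\varphi^m(0)=\varphi_0$, and inherits both estimates by lower semicontinuity.

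The crux is the first, $m$-uniform, estimate: the crude bound~\eqref{eq:m-dep-G} carries a factor $m$ and is useless there, so one must exploit the near skew-adjointness of $\mathcal{G}^m$, so that only its commutator with the weight $w(\mathcal{N})$ enters, and then close the loop with the refined bounds~\eqref{eq:unifbdG1}--\eqref{eq:unifbdG2}, which pay for the loss of one power of $\mathcal{N}$ with only the subcritical power $\tfrac12(1+1/\theta)$ of $(1-\mathcal{L}_\theta)$; it is exactly the hyper-viscous assumption $\theta>1$ that lets Young's inequality absorb this loss into the dissipation.
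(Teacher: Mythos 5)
Your proof is correct, and its crux --- the $m$-uniform weighted energy estimate --- is obtained exactly as in the paper: using the adjoint relation \eqref{eq:adj} to collapse $\langle w(\mathcal{N})^2\varphi,\mathcal{G}^m\varphi\rangle$ to $\langle (w(\mathcal{N})^2-w(\mathcal{N}-1)^2)\varphi,\mathcal{G}^m_+\varphi\rangle$, paying one power of $\mathcal{N}$ but only the subcritical power $(1+1/\theta)/2<1$ of $(1-\mathcal{L}_{\theta})$ via \eqref{eq:unifbdG1}, and absorbing by interpolation and Young; with $w(n)=(1+n)^p$ this is verbatim the paper's computation. You differ in two technical respects. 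First, the regularisation: the paper smooths the drift to a bounded operator $\mathcal{G}^{m,h}=J_h\mathcal{G}^mJ_h$ with $J_h=e^{-h(\mathcal{N}-\mathcal{L}_{\theta})}$ and solves the mild equation \eqref{eq:duhamel-mh} by fixed point before letting $h\to 0$, whereas you truncate in chaos degree with $P_N$ and invoke analytic-semigroup perturbation theory on $P_N\mathcal{H}$ before letting $N\to\infty$; both devices are compressions by self-adjoint operators commuting with $\mathcal{N}$, so the adjoint structure and hence the a priori estimate survive in either case --- yours buys off-the-shelf existence and smoothness for the approximations, the paper's keeps all chaos levels at every stage. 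Second, the $m$-dependent bound: the paper reads it off the Duhamel formula using $(1-\mathcal{L}_{\theta})\Pi_m\lesssim m^{2\theta}(1+\mathcal{N})\Pi_m$ together with the first estimate at level $p+1$, while you run a second Gr\"onwall argument through the commutator identity $\langle W^2\varphi,\mathcal{G}^m\varphi\rangle=\langle W\varphi,[W,\mathcal{G}^m]\varphi\rangle$; the two are essentially equivalent, both exploiting the spectral localisation of $\mathcal{G}^m$ and \eqref{eq:m-dep-G}. The only point where you are slightly more optimistic than warranted is the final compactness step: your uniform bounds place $(\varphi^{m,N}(t))_N$ in a bounded set of $\mathcal{D}(\mathcal{L}^m)$ itself, which embeds compactly only into spaces with strictly fewer powers of $\mathcal{N}$ and $(1-\mathcal{L}_{\theta})$, so Aubin--Lions gives strong convergence only in such a weaker topology and membership of the limit in $C(\mathbb{R}_+;\mathcal{D}(\mathcal{L}^m))$ must be recovered a posteriori from the equation and the weak-$\ast$ bounds; the paper's own passage $h\to 0$ is equally terse here, so this is a matter of polish rather than a substantive gap.
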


\begin{proof}
  Take $h > 0$ and let $\mathcal{G}^{m, h} = J_h \mathcal{G}^m J_h$, where
  $J_h = e^{- h (\mathcal{N}-\mathcal{L}_{\theta})}$. The operator
  $\mathcal{G}^{m, h}$ is bounded on $\mathcal{H}$ by the estimates in
  Lemma~\ref{lem:uniform-bds-G}. Consider $\varphi_0^m \in \mathcal{D}
  (\mathcal{L}^m)$. Using the fact that $\mathcal{L}_{\theta}$ is the
  generator of a contraction semigroup, we take $(\varphi^m (t))_{t \geqslant
  0}$ to be the solution to the integral equation
  \begin{equation}
    \varphi^{m, h} (t) = e^{\mathcal{L}_{\theta} t} \varphi_0 + \int_0^t
    e^{\mathcal{L}_{\theta} (t - s)} \mathcal{G}^{m, h} \varphi^{m, h} (s)
    \mathd s \label{eq:duhamel-mh}
  \end{equation}
  in $C (\mathbb{R}_+ ; (1 -\mathcal{L}_{\theta}) \mathcal{H})$ and deduce
  easily that $\varphi^{m, h}$ solves the equation $\partial_t \varphi^{m, h}
  (t) = (\mathcal{L}_{\theta} +\mathcal{G}^{m, h}) \varphi^{m, h} (t)$.
  Moreover
  \[ \| (1 -\mathcal{L}_{\theta}) (1 +\mathcal{N})^{2 p} \varphi^{m, h} (t) \|
     \leqslant C_{t, h, m} \| (1 -\mathcal{L}_{\theta}) (1 +\mathcal{N})^{2 p}
     \varphi_0 \|, \]
  for any finite $t \geqslant 0$ and $p > 0$ but not uniformly in $h$ and~$m$.
  Now
  \begin{eqnarray*}
    &  & \langle (1 +\mathcal{N})^{2 p} \varphi^{m, h} (t), \mathcal{G}^{m,
    h} \varphi^{m, h} (t) \rangle\\
    & = & \langle (1 +\mathcal{N})^{2 p} \varphi^{m, h} (t), \mathcal{G}^{m,
    h}_+ \varphi^{m, h} (t) \rangle - \langle \mathcal{G}^{m, h}_+ (1
    +\mathcal{N})^{2 p} \varphi^{m, h} (t), \varphi^{m, h} (t) \rangle\\
    & = & \langle (1 +\mathcal{N})^{2 p} \varphi^{m, h} (t), \mathcal{G}^{m,
    h}_+ \varphi^{m, h} (t) \rangle - \langle \mathcal{N}^{2 p}
    \mathcal{G}^{m, h}_+ \varphi^{m, h} (t), \varphi^{m, h} (t) \rangle\\
    & = & \langle ((1 +\mathcal{N})^{2 p} -\mathcal{N}^{2 p}) \varphi^{m, h}
    (t), \mathcal{G}^{m, h}_+ \varphi^{m, h} (t) \rangle .
  \end{eqnarray*}
  Using $| (1 +\mathcal{N})^{2 p} -\mathcal{N}^{2 p} | \lesssim (1
  +\mathcal{N})^{2 p - 1}$ and the uniform estimates in
  Lemma~\ref{lem:uniform-bds-G} we have that, for some $\sigma \in (0, 1 /
  2)$,
  \begin{eqnarray*}
    &  & | \langle ((1 +\mathcal{N})^{2 p} -\mathcal{N}^{2 p}) \varphi^{m, h}
    (t), \mathcal{G}^{m, h}_+ \varphi^{m, h} (t) \rangle |\\
    & \leqslant & \| (1 +\mathcal{N})^p (1 -\mathcal{L}_{\theta})^{\sigma}
    \varphi^{m, h} (t) \| \| (1 +\mathcal{N})^{p - 1} (1
    -\mathcal{L}_{\theta})^{- \sigma} \mathcal{G}^{m, h}_+ \varphi^{m, h} (t)
    \|\\
    & \leqslant & \| (1 +\mathcal{N})^p (1 -\mathcal{L}_{\theta})^{\sigma}
    \varphi^{m, h} (t) \|^2 .
  \end{eqnarray*}
  Therefore,
  \[ | \langle (1 +\mathcal{N})^{2 p} \varphi^{m, h} (t), \mathcal{G}^{m, h}
     \varphi^{m, h} (t) \rangle | \lesssim \| (1 +\mathcal{N})^p (1
     -\mathcal{L}_{\theta})^{\sigma} \varphi^{m, h} (t) \|^2 \]
  and by interpolation we can bound this by
  \[ | \langle (1 +\mathcal{N})^{2 p} \varphi^{m, h} (t), \mathcal{G}^{m, h}
     \varphi^{m, h} (t) \rangle | \leqslant C_{\delta} \| (1 +\mathcal{N})^p
     \varphi^{m, h} (t) \|^2 + \delta \| (1 +\mathcal{N})^p (1
     -\mathcal{L}_{\theta})^{1 / 2} \varphi^{m, h} (t) \|^2, \]
  for some small $\delta > 0$. Therefore, we have
  \begin{eqnarray*}
    &  & \partial_t \frac{1}{2} \| (1 +\mathcal{N})^p \varphi^{m, h} (t) \|^2
    = \langle (1 +\mathcal{N})^{2 p} \varphi^{m, h} (t), (\mathcal{L}_{\theta}
    +\mathcal{G}^{m, h}) \varphi^{m, h} (t) \rangle\\
    & = & - \| (1 +\mathcal{N})^p (1 -\mathcal{L}_{\theta})^{1 / 2}
    \varphi^{m, h} (t) \|^2 + \| (1 +\mathcal{N})^p \varphi^{m, h} (t) \|^2 +
    \langle (1 +\mathcal{N})^{2 p} \varphi^{m, h} (t), \mathcal{G}^{m, h}
    \varphi^{m, h} (t) \rangle\\
    & \leqslant & - (1 - \delta) \| (1 +\mathcal{N})^p (1
    -\mathcal{L}_{\theta})^{1 / 2} \varphi^{m, h} (t) \|^2 + C_{\delta}' \| (1
    +\mathcal{N})^p \varphi^{m, h} (t) \|^2
  \end{eqnarray*}
  uniformly in $m$ and~$h$. Integrating this inequality gives
  \[ \| (1 +\mathcal{N})^p \varphi^{m, h} (t) \|^2 + \int_0^t e^{- C (t - s)}
     \| (1 +\mathcal{N})^p (1 -\mathcal{L}_{\theta})^{1 / 2} \varphi^{m, h}
     (s) \|^2 \mathd s \lesssim e^{C t} \| (1 +\mathcal{N})^p \varphi_0 \|^2
  \]
  for all $p \geqslant 1$ where the constants are uniform in $m$ and~$h$.
  Inserting this a priori bound in the mild formulation in
  eq.~{\eqref{eq:duhamel-mh}} we obtain
  \[ \| (1 +\mathcal{N})^p (1 -\mathcal{L}_{\theta}) \varphi^{m, h} (t) \|
     \leqslant \| (1 +\mathcal{N})^p (1 -\mathcal{L}_{\theta}) \varphi^{m, h}
     (0) \| + \int_0^t \| (1 +\mathcal{N})^{p + 1} (1
     -\mathcal{L}_{\theta})^{1 / 2} \varphi^{m, h} (s) \| \mathd s \]
  \[ \lesssim_t \| (1 +\mathcal{N})^p (1 -\mathcal{L}_{\theta}) \varphi_0 \| +
     \| (1 +\mathcal{N})^{p + 1} \varphi_0 \|, \]
  where we also used that
  \begin{eqnarray*}
    \| (1 +\mathcal{N})^p (1 -\mathcal{L}_{\theta}) \mathcal{G}^{m, h}
    \varphi^{m, h} (s) \| & \leqslant & C (m) \| (1 +\mathcal{N})^{p + 1}
    \mathcal{G}^{m, h} \varphi^{m, h} (s) \|\\
    & \lesssim_m & \| (1 +\mathcal{N})^{p + 1} (1 -\mathcal{L}_{\theta})^{1 /
    2} \varphi^{m, h} (s) \|
  \end{eqnarray*}
  by the presence of the Galerkin projectors and our (non-uniform) bounds.
  Indeed, note that
  \[ (1 -\mathcal{L}_{\theta}) \Pi_m \lesssim | m |^{2 \theta} (1
     +\mathcal{N}) \Pi_m . \]
  We conclude that
  \[ \| (1 +\mathcal{N})^p (1 -\mathcal{L}_{\theta}) \varphi^{m, h} (t) \|
     \lesssim_{t, m} \| (1 +\mathcal{N})^{p + 1} (1 -\mathcal{L}_{\theta})
     \varphi_0 \|, \]
  uniformly in~$h$. We can then pass to the limit (by subsequence) as $h
  \rightarrow 0$ and obtain a function $\varphi^m \in C (\mathbb{R}_+, (1
  +\mathcal{N})^{- p} (1 -\mathcal{L}_{\theta})^{- 1} \mathcal{H})$ satisfying
  the estimates
  \[ \| (1 +\mathcal{N})^p \varphi^m (t) \|^2 + \int_0^t e^{- C (t - s)} \| (1
     +\mathcal{N})^p (1 -\mathcal{L}_{\theta})^{1 / 2} \varphi^m (s) \|^2
     \mathd s \lesssim e^{C t} \| (1 +\mathcal{N})^p \varphi_0 \|^2 \]
  and
  \[ \| (1 +\mathcal{N})^p (1 -\mathcal{L}_{\theta}) \varphi^m (t) \|
     \lesssim_{t, m} \| (1 +\mathcal{N})^{p + 1} (1 -\mathcal{L}_{\theta})
     \varphi_0 \|, \]
  for all $t \geqslant 0$ and $p \geqslant 1$. As a consequence, $\varphi^m
  \in C (\mathbb{R}_+, \mathcal{D} (\mathcal{L}^m))$ for all $t \geqslant 0$
  as soon as $\| (1 +\mathcal{N})^2 (1 -\mathcal{L}_{\theta}) \varphi_0 \| <
  \infty$. By passing to the limit in the equation, $\varphi^m$ also satisfies
  \[ \partial_t \varphi^m (t) = (\mathcal{L}_{\theta} +\mathcal{G}^m)
     \varphi^m (t) =\mathcal{L}^m \varphi^m (t) . \]
\end{proof}

Recall that we write $T^m$ to indicate the semigroup generated by the Galerkin
approximation~$\omega^m$. Moreover, if we denote by $\hat{\mathcal{L}}^m$ its
Hille--Yosida generator, we have the following result.

\begin{lemma}
  \label{eq:lemma-lm}$(\mathcal{L}^m, \mathcal{D} (\mathcal{L}^m))$ is
  closable and its closure is the generator $\hat{\mathcal{L}}^m$. In
  particular, if $\varphi \in \mathcal{V}$, then $\varphi^m (t) = T^m_t
  \varphi$ solves
  \[ \partial_t \varphi^m (t) =\mathcal{L}^m \varphi^m (t), \]
  and we have
  \[ \mathcal{L}^m T^m_t \varphi = T^m_t \mathcal{L}^m \varphi . \]
\end{lemma}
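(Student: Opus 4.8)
The plan is to reduce everything to the abstract statement of Lemma~\ref{lemma:closure}, applied with $\mathcal{A} = \mathcal{L}^m$, $\mathcal{D}(\mathcal{A}) = \mathcal{D}(\mathcal{L}^m)$ and $\mathcal{U}_{\mathcal{A}} = \mathcal{V}$. Three hypotheses must be verified. First, $\mathcal{L}^m$ is densely defined, because $\mathcal{D}(\mathcal{L}^m) = (1+\mathcal{N})^{-1}(1-\mathcal{L}_{\theta})^{-1}\mathcal{H}$ is the range of an injective operator with dense range. Second, $\mathcal{L}^m$ is dissipative: this is exactly the computation recorded just before the statement, $\langle \varphi, \mathcal{L}^m \varphi\rangle = \langle \varphi, \mathcal{L}_{\theta}\varphi\rangle = -\|(-\mathcal{L}_{\theta})^{1/2}\varphi\|^2 \leqslant 0$ for all $\varphi \in \mathcal{D}(\mathcal{L}^m)$, where the vanishing of the $\mathcal{G}^m$-contribution follows from eq.~\eqref{eq:adj}. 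Third, $\mathcal{V}$ is dense in $\mathcal{H}$ (again being the range of an operator with dense range), it is contained in $\mathcal{D}(\mathcal{L}^m)$ by inspection of the definitions, and for every $\varphi_0 \in \mathcal{V}$ the Kolmogorov equation $\partial_t \varphi^m(t) = \mathcal{L}^m \varphi^m(t)$ with $\varphi^m(0) = \varphi_0$ admits a solution in $C(\mathbb{R}_+; \mathcal{D}(\mathcal{L}^m)) \cap C^1(\mathbb{R}_+; \mathcal{H})$ — this is precisely Lemma~\ref{lem:exist-m-kolmogorov}.

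Lemma~\ref{lemma:closure} then gives at once that $\mathcal{L}^m$ is closable, that its closure $\mathcal{B}$ is the \emph{unique} extension of $\mathcal{L}^m$ generating a strongly continuous contraction semigroup $(S_t)_{t\geqslant 0}$ on $\mathcal{H}$, and that $\mathcal{L}^m S_t \varphi_0 = S_t \mathcal{L}^m \varphi_0$ for all $\varphi_0 \in \mathcal{V}$ (this is eq.~\eqref{eq:kolm-aa}). The only thing left is to identify $(S_t)$ with $(T^m_t)$. By Lemma~\ref{lem:semigroup}, $(T^m_t)$ is a strongly continuous contraction semigroup on $L^2(\mu) \cong \mathcal{H}$; and, as already observed before the statement, the It{\^o} formula \eqref{eq:ito-cyl} for cylinder functions together with a density argument (using the bounds of Lemma~\ref{lemma:galerkin-estimates}) shows that the Hille--Yosida generator $\hat{\mathcal{L}}^m$ of $(T^m_t)$ is an extension of $\mathcal{L}^m$. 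Since $\hat{\mathcal{L}}^m$ is therefore an extension of $\mathcal{L}^m$ generating a strongly continuous contraction semigroup, the uniqueness clause of Lemma~\ref{lemma:closure} forces $\hat{\mathcal{L}}^m = \mathcal{B}$ and $T^m_t = S_t$; in particular the closure of $\mathcal{L}^m$ equals $\hat{\mathcal{L}}^m$.

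The remaining assertions are then immediate restatements. For $\varphi \in \mathcal{V} \subseteq \mathcal{D}(\mathcal{L}^m)$, put $\varphi^m(t) = T^m_t \varphi = S_t \varphi$; Lemma~\ref{lemma:closure} (via Lemma~\ref{lem:exist-m-kolmogorov}) gives $\varphi^m \in C(\mathbb{R}_+; \mathcal{D}(\mathcal{L}^m)) \cap C^1(\mathbb{R}_+; \mathcal{H})$ with $\partial_t \varphi^m(t) = \mathcal{L}^m \varphi^m(t)$, the solution being unique by dissipativity of $\mathcal{L}^m$, and eq.~\eqref{eq:kolm-aa} reads exactly $\mathcal{L}^m T^m_t \varphi = T^m_t \mathcal{L}^m \varphi$.

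I do not expect a genuine obstacle here: all the analytic content has been front-loaded, into the a priori estimates of Lemma~\ref{lem:exist-m-kolmogorov} and into the abstract Lemma~\ref{lemma:closure}. The one point deserving a moment's care is the identification step, where one must use \emph{both} that $(T^m_t)$ is strongly continuous as a semigroup on $\mathcal{H}$ \emph{and} that its generator genuinely extends $\mathcal{L}^m$, so that the uniqueness part of Lemma~\ref{lemma:closure} can be invoked.
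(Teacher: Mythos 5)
Your proposal is correct and follows essentially the same route as the paper: the paper likewise shows that the Hille--Yosida generator $\hat{\mathcal{L}}^m$ extends $(\mathcal{L}^m,\mathcal{D}(\mathcal{L}^m))$ via the It{\^o}-formula identity $T^m_t\varphi-\varphi=\int_0^t T^m_s\mathcal{L}^m\varphi\,\mathd s$ (extended by density, with continuity of $s\mapsto T^m_s\mathcal{L}^m\varphi$ from Lemma~\ref{lem:semigroup}), and then invokes Lemma~\ref{lemma:closure} together with Lemma~\ref{lem:exist-m-kolmogorov} to identify the closure with $\hat{\mathcal{L}}^m$ and obtain the commutation relation. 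The only difference is cosmetic ordering: you apply Lemma~\ref{lemma:closure} first and identify the semigroups afterwards, while the paper establishes the extension property first; the ingredients and the logic are the same.
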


\begin{proof}
  Let $(\omega^m_t)_{t \geqslant 0}$ be a solution to the Galerkin
  approximation~{\eqref{eq:ns-vort-appr}} with initial condition~$\omega_0$.
  If $\varphi \in \mathcal{C}$ is a cylinder function, then we have
  \[ T^m_t \varphi (\omega_0) - \varphi (\omega_0) =\mathbb{E}_{\omega_0}
     \left[ \int_0^t \mathcal{L}^m \varphi (\omega^m_s) \mathd s \right] =
     \int_0^t T^m_s (\mathcal{L}^m \varphi) (\omega_0) \mathd s. \]
  By approximation (using a Bochner integral in $\mathcal{H}$ on the
  right-hand side), we can extend this point-wise formula to all $\varphi \in
  \mathcal{D} (\mathcal{L}^m)$ obtaining for them that $T^m_t \varphi -
  \varphi = \int_0^t T^m_s \mathcal{L}^m \varphi \mathd s$ in~$\mathcal{H}$.
  For every $\varphi \in \mathcal{D} (\mathcal{L}^m)$,
  Lemma~\ref{lem:semigroup} implies that the map $s \mapsto T^m_s
  \mathcal{L}^m \varphi \in \mathcal{H}$ is continuous, and therefore
  \[ \frac{T_t^m \varphi - \varphi}{t} \rightarrow \mathcal{L}^m \varphi,
     \qquad \text{as } t \rightarrow 0, \qquad \varphi \in \mathcal{D}
     (\mathcal{L}^m), \]
  with convergence in~$\mathcal{H}$. As a consequence, $\varphi \in
  \mathcal{D} (\hat{\mathcal{L}}^m)$ and we conclude that
  $\hat{\mathcal{L}}^m$ is an extension of $(\mathcal{L}^m, \mathcal{D}
  (\mathcal{L}^m))$. By Lemma~\ref{lemma:closure}, we have that the closure of
  $\mathcal{L}^m$ is $\hat{\mathcal{L}}^m$ and that $\mathcal{L}^m T^m_t
  \varphi = T^m_t \mathcal{L}^m \varphi$ for all $\varphi \in \mathcal{V}$.
\end{proof}

Using the commutation $\mathcal{L}^m T^m_t \varphi = T^m_t \mathcal{L}^m
\varphi$, we are able to get better estimates, uniform in~$m$.

\begin{corollary}
  \label{cor:est-phi}For all $\varphi_0 \in \mathcal{V}$ and for all $\alpha
  \geqslant 1$, we have
  \begin{equation}
    \| (1 +\mathcal{N})^{\alpha} \partial_t \varphi^m (t) \|^2 = \| (1
    +\mathcal{N})^{\alpha} \mathcal{L}^m \varphi^m (t) \|^2 \lesssim e^{t C}
    \| (1 +\mathcal{N})^{\alpha} \mathcal{L}^m \varphi_0 \|^2,
    \label{eq:ctrl-dtphi}
  \end{equation}
  and
  \begin{equation}
    \| (1 +\mathcal{N})^{\alpha} (1 -\mathcal{L}_{\theta})^{1 / 2} \varphi^m
    (t) \|^2 \lesssim t e^{t C} \| (1 +\mathcal{N})^{\alpha} \mathcal{L}^m
    \varphi^m_0 \|^2 + \| (1 +\mathcal{N})^{\alpha} (1
    -\mathcal{L}_{\theta})^{1 / 2} \varphi_0 \|^2 . \label{eq:ctrl-phi}
  \end{equation}
\end{corollary}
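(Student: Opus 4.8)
The plan is to exploit the commutation relation $\mathcal{L}^m T^m_t\varphi = T^m_t\mathcal{L}^m\varphi$ from Lemma~\ref{eq:lemma-lm}. Since $\varphi^m(t) = T^m_t\varphi_0$, this says that $\partial_t\varphi^m(t) = \mathcal{L}^m\varphi^m(t) = \mathcal{L}^m T^m_t\varphi_0 = T^m_t(\mathcal{L}^m\varphi_0)$, so that $\psi^m := \partial_t\varphi^m$ is itself a solution of the Galerkin backward Kolmogorov equation $\partial_t\psi^m(t) = \mathcal{L}^m\psi^m(t)$ with initial datum $\psi^m(0) = \mathcal{L}^m\varphi_0$. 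The identity in~\eqref{eq:ctrl-dtphi} is then just the equation $\partial_t\varphi^m(t) = \mathcal{L}^m\varphi^m(t)$, and the bound follows by applying the first a priori estimate of Lemma~\ref{lem:exist-m-kolmogorov} (with $p = \alpha$) to $\psi^m$ in place of $\varphi^m$: that estimate only uses the equation and the initial datum, so it yields $\|(1+\mathcal{N})^\alpha\psi^m(t)\|^2 \lesssim e^{Ct}\|(1+\mathcal{N})^\alpha\psi^m(0)\|^2 = e^{Ct}\|(1+\mathcal{N})^\alpha\mathcal{L}^m\varphi_0\|^2$, as well as the integrated bound $\int_0^t e^{-C(t-s)}\|(1+\mathcal{N})^\alpha(1-\mathcal{L}_\theta)^{1/2}\psi^m(s)\|^2\,\mathd s \lesssim e^{Ct}\|(1+\mathcal{N})^\alpha\mathcal{L}^m\varphi_0\|^2$, which is used below.

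For~\eqref{eq:ctrl-phi} I would \emph{not} run an energy estimate directly on $\|(1+\mathcal{N})^\alpha(1-\mathcal{L}_\theta)^{1/2}\varphi^m(t)\|$, whose time derivative produces the cross term $\langle(1+\mathcal{N})^{2\alpha}(1-\mathcal{L}_\theta)\varphi^m,\mathcal{G}^m\varphi^m\rangle$ which cannot be closed at this regularity level uniformly in $m$. Instead I would integrate the equation: $\varphi^m(t) = \varphi_0 + \int_0^t \mathcal{L}^m\varphi^m(s)\,\mathd s$ as an identity in $\mathcal{H}$, hence, after applying the spectral multiplier $(1+\mathcal{N})^\alpha(1-\mathcal{L}_\theta)^{1/2}$ (legitimate once the integrand is known to lie in $L^2_{\mathrm{loc}}(\mathbb{R}_+;\mathcal{H})$ from the previous paragraph), also in $\mathcal{H}$. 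By the triangle inequality and then Cauchy--Schwarz in time, $\left(\int_0^t\|\cdot\|\,\mathd s\right)^2 \le t\int_0^t\|\cdot\|^2\,\mathd s$, one gets
\[ \|(1+\mathcal{N})^\alpha(1-\mathcal{L}_\theta)^{1/2}\varphi^m(t)\|^2 \lesssim \|(1+\mathcal{N})^\alpha(1-\mathcal{L}_\theta)^{1/2}\varphi_0\|^2 + t\int_0^t \|(1+\mathcal{N})^\alpha(1-\mathcal{L}_\theta)^{1/2}\mathcal{L}^m\varphi^m(s)\|^2\,\mathd s. \]
Since $\mathcal{L}^m\varphi^m = \psi^m$, the last integral is bounded — after using $1 \le e^{Ct}e^{-C(t-s)}$ on $[0,t]$ — by the integrated estimate for $\psi^m$ recorded above, which is $\lesssim e^{Ct}\|(1+\mathcal{N})^\alpha\mathcal{L}^m\varphi_0\|^2$. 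Relabelling the constant $C$ gives~\eqref{eq:ctrl-phi}, the factor $t$ coming exactly from the Cauchy--Schwarz step.

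The step requiring real care, and the main obstacle, is the legitimacy of applying the a priori estimates of Lemma~\ref{lem:exist-m-kolmogorov} to $\psi^m = \partial_t\varphi^m$: for a generic $\varphi_0 \in \mathcal{V}$ the element $\mathcal{L}^m\varphi_0$ need not lie in $\mathcal{V}$, so $\psi^m$ is not literally one of the solutions constructed there, and the underlying energy computation cannot be invoked verbatim because the required regularity of the trajectory is not available. I would handle this exactly as Lemma~\ref{lem:exist-m-kolmogorov} itself is proved: work first with the regularised generator $\mathcal{L}^{m,h} = \mathcal{L}_\theta + \mathcal{G}^{m,h}$, for which $\mathcal{G}^{m,h}$ is bounded so that $\partial_t\varphi^{m,h} = \mathcal{L}^{m,h}\varphi^{m,h}$ is again a classical solution of the $(m,h)$-equation with sufficiently regular initial datum $\mathcal{L}^{m,h}\varphi_0$; run the energy argument of Lemma~\ref{lem:exist-m-kolmogorov} for $\partial_t\varphi^{m,h}$ together with the time-integration argument above, obtaining bounds uniform in $h$ (and $m$); and finally let $h \to 0$ along the subsequence fixed in Lemma~\ref{lem:exist-m-kolmogorov}, using lower semicontinuity of the norms and $\mathcal{L}^{m,h}\varphi_0 \to \mathcal{L}^m\varphi_0$. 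Alternatively one proves the corollary first for $\varphi_0$ in the dense subset $\{\varphi_0 \in \mathcal{V} : \mathcal{L}^m\varphi_0 \in \mathcal{V}\}$ and extends by continuity using the dependence of $\varphi^m$ on $\varphi_0$ supplied by Lemma~\ref{lem:exist-m-kolmogorov}.
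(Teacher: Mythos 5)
Your proposal is correct and follows essentially the same route as the paper: both use the commutation $\partial_t T^m_t\varphi_0 = T^m_t\mathcal{L}^m\varphi_0$ from Lemma~\ref{eq:lemma-lm} to transfer the a priori estimates of Lemma~\ref{lem:exist-m-kolmogorov} to the initial datum $\mathcal{L}^m\varphi_0$, and then obtain \eqref{eq:ctrl-phi} by writing $\varphi^m(t)-\varphi_0$ as a time integral, applying Cauchy--Schwarz in $s$ and inserting $e^{tC}e^{-sC}\geqslant 1$. Your extra paragraph on justifying the application of those estimates to $\mathcal{L}^m\varphi_0\notin\mathcal{V}$ addresses a point the paper passes over silently, and your proposed remedies (the $h$-regularisation or a density argument) are adequate.
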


\begin{proof}
  Recall $T^m_t \varphi^m_0 = \varphi^m (t)$. We already know
  \[ e^{- t C} \| (1 +\mathcal{N})^{\alpha} T^m_t \varphi^m_0 \|^2 +
     \int_0^{\infty} e^{- s C} \| (1 +\mathcal{N})^{\alpha} (1
     -\mathcal{L}_{\theta})^{1 / 2} T^m_s \varphi^m_0 \|^2 \mathd s \lesssim
     \| (1 +\mathcal{N})^{\alpha} \varphi_0 \|^2, \]
  which yields
  \begin{eqnarray*}
    \int_0^{\infty} e^{- t C} \| (1 +\mathcal{N})^{\alpha} (1
    -\mathcal{L}_{\theta})^{1 / 2} \partial_t T^m_t \varphi^m_0 \|^2 \mathd t
    & = & \int_0^{\infty} e^{- t C} \| (1 +\mathcal{N})^{\alpha} (1
    -\mathcal{L}_{\theta})^{1 / 2} T^m_t \mathcal{L}^m \varphi_0 \|^2 \mathd
    t\\
    & \lesssim & \| (1 +\mathcal{N})^{\alpha} \mathcal{L}^m \varphi_0 \|^2,
  \end{eqnarray*}
  and
  \[ \begin{array}{ll}
       & \| (1 +\mathcal{N})^{\alpha} (1 -\mathcal{L}_{\theta})^{1 / 2} T^m_t
       \varphi_0 \|^2\\
       \lesssim & \left\| \int_0^t (1 +\mathcal{N})^{\alpha} (1
       -\mathcal{L}_{\theta})^{1 / 2} \partial_s T^m_s \varphi^m_0 \mathd s
       \right\|^2 + \| (1 +\mathcal{N})^{\alpha} (1 -\mathcal{L}_{\theta})^{1
       / 2} \varphi_0 \|^2\\
       \leqslant & t \int_0^t \| (1 +\mathcal{N})^{\alpha} (1
       -\mathcal{L}_{\theta})^{1 / 2} \partial_s T^m_s \varphi_0 \|^2 \mathd s
       + \| (1 +\mathcal{N})^{\alpha} (1 -\mathcal{L}_{\theta})^{1 / 2}
       \varphi_0 \|^2\\
       \leqslant & t e^{t C} \int_0^t e^{- s C} \| (1 +\mathcal{N})^{\alpha}
       (1 -\mathcal{L}_{\theta})^{1 / 2} \partial_s T^m_s \varphi_0 \|^2
       \mathd s + \| (1 +\mathcal{N})^{\alpha} (1 -\mathcal{L}_{\theta})^{1 /
       2} \varphi_0 \|^2\\
       \lesssim & t e^{t C} \| (1 +\mathcal{N})^{\alpha} \mathcal{L}^m
       \varphi_0 \|^2 + \| (1 +\mathcal{N})^{\alpha} (1
       -\mathcal{L}_{\theta})^{1 / 2} \varphi_0 \|^2,
     \end{array} \]
  which is what claimed.
\end{proof}

\subsection{Controlled structures}

The a priori bounds~{\eqref{eq:ctrl-dtphi}} and~{\eqref{eq:ctrl-phi}} bring us
in position to control $\| \varphi^m (t) \|$, $\| \partial_t \varphi^m (t)
\|$, and $\| \mathcal{L}^m \varphi^m (t) \|$ uniformly in $m$ and locally
uniformly in~$t$, but in order to study the limiting Kolmogorov backward
equation we have first to deal with the limiting operator~$\mathcal{L}$ and to
define a domain~$\mathcal{D} (\mathcal{L})$.

To take care of the term $\mathcal{G}$ in the limiting operator $\mathcal{L}$,
we decompose it by means of a cut-off function $\mathcal{M}= M (\mathcal{N})$
as follows
\[ \mathcal{G}^m = \mathbbm{1}_{| \mathcal{L}_{\theta} | \geqslant
   \mathcal{M}} \mathcal{G}^m + \mathbbm{1}_{| \mathcal{L}_{\theta} |
   <\mathcal{M}} \mathcal{G}^m \backassign \mathcal{G}^{m, \succ}
   +\mathcal{G}^{m, \prec} . \]
We then set
\begin{equation}
  \varphi^{m, \sharp} \assign \varphi^m - (1 -\mathcal{L}_{\theta})^{- 1}
  \mathcal{G}^{m, \succ} \varphi^m, \label{eq:phimsharp}
\end{equation}
so that
\begin{equation}
  (1 -\mathcal{L}^m) \varphi^m = (1 -\mathcal{L}_{\theta}) \varphi^{m, \sharp}
  +\mathcal{G}^{m, \prec} \varphi^m . \label{eq:act-Lm}
\end{equation}
\begin{lemma}
  \label{lem:exist-controlled}Let $w$ be a weight, $L \geqslant 1$,
  $\bar{\varepsilon} \in] 0, (\theta - 1) / (2 \theta) [$ and $M (n) = L (n +
  1)^{3 \theta / (\theta - 1 - 2 \theta \varepsilon)}$. Then we have
  \begin{equation}
    \| w (\mathcal{N}) (1 -\mathcal{L}_{\theta})^{- 1 / 2} \mathcal{G}^{m,
    \succ} \psi \| \lesssim | w | L^{- \frac{(\theta - 1)}{2 \theta} +
    \bar{\varepsilon}} \| w (\mathcal{N}) (1 -\mathcal{L}_{\theta})^{1 / 2}
    \psi \| . \label{eq:est-G-maj}
  \end{equation}
  Consequently, there exists $L_0 = L_0 (| w |)$ such that, for all $L
  \geqslant L_0$ and all $\varphi^{\sharp} \in w (\mathcal{N})^{- 1} (1
  -\mathcal{L}_{\theta})^{- 1 / 2} \mathcal{H}$, there is a unique $\varphi^m
  =\mathcal{K}^m \varphi^{\sharp}$ such that
  \[ \varphi^m = (1 -\mathcal{L}_{\theta})^{- 1} \mathcal{G}^{m, \succ}
     \varphi^m + \varphi^{\sharp} \in w (\mathcal{N})^{- 1} (1
     -\mathcal{L}_{\theta})^{- 1 / 2} \mathcal{H}, \]
  which satisfies the bound
  \begin{equation}
    \begin{array}{ll}
      & \| w (\mathcal{N}) (1 -\mathcal{L}_{\theta})^{1 / 2} \mathcal{K}^m
      \varphi^{\sharp} \| + | w |^{- 1} L^{(\theta - 1) / (2 \theta) -
      \bar{\varepsilon}} \| w (\mathcal{N}) (1 -\mathcal{L}_{\theta})^{1 / 2}
      (\mathcal{K}^m \varphi^{\sharp} - \varphi^{\sharp}) \|\\
      \lesssim & \| w (\mathcal{N}) (1 -\mathcal{L}_{\theta})^{1 / 2}
      \varphi^{\sharp} \| .
    \end{array} \label{eq:controlled-est}
  \end{equation}
  All the estimates are uniform in $m$ and true in the limit $m \rightarrow
  \infty$. We denote $\mathcal{K}=\mathcal{K}^{\infty}$.
\end{lemma}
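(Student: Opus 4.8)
The plan is to derive \eqref{eq:est-G-maj} from the uniform bounds \eqref{eq:unifbdG1}--\eqref{eq:unifbdG2} of Lemma~\ref{lem:uniform-bds-G} together with the elementary operator inequality
\[ \mathbbm{1}_{|\mathcal{L}_\theta| \geqslant \mathcal{M}} \, (1-\mathcal{L}_\theta)^{s} \;\leqslant\; M(\mathcal{N})^{s}, \qquad s \leqslant 0, \]
valid because $-\mathcal{L}_\theta \geqslant 0$ and $1 - \mathcal{L}_\theta \geqslant \mathcal{M} = M(\mathcal{N})$ on the range of $\mathbbm{1}_{|\mathcal{L}_\theta|\geqslant\mathcal{M}}$ (and $\mathcal{N}$, $\mathcal{L}_\theta$ commute); then I would close \eqref{eq:controlled-est} by a Neumann series on the weighted space $X := w(\mathcal{N})^{-1}(1-\mathcal{L}_\theta)^{-1/2}\mathcal{H}$ endowed with $\|\Phi\|_X := \|w(\mathcal{N})(1-\mathcal{L}_\theta)^{1/2}\Phi\|$.

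For \eqref{eq:est-G-maj} I would split $\mathcal{G}^{m,\succ} = \mathbbm{1}_{|\mathcal{L}_\theta|\geqslant\mathcal{M}}(\mathcal{G}_+^m + \mathcal{G}_-^m)$ and treat the two summands separately, fixing the exponent $\gamma := \tfrac{1}{2\theta} + \bar\varepsilon$, which by the hypothesis $0 < \bar\varepsilon < (\theta-1)/(2\theta)$ lies in $(\tfrac{1}{2\theta}, \tfrac12)$, so that both \eqref{eq:unifbdG1} and \eqref{eq:unifbdG2} apply with this $\gamma$. Since $\gamma - \tfrac12 < 0$, the above inequality gives $(1-\mathcal{L}_\theta)^{-1/2}\mathbbm{1}_{|\mathcal{L}_\theta|\geqslant\mathcal{M}} = (1-\mathcal{L}_\theta)^{-\gamma}\bigl[(1-\mathcal{L}_\theta)^{\gamma - 1/2}\mathbbm{1}_{|\mathcal{L}_\theta|\geqslant\mathcal{M}}\bigr] \leqslant M(\mathcal{N})^{\gamma-1/2}(1-\mathcal{L}_\theta)^{-\gamma}$, and I would then invoke Lemma~\ref{lem:uniform-bds-G} with $w(\cdot)M(\cdot)^{\gamma-1/2}$ in place of the arbitrary function $w$ there. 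The crucial arithmetic is that, with $M(n) \simeq L(n+1)^{3\theta/(\theta-1-2\theta\bar\varepsilon)}$ and $\gamma - \tfrac12 = -\tfrac{\theta-1-2\theta\bar\varepsilon}{2\theta}$, one has $(\gamma-\tfrac12)\cdot\tfrac{3\theta}{\theta-1-2\theta\bar\varepsilon} = -\tfrac32$ and $L^{\gamma-1/2} = L^{-(\theta-1)/(2\theta)+\bar\varepsilon}$: the $\mathcal{N}$-weight on the right-hand side of \eqref{eq:unifbdG1}--\eqref{eq:unifbdG2} then collapses, since $M(\mathcal{N}+1)^{\gamma-1/2} \simeq L^{\gamma-1/2}(1+\mathcal{N})^{-3/2}$ turns the factor $(1+\mathcal{N})$ in \eqref{eq:unifbdG1} into $(1+\mathcal{N})^{-1/2}\leqslant 1$, while $M(\mathcal{N}-1)^{\gamma-1/2} \simeq L^{\gamma-1/2}\mathcal{N}^{-3/2}$ exactly cancels the $\mathcal{N}^{3/2}$ in \eqref{eq:unifbdG2}; the residual exponent of $1-\mathcal{L}_\theta$ is $\tfrac{1+1/\theta}{2}-\gamma = \tfrac12-\bar\varepsilon \leqslant \tfrac12$, and the prefactor is precisely $L^{-(\theta-1)/(2\theta)+\bar\varepsilon}$. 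Finally $w(\mathcal{N}+1) \lesssim |w|\,w(\mathcal{N})$ (the defining inequality of a weight) and $w(\mathcal{N}-1) \leqslant w(\mathcal{N})$ (monotonicity) produce the factor $|w|$, and as the constants in Lemma~\ref{lem:uniform-bds-G} are $m$-independent and hold in the limit, so does \eqref{eq:est-G-maj}.

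Granting \eqref{eq:est-G-maj}, set $\mathcal{T}^m\Phi := (1-\mathcal{L}_\theta)^{-1}\mathcal{G}^{m,\succ}\Phi$; then \eqref{eq:est-G-maj} reads exactly $\|\mathcal{T}^m\Phi\|_X \leqslant C\,|w|\,L^{-(\theta-1)/(2\theta)+\bar\varepsilon}\,\|\Phi\|_X$. Since the exponent of $L$ is negative, choosing $L_0 = L_0(|w|)$ with $C\,|w|\,L_0^{-(\theta-1)/(2\theta)+\bar\varepsilon} \leqslant \tfrac12$ makes $\mathcal{T}^m$ a contraction of norm $\leqslant \tfrac12$ on $X$ for every $L \geqslant L_0$, uniformly in $m$. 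Hence $\mathrm{Id}-\mathcal{T}^m$ is boundedly invertible on $X$ and $\mathcal{K}^m\varphi^\sharp := (\mathrm{Id}-\mathcal{T}^m)^{-1}\varphi^\sharp = \sum_{k\geqslant 0}(\mathcal{T}^m)^k\varphi^\sharp$ is the unique element of $X$ solving $\varphi^m = (1-\mathcal{L}_\theta)^{-1}\mathcal{G}^{m,\succ}\varphi^m + \varphi^\sharp$. Then $\|\mathcal{K}^m\varphi^\sharp\|_X \leqslant (1-\|\mathcal{T}^m\|_X)^{-1}\|\varphi^\sharp\|_X \leqslant 2\|\varphi^\sharp\|_X$ gives the first term of \eqref{eq:controlled-est}, and the identity $\mathcal{K}^m\varphi^\sharp - \varphi^\sharp = \mathcal{T}^m(\mathcal{K}^m\varphi^\sharp)$ together with the contraction bound gives $\|\mathcal{K}^m\varphi^\sharp - \varphi^\sharp\|_X \lesssim |w|\,L^{-(\theta-1)/(2\theta)+\bar\varepsilon}\|\varphi^\sharp\|_X$, the second term. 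All constants are inherited from Lemma~\ref{lem:uniform-bds-G}, hence uniform in $m$ and stable under $m\to\infty$, which defines $\mathcal{K} = \mathcal{K}^\infty$ and the $m=\infty$ version of all the estimates.

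The main, and essentially only nontrivial, obstacle is the bookkeeping behind \eqref{eq:est-G-maj}: one has to verify that the polynomial rate $3\theta/(\theta-1-2\theta\bar\varepsilon)$ hard-wired into $M(n)$ is exactly the one for which the trade-off works, i.e. that after converting the negative power $(1-\mathcal{L}_\theta)^{\gamma-1/2}$ gained from the high-frequency cut-off into the multiplier $M(\mathcal{N})^{\gamma-1/2}$, the leftover power of $\mathcal{N}$ is non-positive (so it may be discarded) while the leftover power of $1-\mathcal{L}_\theta$ does not exceed $\tfrac12$. This is precisely where the range $\bar\varepsilon \in (0,(\theta-1)/(2\theta))$ — equivalently $\gamma \in (\tfrac{1}{2\theta}, \tfrac12)$ — is used, simultaneously to kill the $\mathcal{N}$-weights coming from $\mathcal{G}_\pm^m$ and to make the exponent of $L$ negative so that enlarging $L$ genuinely improves the bound.
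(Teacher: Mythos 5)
Your argument is correct and follows essentially the same route as the paper: convert the high-frequency cut-off into a negative power of $M(\mathcal{N})$, feed the modified weight into the uniform bounds \eqref{eq:unifbdG1}--\eqref{eq:unifbdG2} so that the exponent $3\theta/(\theta-1-2\theta\bar\varepsilon)$ cancels the $(1+\mathcal{N})$ and $\mathcal{N}^{3/2}$ factors and produces the prefactor $L^{-(\theta-1)/(2\theta)+\bar\varepsilon}$, then close by a contraction/Neumann-series argument on $w(\mathcal{N})^{-1}(1-\mathcal{L}_\theta)^{-1/2}\mathcal{H}$. The only cosmetic differences are that the paper takes $\gamma=1/(2\theta)$ for $\mathcal{G}_-^{m,\succ}$ (rather than your common $\gamma=1/(2\theta)+\bar\varepsilon$, which also works) and phrases the inversion as a Banach fixed point for the affine map rather than a Neumann series.
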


\begin{proof}
  We start with the estimate on~$\mathcal{G}_+^{m, \succ}$. We have, for
  $\varepsilon \in] 0, 1 / 2 - 1 / (2 \theta) [$, using
  Lemma~\ref{lem:uniform-bds-G},
  \begin{eqnarray*}
    \| w (\mathcal{N}) (1 -\mathcal{L}_{\theta})^{- 1 / 2} \mathcal{G}_+^{m,
    \succ} \psi \| & \lesssim & \| w (\mathcal{N}) (1
    -\mathcal{L}_{\theta})^{- 1 / 2} \mathbbm{1}_{| \mathcal{L}_{\theta} |
    \geqslant M (\mathcal{N})}^{} \mathcal{G}_+^m \psi \|\\
    & \lesssim & \| w (\mathcal{N}) M (\mathcal{N})^{- 1 / 2 + 1 / (2 \theta)
    + \varepsilon} (1 -\mathcal{L}_{\theta})^{- 1 / (2 \theta) - \varepsilon}
    \mathcal{G}_+^m \psi \|\\
    & \lesssim & \| w (\mathcal{N}+ 1) M (\mathcal{N}+ 1)^{- 1 / 2 + 1 / (2
    \theta) + \varepsilon} (\mathcal{N}+ 1) (1 -\mathcal{L}_{\theta})^{1 / 2 -
    \varepsilon} \psi \| .
  \end{eqnarray*}
  The bound on $\mathcal{G}_-^{m, \succ}$ can be obtained using again
  Lemma~\ref{lem:uniform-bds-G}:
  \begin{eqnarray*}
    \| w (\mathcal{N}) (1 -\mathcal{L}_{\theta})^{- 1 / 2} \mathcal{G}_-^{m,
    \succ} \psi \| & \lesssim & \| w (\mathcal{N}) (1
    -\mathcal{L}_{\theta})^{- 1 / 2} \mathbbm{1}_{| \mathcal{L}_{\theta} |
    \geqslant M (\mathcal{N})}^{} \mathcal{G}_-^m \psi \|\\
    & \lesssim & \| w (\mathcal{N}) M (\mathcal{N})^{- 1 / 2 + 1 / (2
    \theta)} (1 -\mathcal{L}_{\theta})^{- 1 / (2 \theta)} \mathcal{G}_-^m \psi
    \|\\
    & \lesssim & \| w (\mathcal{N}- 1) M (\mathcal{N}- 1)^{- 1 / 2 + 1 / (2
    \theta)} \mathcal{N}^{3 / 2} (1 -\mathcal{L}_{\theta})^{1 / 2} \psi \| .
  \end{eqnarray*}
  In conclusion, for $\varepsilon \in] 0, (\theta - 1) / (2 \theta) [$,
  choosing $M (n) = L (n + 1)^{3 \theta / (\theta - 1 - 2 \theta
  \varepsilon)}$, for $L \geqslant 1$,
  \[ \| w (\mathcal{N}) (1 -\mathcal{L}_{\theta})^{- 1 / 2} \mathcal{G}^{m,
     \succ} \psi \| \lesssim L^{- 1 / 2 + 1 / (2 \theta) + \varepsilon} | w |
     \| w (\mathcal{N}) (1 -\mathcal{L}_{\theta})^{1 / 2} \psi \| . \]
  Now let $\varphi^{\sharp} \in w (\mathcal{N})^{- 1} (1
  -\mathcal{L}_{\theta})^{- 1 / 2} \mathcal{H}$, the map
  \[ \begin{array}{c}
       \Psi^m : w (\mathcal{N})^{- 1} (1 -\mathcal{L}_{\theta})^{- 1 / 2}
       \mathcal{H} \rightarrow w (\mathcal{N})^{- 1} (1
       -\mathcal{L}_{\theta})^{- 1 / 2} \mathcal{H},\\
       \psi \mapsto \Psi^m (\psi) \assign (1 -\mathcal{L}_{\theta})^{- 1}
       \mathcal{G}^{m, \succ} \psi + \varphi^{\sharp},
     \end{array} \]
  satisfies, for some positive constant~$C$,
  \begin{eqnarray*}
    \| w (\mathcal{N}) (1 -\mathcal{L}_{\theta})^{1 / 2} \Psi^m (\psi) \| &
    \leqslant & \| w (\mathcal{N}) (1 -\mathcal{L}_{\theta})^{- 1 / 2}
    \mathcal{G}^{m, \succ} \psi \| + \| w (\mathcal{N}) (1
    -\mathcal{L}_{\theta})^{1 / 2} \varphi^{\sharp} \|\\
    & \leqslant & C L^{- 1 / 2 + 1 / (2 \theta) + \varepsilon} | w | \| w
    (\mathcal{N}) (1 -\mathcal{L}_{\theta})^{1 / 2} \psi \| + \| w
    (\mathcal{N}) (1 -\mathcal{L}_{\theta})^{1 / 2} \varphi^{\sharp} \| .
  \end{eqnarray*}
  Namely, $\Psi^m$ is well-defined and, choosing $L$ large enough, it is a
  contraction leaving the ball of radius $2 \| w (\mathcal{N}) (1
  -\mathcal{L}_{\theta})^{1 / 2} \varphi^{\sharp} \|$ invariant. Therefore, it
  has a unique fixed point $\mathcal{K}^m \varphi^{\sharp}$ satisfying the
  claimed inequalities.
\end{proof}

\begin{remark}
  In the previous lemma, the cut-off $M (n)$ depends via $| w |$ on the
  weight~$w$. In the following we will only use \tmtextit{polynomial weights}
  of the form $w (n) = (1 + n)^{\alpha}$ with $| \alpha | \leqslant K$ for a
  fixed $K$. In this case $| w |$ is uniformly bounded and it is possible to
  select a cut-off which is adapted to all those weights. This will be fixed
  once and for all and not discussed further.
\end{remark}

\begin{proposition}
  \label{prop:well-def-gen}Let $w$ be a polynomial weight, $\gamma \geqslant
  0$, $\bar{\varepsilon}$ as in Lemma~\ref{lem:exist-controlled},
  \[ \alpha (\gamma) = \frac{\theta (6 \gamma + 5) - 2}{2 (\theta - 1)} . \]
  Let
  \[ \varphi^{\sharp} \in w (\mathcal{N})^{- 1} (1 -\mathcal{L}_{\theta})^{-
     1} \mathcal{H} \cap w (\mathcal{N})^{- 1} (1 +\mathcal{N})^{- \alpha
     (\gamma)} (1 -\mathcal{L}_{\theta})^{- 1 / 2} \mathcal{H}, \]
  and set $\varphi^m \assign \mathcal{K}^m \varphi^{\sharp}$. Then
  $\mathcal{L}^m \varphi^m$ is a well-defined operator and we have the bound
  \begin{equation}
    \| w (\mathcal{N}) (1 -\mathcal{L}_{\theta})^{\gamma} \mathcal{G}^{m,
    \prec} \varphi^m \| \lesssim \| w (\mathcal{N}) (1 +\mathcal{N})^{\alpha
    (\gamma)} (1 -\mathcal{L}_{\theta})^{1 / 2} \varphi^{\sharp} \| .
    \label{eq:est-G-minor}
  \end{equation}
\end{proposition}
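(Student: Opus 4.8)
The strategy is to convert the frequency cut-off built into $\mathcal{G}^{m,\prec}=\mathbbm{1}_{|\mathcal{L}_\theta|<\mathcal{M}}\mathcal{G}^m$ into a gain of $\mathcal{N}$-regularity, and then to reduce to the uniform bounds of Lemma~\ref{lem:uniform-bds-G} and the controlled-structure estimate \eqref{eq:controlled-est}. By construction $|\mathcal{L}_\theta|<\mathcal{M}=M(\mathcal{N})$ on the range of $\mathcal{G}^{m,\prec}$, so on the $n$-particle sector and for any $\delta\ge0$ one has $(1-\mathcal{L}_\theta)^{\delta}\,\mathbbm{1}_{|\mathcal{L}_\theta|<M(n)}\le(1+M(n))^{\delta}\lesssim M(n)^{\delta}$ (as $M(n)\ge L\ge1$). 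Writing $\gamma=(\gamma+\gamma')-\gamma'$ with an auxiliary exponent $\gamma'$, discarding the contractive projection, and absorbing the factor $M(\mathcal{N})^{\gamma+\gamma'}$ into the weight, this gives
\[
  \|w(\mathcal{N})(1-\mathcal{L}_\theta)^{\gamma}\mathcal{G}^{m,\prec}\varphi^m\|
  \lesssim \|\tilde w(\mathcal{N})(1-\mathcal{L}_\theta)^{-\gamma'}\mathcal{G}^{m}\varphi^m\|,
  \qquad \tilde w(\mathcal{N}):=w(\mathcal{N})M(\mathcal{N})^{\gamma+\gamma'},
\]
where $\tilde w$ is again a polynomial weight (since $M$ is polynomial), of degree controlled by that of $w$ and by $\gamma,\gamma',\theta,\bar\varepsilon$.

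Now I apply the uniform bounds \eqref{eq:unifbdG1}--\eqref{eq:unifbdG2} to $\mathcal{G}^m=\mathcal{G}^m_++\mathcal{G}^m_-$ with weight $\tilde w$; this requires exactly $\gamma'\in(\tfrac1{2\theta},\tfrac12)$, a non-empty window because $\theta>1$. Using $\tilde w(\mathcal{N}\pm1)\simeq\tilde w(\mathcal{N})$ and $(1+\mathcal{N})\vee\mathcal{N}^{3/2}\le(1+\mathcal{N})^{3/2}$ to merge the two contributions,
\[
  \|w(\mathcal{N})(1-\mathcal{L}_\theta)^{\gamma}\mathcal{G}^{m,\prec}\varphi^m\|
  \lesssim \|\tilde w(\mathcal{N})(1+\mathcal{N})^{3/2}(1-\mathcal{L}_\theta)^{(1+1/\theta)/2-\gamma'}\varphi^m\|.
\]
Since $\gamma'>\tfrac1{2\theta}$, the exponent $(1+1/\theta)/2-\gamma'$ is $<\tfrac12$, so that power of $1-\mathcal{L}_\theta$ may be bounded by $(1-\mathcal{L}_\theta)^{1/2}$; and $M(\mathcal{N})^{\gamma+\gamma'}\lesssim(1+\mathcal{N})^{3\theta(\gamma+\gamma')/(\theta-1-2\theta\bar\varepsilon)}$. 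Since $\varphi^m=\mathcal{K}^m\varphi^\sharp$, inserting the controlled-structure bound \eqref{eq:controlled-est} of Lemma~\ref{lem:exist-controlled} with the polynomial weight $w(\mathcal{N})(1+\mathcal{N})^{\beta}$, where $\beta:=\tfrac32+\tfrac{3\theta(\gamma+\gamma')}{\theta-1-2\theta\bar\varepsilon}$, replaces $\varphi^m$ by $\varphi^\sharp$ and yields
\[
  \|w(\mathcal{N})(1-\mathcal{L}_\theta)^{\gamma}\mathcal{G}^{m,\prec}\varphi^m\|
  \lesssim \|w(\mathcal{N})(1+\mathcal{N})^{\beta}(1-\mathcal{L}_\theta)^{1/2}\varphi^\sharp\|.
\]
The cut-off being fixed once and for all, per the Remark after Lemma~\ref{lem:exist-controlled}, makes all this uniform over the polynomial weights involved, over $m$, and valid as $m\to\infty$.

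It remains to check the bookkeeping $\beta\le\alpha(\gamma)$. Clearing the positive denominator $2(\theta-1)$, in the limit $\bar\varepsilon\to0$ the required inequality $\tfrac32+\tfrac{3\theta(\gamma+\gamma')}{\theta-1}\le\tfrac{\theta(6\gamma+5)-2}{2(\theta-1)}$ collapses to $6\theta\gamma'\le2\theta+1$, i.e.\ $\gamma'\le\tfrac13+\tfrac1{6\theta}$. As $\tfrac1{2\theta}<\tfrac13+\tfrac1{6\theta}<\tfrac12$ whenever $\theta>1$, one first fixes $\gamma'\in(\tfrac1{2\theta},\tfrac13+\tfrac1{6\theta})$ so that the inequality is strict at $\bar\varepsilon=0$, and then chooses $\bar\varepsilon>0$ small enough — admissible since $\bar\varepsilon$ ranges over $(0,(\theta-1)/(2\theta))$ — for $\beta\le\alpha(\gamma)$ to persist. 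Because $\alpha(\gamma)\ge\beta\ge0$ gives $(1+\mathcal{N})^{-\alpha(\gamma)}\mathcal{H}\subseteq(1+\mathcal{N})^{-\beta}\mathcal{H}$, the hypothesis $\varphi^\sharp\in w(\mathcal{N})^{-1}(1+\mathcal{N})^{-\alpha(\gamma)}(1-\mathcal{L}_\theta)^{-1/2}\mathcal{H}$ supplies exactly the norm on the right-hand side above, which is \eqref{eq:est-G-minor}. Finally, $\mathcal{L}^m\varphi^m$ is a well-defined operator because of \eqref{eq:act-Lm}: the term $(1-\mathcal{L}_\theta)\varphi^{m,\sharp}=(1-\mathcal{L}_\theta)\varphi^\sharp$ is well-defined by the other half of the hypothesis, $\varphi^\sharp\in w(\mathcal{N})^{-1}(1-\mathcal{L}_\theta)^{-1}\mathcal{H}$, and $\mathcal{G}^{m,\prec}\varphi^m$ by the $\gamma=0$ instance of the estimate just proved.

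The genuinely delicate point — and the only real obstacle — is this exponent accounting. The two uniform bounds pin $\gamma'$ into the narrow interval $(\tfrac1{2\theta},\tfrac12)$, while simultaneously the total power of $1+\mathcal{N}$ picked up, coming from $M(\mathcal{N})^{\gamma+\gamma'}$ together with the $\mathcal{N}^{3/2}$ of \eqref{eq:unifbdG2} and the $1+\mathcal{N}$ of \eqref{eq:unifbdG1}, must stay below $\alpha(\gamma)$; both are compatible only because $\theta>1$ (yielding the slack $\tfrac1{2\theta}<\tfrac13+\tfrac1{6\theta}$) and because $\bar\varepsilon$ in the definition of $M$ can be taken arbitrarily small.
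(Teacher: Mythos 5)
Your argument is correct and follows essentially the same route as the paper: trade the spectral cut-off $\mathbbm{1}_{|\mathcal{L}_\theta|<M(\mathcal{N})}$ for powers of $M(\mathcal{N})$, apply the uniform bounds \eqref{eq:unifbdG1}--\eqref{eq:unifbdG2} of Lemma~\ref{lem:uniform-bds-G}, and then use \eqref{eq:controlled-est} to replace $\varphi^m$ by $\varphi^{\sharp}$. The only (harmless) difference is that you use a single auxiliary exponent $\gamma'\in(\tfrac1{2\theta},\tfrac13+\tfrac1{6\theta})$ for both $\mathcal{G}_{\pm}^m$ where the paper uses separate exponents, and your explicit verification that the resulting power of $1+\mathcal{N}$ stays below $\alpha(\gamma)$ fills in a step the paper leaves implicit.
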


\begin{proof}
  By eq.~{\eqref{eq:act-Lm}} we need only to estimate $\mathcal{G}^{m, \prec}
  \varphi^m$. We first deal with $\mathcal{G}_+^{m, \prec}$: we have
  by~{\eqref{eq:unifbdG1}}, for $\delta < 1 / 2 - 1 / (2 \theta)$,
  \begin{eqnarray*}
    \| w (\mathcal{N}) (1 -\mathcal{L}_{\theta})^{\gamma} \mathcal{G}_+^{m,
    \prec} \varphi^m \| & = & \| w (\mathcal{N}) (1
    -\mathcal{L}_{\theta})^{\gamma} \mathbbm{1}_{| \mathcal{L}_{\theta} | < M
    (\mathcal{N})} \mathcal{G}_+^m \varphi^m \|\\
    & \lesssim & \| w (\mathcal{N}) M (\mathcal{N})^{\gamma + 1 / 2 - \delta}
    (1 -\mathcal{L}_{\theta})^{- 1 / 2 + \delta} \mathcal{G}_+^m \varphi^m
    \|\\
    & \lesssim & \| w (\mathcal{N}+ 1) M (\mathcal{N}+ 1)^{\gamma + 1 / 2}
    (\mathcal{N}+ 1) (1 -\mathcal{L}_{\theta})^{1 / (2 \theta) + \delta}
    \varphi^m \| .
  \end{eqnarray*}
  For $\mathcal{G}_-^{m, \prec}$, it follows in a similar way from
  estimate~{\eqref{eq:unifbdG2}} that, for every $\delta \in] 0, 1 / (2
  \theta)]$,
  \begin{eqnarray*}
    \| w (\mathcal{N}) (1 -\mathcal{L}_{\theta})^{\gamma} \mathcal{G}_-^{m,
    \prec} \varphi^m \| & = & \| w (\mathcal{N}) (1
    -\mathcal{L}_{\theta})^{\gamma} \mathbbm{1}_{| \mathcal{L}_{\theta} | < M
    (\mathcal{N})} \mathcal{G}_-^m \varphi^m \|\\
    & \lesssim & \| w (\mathcal{N}) M (\mathcal{N})^{\gamma + 1 / (2 \theta)}
    (1 -\mathcal{L}_{\theta})^{- 1 / (2 \theta)} \mathcal{G}_-^m \varphi^m
    \|\\
    & \lesssim & \| w (\mathcal{N}- 1) M (\mathcal{N}- 1)^{\gamma + 1 / (2
    \theta)} \mathcal{N}^{3 / 2} (1 -\mathcal{L}_{\theta})^{1 / 2} \varphi^m
    \| .
  \end{eqnarray*}
  These bounds and the definition of $M (n)$ give the claimed bound
  on~$\mathcal{G}^{m, \prec}$.
\end{proof}

\subsection{Limiting generator and its domain}

\begin{lemma}
  \label{lem:density}Let $w$ be a weight and take a cut-off function as in
  Proposition~\ref{prop:well-def-gen} with $\gamma = 0$. Set
  \[ \mathcal{D}_w (\mathcal{L}) \assign \{ \mathcal{K} \varphi^{\sharp} :
     \varphi^{\sharp} \in w (\mathcal{N})^{- 1} (1 -\mathcal{L}_{\theta})^{-
     1} \mathcal{H} \cap w (\mathcal{N})^{- 1} (\mathcal{N}+ 1)^{- \alpha (0)}
     (1 -\mathcal{L}_{\theta})^{- 1 / 2} \mathcal{H} \} . \]
  Then $\mathcal{D}_w (\mathcal{L})$ is dense in $w (\mathcal{N})^{- 1}
  \mathcal{H}$. If $w \equiv 1$ we simply write $\mathcal{D} (\mathcal{L})$.
\end{lemma}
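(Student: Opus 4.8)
The plan is to prove density first in the \emph{smaller} Hilbert space
\[ \mathcal{X}_w \assign w (\mathcal{N})^{- 1} (1 -\mathcal{L}_{\theta})^{- 1 / 2} \mathcal{H}, \qquad \| \psi \|_{\mathcal{X}_w} \assign \| w (\mathcal{N}) (1 -\mathcal{L}_{\theta})^{1 / 2} \psi \|, \]
on which $\mathcal{K}$ is a linear homeomorphism, and then to transfer this to $w (\mathcal{N})^{- 1} \mathcal{H}$. Since $(1 -\mathcal{L}_{\theta})^{1 / 2} \geqslant 1$ and $w (\mathcal{N}) \geqslant w (0) > 0$, both $\mathcal{X}_w$ and $w (\mathcal{N})^{- 1} \mathcal{H}$ embed into $\mathcal{H}$, and the inclusion $\mathcal{X}_w \hookrightarrow w (\mathcal{N})^{- 1} \mathcal{H}$ is a contraction. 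Throughout, for $N \in \mathbb{N}$ I denote by $\Pi_N \assign \mathbbm{1}_{\mathcal{N} \leqslant N} \mathbbm{1}_{| \mathcal{L}_{\theta} | \leqslant N}$ the self-adjoint projection on $\mathcal{H}$; since $\mathcal{N}$ and $\mathcal{L}_{\theta}$ act diagonally in the Fourier--chaos basis (cf.~{\eqref{eq:ltheta-fourier}} and $(\mathcal{N} \varphi)_n = n \varphi_n$), $\Pi_N$ commutes with $w (\mathcal{N})$ and with every power of $(1 -\mathcal{L}_{\theta})$, and $\Pi_N \rightarrow \mathrm{Id}$ strongly as $N \rightarrow \infty$.

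\textbf{Step 1: $\mathcal{K}$ is a homeomorphism of $\mathcal{X}_w$.} By Lemma~\ref{lem:exist-controlled}, for $L \geqslant L_0 (| w |)$ the operator $\mathcal{K}=\mathcal{K}^{\infty}$ is bounded on $\mathcal{X}_w$ and, by construction, $\mathcal{K} \varphi^{\sharp} = (1 -\mathcal{L}_{\theta})^{- 1} \mathcal{G}^{\succ} \mathcal{K} \varphi^{\sharp} + \varphi^{\sharp}$ for all $\varphi^{\sharp} \in \mathcal{X}_w$, i.e.~$(\mathrm{Id} - T) \mathcal{K}= \mathrm{Id}$ on $\mathcal{X}_w$ with $T \assign (1 -\mathcal{L}_{\theta})^{- 1} \mathcal{G}^{\succ}$. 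Estimate~{\eqref{eq:est-G-maj}} gives $\| T \|_{\mathcal{X}_w \rightarrow \mathcal{X}_w} = \sup_{\| \psi \|_{\mathcal{X}_w} = 1} \| w (\mathcal{N}) (1 -\mathcal{L}_{\theta})^{- 1 / 2} \mathcal{G}^{\succ} \psi \| \lesssim | w | L^{- (\theta - 1) / (2 \theta) + \bar{\varepsilon}} < 1$ for $L$ large. Hence $\mathrm{Id} - T$ is invertible on $\mathcal{X}_w$ by Neumann series, and from $(\mathrm{Id} - T) \mathcal{K}= \mathrm{Id}$ we get $\mathcal{K}= (\mathrm{Id} - T)^{- 1}$; in particular $\mathcal{K}$ is a linear homeomorphism of $\mathcal{X}_w$ onto itself.

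\textbf{Step 2: the source set is $\mathcal{X}_w$-dense in $\mathcal{X}_w$.} Write $\mathcal{A}_w \assign w (\mathcal{N})^{- 1} (1 -\mathcal{L}_{\theta})^{- 1} \mathcal{H} \cap w (\mathcal{N})^{- 1} (\mathcal{N}+ 1)^{- \alpha (0)} (1 -\mathcal{L}_{\theta})^{- 1 / 2} \mathcal{H}$, so that $\mathcal{D}_w (\mathcal{L}) =\mathcal{K} (\mathcal{A}_w)$. Let $\varphi^{\sharp} \in \mathcal{X}_w$ and put $h \assign w (\mathcal{N}) (1 -\mathcal{L}_{\theta})^{1 / 2} \varphi^{\sharp} \in \mathcal{H}$. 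Since $\Pi_N \varphi^{\sharp}$ is supported on $\{ \mathcal{N} \leqslant N \} \cap \{ | \mathcal{L}_{\theta} | \leqslant N \}$, the operators $(1 -\mathcal{L}_{\theta})^{1 / 2}$ and $(\mathcal{N}+ 1)^{\alpha (0)}$ act on it as bounded multipliers, whence $\| w (\mathcal{N}) (1 -\mathcal{L}_{\theta}) \Pi_N \varphi^{\sharp} \| \leqslant (1 + N)^{1 / 2} \| h \| < \infty$ and $\| w (\mathcal{N}) (\mathcal{N}+ 1)^{\alpha (0)} (1 -\mathcal{L}_{\theta})^{1 / 2} \Pi_N \varphi^{\sharp} \| \leqslant (N + 1)^{\alpha (0)} \| h \| < \infty$ (using $\alpha (0) \geqslant 0$); so $\Pi_N \varphi^{\sharp} \in \mathcal{A}_w$. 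Moreover $\| \varphi^{\sharp} - \Pi_N \varphi^{\sharp} \|_{\mathcal{X}_w} = \| (\mathrm{Id} - \Pi_N) h \| \rightarrow 0$ as $N \rightarrow \infty$ by dominated convergence. Thus $\mathcal{A}_w$ is $\| \cdot \|_{\mathcal{X}_w}$-dense in $\mathcal{X}_w$.

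\textbf{Step 3: conclusion.} By Steps~1 and~2, $\mathcal{D}_w (\mathcal{L}) =\mathcal{K} (\mathcal{A}_w)$ is $\| \cdot \|_{\mathcal{X}_w}$-dense in $\mathcal{X}_w$, hence \emph{a fortiori} dense in $\mathcal{X}_w$ for the coarser $w (\mathcal{N})^{- 1} \mathcal{H}$-topology, the inclusion $\mathcal{X}_w \hookrightarrow w (\mathcal{N})^{- 1} \mathcal{H}$ being continuous. Finally $\mathcal{X}_w$ is itself dense in $w (\mathcal{N})^{- 1} \mathcal{H}$: for $\psi \in w (\mathcal{N})^{- 1} \mathcal{H}$ one has $\Pi_N \psi \in \mathcal{X}_w$ (finite spectral support of $\mathcal{L}_{\theta}$) and $\| w (\mathcal{N}) (\psi - \Pi_N \psi) \| \rightarrow 0$ again by dominated convergence. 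Combining the two density statements yields that $\mathcal{D}_w (\mathcal{L})$ is dense in $w (\mathcal{N})^{- 1} \mathcal{H}$, as claimed.

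\textbf{Main difficulty.} The only substantive point is the decision to carry out the argument in the stronger space $\mathcal{X}_w$, where $\mathcal{K}$ and its inverse $\mathrm{Id} - (1 -\mathcal{L}_{\theta})^{- 1} \mathcal{G}^{\succ}$ are bounded by virtue of~{\eqref{eq:est-G-maj}}--{\eqref{eq:controlled-est}}; on $w (\mathcal{N})^{- 1} \mathcal{H}$ neither $\mathcal{K}$ nor $\mathcal{K}^{- 1}$ need be bounded, so a direct approach fails. The remaining ingredients — the diagonal truncations $\Pi_N$ and two routine dominated-convergence limits — are elementary.
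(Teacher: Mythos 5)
Your proof is correct, but it takes a genuinely different route from the paper's. The paper fixes $\psi$ in the intersection space and builds an explicit approximant $\varphi^{\nu}$ as the fixed point of the controlled-structure equation with the enlarged cut-off $\nu M (\mathcal{N})$; it then rewrites $\varphi^{\nu} =\mathcal{K} \varphi^{\nu, \sharp}$ with $\varphi^{\nu, \sharp} = \psi - \mathbbm{1}_{M (\mathcal{N}) \leqslant | \mathcal{L}_{\theta} | < \nu M (\mathcal{N})} (1 -\mathcal{L}_{\theta})^{- 1} \mathcal{G} \varphi^{\nu}$ and checks, via the uniform bounds on $\mathcal{G}_{\pm}$ over the annular spectral region, that $\varphi^{\nu, \sharp}$ lies in the admissible intersection space, obtaining along the way the quantitative estimates~{\eqref{eq:density-e1}}--{\eqref{eq:density-e3}} (an explicit rate $\nu^{- (\theta - 1) / (2 \theta) + \bar{\varepsilon}}$ and control of $\| w (\mathcal{N}) (1 -\mathcal{L}) \varphi^{\nu} \|$). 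You instead observe that~{\eqref{eq:est-G-maj}} makes $\mathrm{Id} - (1 -\mathcal{L}_{\theta})^{- 1} \mathcal{G}^{\succ}$ invertible by Neumann series on $w (\mathcal{N})^{- 1} (1 -\mathcal{L}_{\theta})^{- 1 / 2} \mathcal{H}$, so that $\mathcal{K}$ is a homeomorphism of that space, and you produce admissible sharp parts by joint spectral truncation in $\mathcal{N}$ and $\mathcal{L}_{\theta}$; density then follows from two continuous embeddings and a triangle inequality. Your argument is shorter and purely soft, and it fully proves the lemma as stated (only density is claimed); what it does not give are the paper's quantitative by-products, namely a convergence rate and a bound on the graph norm $\| w (\mathcal{N}) (1 -\mathcal{L}) \varphi^{\nu} \|$ of the approximants, which the paper's construction delivers for free. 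Two cosmetic remarks: your $\Pi_N$ clashes notationally with the Galerkin projector $\Pi_m$ already used in the paper, and the smallness $\| T \|_{\mathcal{X}_w \rightarrow \mathcal{X}_w} < 1$ requires absorbing the implicit constant of~{\eqref{eq:est-G-maj}} into the choice of $L \geqslant L_0 (| w |)$ --- which is exactly the choice already made in Lemma~\ref{lem:exist-controlled}, so nothing new is needed.
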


\begin{proof}
  Note that $w (\mathcal{N})^{- 1} (1 -\mathcal{L}_{\theta})^{- 1} \mathcal{H}
  \cap w (\mathcal{N})^{- 1} (\mathcal{N}+ 1)^{- \alpha (0)} (1
  -\mathcal{L}_{\theta})^{- 1 / 2} \mathcal{H}$ is dense in $w
  (\mathcal{N})^{- 1} \mathcal{H}$, therefore, in order to prove
  Lemma~\ref{lem:density}, it suffices to show that, for any $\psi \in w
  (\mathcal{N})^{- 1} (1 -\mathcal{L}_{\theta})^{- 1} \mathcal{H} \cap w
  (\mathcal{N})^{- 1} (\mathcal{N}+ 1)^{- \alpha (0)} (1
  -\mathcal{L}_{\theta})^{- 1 / 2} \mathcal{H}$ and for all $\nu \geqslant 1$,
  there exists $\varphi^{\nu} \in \mathcal{D}_w (\mathcal{L})$ such that
  \begin{eqnarray}
    \| w (\mathcal{N}) (1 -\mathcal{L}_{\theta})^{1 / 2} (\varphi^{\nu} -
    \psi) \| & \lesssim & \nu^{- (\theta - 1) / (2 \theta) +
    \bar{\varepsilon}} \| w (\mathcal{N}) (1 -\mathcal{L}_{\theta})^{1 / 2}
    \psi \|,  \label{eq:density-e1}\\
    \| w (\mathcal{N}) (1 -\mathcal{L}_{\theta})^{1 / 2} \varphi^{\nu} \| &
    \lesssim & \| w (\mathcal{N}) (1 -\mathcal{L}_{\theta})^{1 / 2} \psi \|, 
    \label{eq:density-e2}\\
    \| w (\mathcal{N}) (1 -\mathcal{L}) \varphi^{\nu} \| & \lesssim & \nu^{1 /
    (2 \theta) + \delta} (\| w (\mathcal{N}) (1 -\mathcal{L}_{\theta}) \psi \|
    \nobracket  \label{eq:density-e3}\\
    &  & \nobracket + \| w (\mathcal{N}) (\mathcal{N}+ 1)^{\alpha (0)} (1
    -\mathcal{L}_{\theta})^{1 / 2} \psi \|), \nonumber
  \end{eqnarray}
  for some $\delta > 0$. By Lemma~\ref{lem:exist-controlled}, there exists
  $\varphi^{\nu} \in w (\mathcal{N})^{- 1} \mathcal{H}$ such that
  \[ \varphi^{\nu}_{} = \mathbbm{1}_{\nu M (\mathcal{N}) \leqslant |
     \mathcal{L}_{\theta} |} (1 -\mathcal{L}_{\theta})^{- 1} \mathcal{G}
     \varphi^{\nu} + \psi \]
  and satisfying estimates~{\eqref{eq:density-e1}}--{\eqref{eq:density-e2}}.
  We are left to show that $\varphi^{\nu} \in \mathcal{D}_w (\mathcal{L})$
  and~{\eqref{eq:density-e3}}. Note that
  \[ \varphi^{\nu}_{} = (1 -\mathcal{L}_{\theta})^{- 1} \mathcal{G}^{\succ}
     \varphi^{\nu} + \varphi^{\nu, \sharp}, \]
  where
  \[ \varphi^{\nu, \sharp} = \psi - \mathbbm{1}_{M (\mathcal{N}) \leqslant |
     \mathcal{L}_{\theta} | < \nu M (\mathcal{N})} (1
     -\mathcal{L}_{\theta})^{- 1} \mathcal{G} \varphi^{\nu} . \]
  In particular, we have $\mathcal{L} \varphi^{\nu} = \varphi^{\nu}
  +\mathcal{G}^{\prec} \varphi^{\nu} - (1 -\mathcal{L}_{\theta}) \varphi^{\nu,
  \sharp}$, and, by Proposition~\ref{prop:well-def-gen}, it suffices to
  estimate $\varphi^{\nu, \sharp}$ in $w (\mathcal{N})^{- 1} (1
  -\mathcal{L}_{\theta})^{- 1} \mathcal{H} \cap w (\mathcal{N})^{- 1}
  (\mathcal{N}+ 1)^{- \alpha (0)} (1 -\mathcal{L}_{\theta})^{- 1 / 2}
  \mathcal{H}$. The first contribution, $\psi$, satisfies the required bounds
  by assumption, so it is enough to show that the second contribution, which
  we denote by $\psi^{\nu}$, satisfies
  \begin{eqnarray}
    \| w (\mathcal{N}) (1 -\mathcal{L}_{\theta}) \psi^{\nu} \| & \lesssim &
    \nu^{1 / (2 \theta) + \delta} \| w (\mathcal{N}) (\mathcal{N}+ 1)^{\alpha
    (0)} (1 -\mathcal{L}_{\theta})^{1 / 2} \psi \|,  \label{eq:density-e4}\\
    \| w (\mathcal{N}) (\mathcal{N}+ 1)^{\alpha (0)} (1
    -\mathcal{L}_{\theta})^{1 / 2} \psi^{\nu} \| & \lesssim & \| w
    (\mathcal{N}) (\mathcal{N}+ 1)^{\alpha (0)} (1 -\mathcal{L}_{\theta})^{1 /
    2} \psi \| .  \label{eq:density-e5}
  \end{eqnarray}
  Notice that $(1 -\mathcal{L}_{\theta}) \psi^{\nu} = - \mathbbm{1}_{M
  (\mathcal{N}) \leqslant | \mathcal{L}_{\theta} | < \nu M (\mathcal{N})}
  \mathcal{G} \varphi^{\nu}$, hence estimate~{\eqref{eq:density-e4}} can be
  obtained from the uniform bounds in Lemma~\ref{lem:uniform-bds-G} as follows
  (note that those bounds are valid also when $m = + \infty$). We have,
  for~$\mathcal{G}_+$,
  \[ \begin{array}{ll}
       & \| w (\mathcal{N})  \mathbbm{1}_{M (\mathcal{N}) \leqslant |
       \mathcal{L}_{\theta} | < \nu M (\mathcal{N})} \mathcal{G}_+ \varphi
       \|\\
       \lesssim & \| w (\mathcal{N}) (1 -\mathcal{L}_{\theta})^{1 / (2 \theta)
       + \delta}  \mathbbm{1}_{M (\mathcal{N}) \leqslant |
       \mathcal{L}_{\theta} | < \nu M (\mathcal{N})}  (1
       -\mathcal{L}_{\theta})^{- 1 / (2 \theta) - \delta} \mathcal{G}_+
       \varphi \|\\
       \lesssim & \nu^{1 / (2 \theta) + \delta} \| w (\mathcal{N}+ 1) M
       (\mathcal{N}+ 1)^{1 / (2 \theta) + \delta} (\mathcal{N}+ 1) (1
       -\mathcal{L}_{\theta})^{1 / 2 - \delta} \varphi \| .
     \end{array} \]
  For $\mathcal{G}_-$ we have, instead
  \[ \begin{array}{ll}
       & \| w (\mathcal{N}) \mathbbm{1}_{M (\mathcal{N}) \leqslant |
       \mathcal{L}_{\theta} | < \nu M (\mathcal{N})} \mathcal{G}_- \varphi
       \|\\
       \lesssim & \| w (\mathcal{N}) (1 -\mathcal{L}_{\theta})^{1 / (2
       \theta)}  \mathbbm{1}_{M (\mathcal{N}) \leqslant | \mathcal{L}_{\theta}
       | < \nu M (\mathcal{N})}  (1 -\mathcal{L}_{\theta})^{- 1 / (2 \theta)}
       \mathcal{G}_- \varphi \|\\
       \lesssim & \nu^{1 / (2 \theta)} \| w (\mathcal{N}- 1) M (\mathcal{N}-
       1)^{1 / (2 \theta)} \mathcal{N}^{3 / 2}  \mathbbm{1}_{M (\mathcal{N})
       \leqslant | \mathcal{L}_{\theta} | < \nu M (\mathcal{N})}  (1
       -\mathcal{L}_{\theta})^{- 1 / (2 \theta)} \mathcal{G}_- \varphi \|,
     \end{array} \]
  which gives estimate~{\eqref{eq:density-e4}} if we choose
  $\bar{\varepsilon}$ small enough. In order to obtain
  estimate~{\eqref{eq:density-e5}}, note that, for $\kappa \in] 0, (\theta -
  1) / (2 \theta) [$,
  \[ \begin{array}{lll}
       \| w (\mathcal{N}) (\mathcal{N}+ 1)^{\alpha (0)} (1
       -\mathcal{L}_{\theta})^{1 / 2} \psi^{\nu} \| & = & \| w (\mathcal{N})
       (\mathcal{N}+ 1)^{\alpha (0)} (1 -\mathcal{L}_{\theta})^{- 1 / 2} 
       \mathbbm{1}_{M (\mathcal{N}) \leqslant | \mathcal{L}_{\theta} | < \nu M
       (\mathcal{N})} \mathcal{G} \varphi^{\nu} \|\\
       & \lesssim & M (n)^{- (\theta - 1) / \theta + 2 \kappa} \| w
       (\mathcal{N}) (\mathcal{N}+ 1)^{\alpha (0)} (1
       -\mathcal{L}_{\theta})^{- 1 / (2 \theta) - \kappa} \mathcal{G}_+
       \varphi^{\nu} \|\\
       &  & + M (n)^{- (\theta - 1) / \theta + 2 \kappa} \| w (\mathcal{N})
       (\mathcal{N}+ 1)^{\alpha (0)} (1 -\mathcal{L}_{\theta})^{- 1 / (2
       \theta)} \mathcal{G}_- \varphi^{\nu} \|
     \end{array} \]
  Now recall that $M (n) \simeq (n + 1)^{3 \theta / (\theta - 1 - 2 \theta
  \bar{\varepsilon})}$ and get by {\eqref{eq:unifbdG1}}--{\eqref{eq:unifbdG2}}
  the inequality
  \[ \| w (\mathcal{N}) (\mathcal{N}+ 1)^{\alpha (0)} (1
     -\mathcal{L}_{\theta})^{1 / 2} \psi^{\nu} \| \lesssim \| w (\mathcal{N})
     (1 +\mathcal{N})^{\alpha (0)} (1 -\mathcal{L}_{\theta})^{1 / 2}
     \varphi^{\nu} \| . \]
  Applying {\eqref{eq:controlled-est}} yields the result.
\end{proof}

\begin{lemma}
  \label{lem:diss}For any $\varphi \in \mathcal{D} (\mathcal{L})$, we have
  \[ \langle \varphi, \mathcal{L} \varphi \rangle \leqslant 0. \]
  In particular, the operator $(\mathcal{L}, \mathcal{D} (\mathcal{L}))$ is
  dissipative.
\end{lemma}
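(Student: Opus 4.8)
The plan is to reduce the quadratic form $\varphi\mapsto\langle\varphi,\mathcal{L}\varphi\rangle$ to that of its symmetric part $\mathcal{L}_\theta$, exactly as is done on $\mathcal{D}(\mathcal{L}^m)$ in Section~\ref{s:galerkin}: writing $\mathcal{L}=\mathcal{L}_\theta+\mathcal{G}$, I will show that the nonlinear part $\mathcal{G}$ contributes nothing, so that $\langle\varphi,\mathcal{L}\varphi\rangle=\langle\varphi,\mathcal{L}_\theta\varphi\rangle=-\|(-\mathcal{L}_\theta)^{1/2}\varphi\|^2\le0$. The first point is that the form is well defined: for $\varphi=\mathcal{K}\varphi^\sharp\in\mathcal{D}(\mathcal{L})$, Lemma~\ref{lem:exist-controlled} gives $\varphi\in(1-\mathcal{L}_\theta)^{-1/2}\mathcal{H}$, while decomposing $\mathcal{G}=\mathcal{G}^\succ+\mathcal{G}^\prec$ one has $\mathcal{L}_\theta\varphi,\mathcal{G}^\succ\varphi\in(1-\mathcal{L}_\theta)^{1/2}\mathcal{H}$ (the latter by~\eqref{eq:est-G-maj}) and $\mathcal{G}^\prec\varphi\in\mathcal{H}$ by Proposition~\ref{prop:well-def-gen} with $\gamma=0$; since $(1-\mathcal{L}_\theta)^{1/2}\mathcal{H}$ is in duality with the space containing $\varphi$, and moreover $\mathcal{L}\varphi\in\mathcal{H}$ by construction of $\mathcal{D}(\mathcal{L})$ (Proposition~\ref{prop:well-def-gen}), the pairing $\langle\varphi,\mathcal{L}\varphi\rangle=\langle\varphi,\mathcal{L}_\theta\varphi\rangle+\langle\varphi,\mathcal{G}\varphi\rangle$ is a genuine number and agrees with the $\mathcal{H}$--inner product. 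So the problem is to prove $\langle\varphi,\mathcal{G}\varphi\rangle=0$.

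For this the structural input is the antisymmetry~\eqref{eq:adj}: summed over chaos components it reads $\langle\psi,\mathcal{G}_+\chi\rangle=-\langle\mathcal{G}_-\psi,\chi\rangle$, whence $\mathcal{G}=\mathcal{G}_++\mathcal{G}_-$ is formally skew-adjoint and $\langle\chi,\mathcal{G}\chi\rangle=0$ for every $\chi$ regular enough that $\mathcal{G}\chi\in\mathcal{H}$ and the series all converge absolutely. To reach a general $\varphi\in\mathcal{D}(\mathcal{L})$ I would regularise at the level of the controlled structure rather than of $\varphi$ itself (note that $J_\varepsilon=e^{-\varepsilon(\mathcal{N}-\mathcal{L}_\theta)}$ does not commute with $\mathcal{K}$): put $\varphi_\varepsilon\assign\mathcal{K}(J_\varepsilon\varphi^\sharp)$. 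Since $J_\varepsilon\varphi^\sharp$ decays faster than any polynomial in $\mathcal{N}$ and $\mathcal{L}_\theta$, and the map $(1-\mathcal{L}_\theta)^{-1}\mathcal{G}^\succ$ — thanks to the size of the cut-off $\mathcal{M}$ — does not lower regularity in either variable, the Neumann series defining $\varphi_\varepsilon$ has the same smoothness; hence $\mathcal{G}\varphi_\varepsilon\in\mathcal{H}$ and $\langle\varphi_\varepsilon,\mathcal{G}\varphi_\varepsilon\rangle=\langle\varphi_\varepsilon,\mathcal{G}_+\varphi_\varepsilon\rangle+\langle\varphi_\varepsilon,\mathcal{G}_-\varphi_\varepsilon\rangle=0$. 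Finally I let $\varepsilon\to0$: $J_\varepsilon\varphi^\sharp\to\varphi^\sharp$ in the norm defining $\mathcal{D}(\mathcal{L})$ (as $J_\varepsilon$ commutes with the relevant weights and tends strongly to the identity), and $\mathcal{K}$ and $\mathcal{G}\mathcal{K}$ are continuous, by~\eqref{eq:controlled-est}, \eqref{eq:est-G-maj} and Proposition~\ref{prop:well-def-gen}, from that space into $(1-\mathcal{L}_\theta)^{-1/2}\mathcal{H}$ and into $(1-\mathcal{L}_\theta)^{1/2}\mathcal{H}$ respectively; since these are dual, $\langle\varphi_\varepsilon,\mathcal{G}\varphi_\varepsilon\rangle\to\langle\varphi,\mathcal{G}\varphi\rangle$, which therefore vanishes.

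Combining the two steps gives $\langle\varphi,\mathcal{L}\varphi\rangle=-\|(-\mathcal{L}_\theta)^{1/2}\varphi\|^2\le0$, and dissipativity of $(\mathcal{L},\mathcal{D}(\mathcal{L}))$ is the equivalent statement $\|(\lambda-\mathcal{L})\varphi\|\ge\lambda\|\varphi\|$ for $\lambda>0$, immediate from $\langle\mathcal{L}\varphi,\varphi\rangle\le0$. The main obstacle is precisely the limiting step: turning the formal skew-adjointness of $\mathcal{G}$ into an honest identity when $\mathcal{G}\varphi$ is only a distribution in $(1-\mathcal{L}_\theta)^{1/2}\mathcal{H}$ rather than an element of $\mathcal{H}$. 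This hinges on the weighted-Sobolev bookkeeping — the exponents $\alpha(0)$, $1/(2\theta)$ and the size of $\mathcal{M}$ in Lemmas~\ref{lem:uniform-bds-G}, \ref{lem:exist-controlled} and Proposition~\ref{prop:well-def-gen} — being arranged so that $\varphi$ and $\mathcal{G}\varphi$ live in mutually dual spaces on which the quadratic form is continuous, and so that the regularisation $J_\varepsilon$ preserves the a priori bounds; once this is checked the convergences are routine.
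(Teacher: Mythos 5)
Your overall route is exactly the paper's: show that $\varphi$, $\mathcal{L}_{\theta}\varphi$ and $\mathcal{G}\varphi$ live in mutually dual weighted spaces so that the quadratic form is a genuine number, kill the drift contribution by the antisymmetry \eqref{eq:adj}, and conclude $\langle\varphi,\mathcal{L}\varphi\rangle=-\|(-\mathcal{L}_{\theta})^{1/2}\varphi\|^2\leqslant 0$. Your first paragraph and the final reduction are correct and coincide with the paper's (very terse) argument.

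The gap is in the middle step. The claim that $\mathcal{G}\varphi_{\varepsilon}\in\mathcal{H}$ for $\varphi_{\varepsilon}=\mathcal{K}(J_{\varepsilon}\varphi^{\sharp})$ is false, and no regularisation in $\mathcal{N}$ and $\mathcal{L}_{\theta}$ can make it true. From \eqref{eq:gp-fourier}, the weighted $\ell^2$ norm of $\mathcal{F}(\mathcal{G}_+\chi)_n$ contains, for each fixed $\ell=k_1+k_2$, a sum over the infinitely many decompositions $k_1+k_2=\ell$ whose summands are comparable to $|\ell|^4\prod_{i\geqslant 3}|k_i|^2\,|\hat{\chi}_{n-1}(\ell,k_{3:n})|^2$ and do not decay as $|k_1|\to\infty$ along $k_2=\ell-k_1$: the divergence comes from the infinite multiplicity of the convolution, not from any lack of decay of $\hat{\chi}_{n-1}$, which is evaluated only at $\ell$. (This is precisely why the paper defines $\mathcal{G}\varphi$ only as an element of the distribution space $(1-\mathcal{L}_{\theta})^{1/2}\mathcal{H}$ even for cylinder functions, and why \eqref{eq:unifbdG1} requires $\gamma>1/(2\theta)>0$. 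Note also that $J_{\varepsilon}$ produces decay in $L_{\theta}(\ell,k_{3:n})$, i.e.\ in $\ell$, which is useless against the sum over $k_1$.) So the identity $\langle\varphi_{\varepsilon},\mathcal{G}\varphi_{\varepsilon}\rangle=0$ cannot be obtained by reducing to honest $\mathcal{H}$-valued operators. The correct, and in fact shorter, route is the one your first paragraph already sets up: the antisymmetry only requires absolute convergence of the duality pairing. For $\chi\in(1+\mathcal{N})^{-1}(1-\mathcal{L}_{\theta})^{-1/2}\mathcal{H}$, Cauchy--Schwarz together with \eqref{eq:unifbdG1}--\eqref{eq:unifbdG2} gives $\sum_n|\langle\chi_{n+1},\mathcal{G}_{+}^m\chi_n\rangle|<\infty$ uniformly in $m$; the identity \eqref{eq:adj} holds at each finite $m$, where all sums are effectively finite; and $\mathcal{G}_{\pm}^m\chi\to\mathcal{G}_{\pm}\chi$ in the dual space by dominated convergence in the uniform bounds of Lemma~\ref{lem:uniform-bds-G}. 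Passing to the limit termwise yields $\langle\chi,\mathcal{G}\chi\rangle=0$ directly for $\chi=\varphi\in\mathcal{D}(\mathcal{L})$, with no $\varepsilon$-regularisation needed. With that replacement your proof is complete and agrees with the paper's.
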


\begin{proof}
  Notice that $\varphi \in \mathcal{D} (\mathcal{L})$ implies
  $\mathcal{L}_{\theta} \varphi, \mathcal{G} \varphi \in (1
  -\mathcal{L}_{\theta})^{1 / 2} \mathcal{H}$ and $\varphi \in (1
  -\mathcal{L}_{\theta})^{- 1 / 2} (1 +\mathcal{N})^{- 1} \mathcal{H}$. These
  regularities are enough to proceed by approximation and establish that
  \[ \langle \varphi, \mathcal{L} \varphi \rangle = - \langle \varphi,
     (-\mathcal{L}_{\theta}) \varphi \rangle + \langle \varphi, \mathcal{G}
     \varphi \rangle = - \langle \varphi, (-\mathcal{L}_{\theta}) \varphi
     \rangle = - \| (-\mathcal{L}_{\theta})^{1 / 2} \varphi \|^2 \leqslant 0,
  \]
  where we used the anti-symmetry of the form associated to $\mathcal{G}$,
  i.e. $\langle \varphi, \mathcal{G} \varphi \rangle = 0$.
\end{proof}

\subsection{Existence and uniqueness for the Kolmogorov equation}

Having defined a domain for $\mathcal{L}$ it remains to study the Kolmogorov
equation $\partial_t \varphi =\mathcal{L} \varphi$. In particular, we consider
the equation for $\varphi^{m, \sharp}$, which was defined
in~{\eqref{eq:phimsharp}},
\[ \begin{array}{lll}
     \partial_t \varphi^{m, \sharp} + (1 -\mathcal{L}_{\theta}) \varphi^{m,
     \sharp} & = & \mathcal{L}^m \varphi^m + (1 -\mathcal{L}_{\theta})
     \varphi^{m, \sharp} - (1 -\mathcal{L}_{\theta})^{- 1} \mathcal{G}^{m,
     \succ} \partial_t \varphi^m\\
     & = & \varphi^m +\mathcal{G}^{m, \prec} \varphi^m - (1
     -\mathcal{L}_{\theta})^{- 1} \mathcal{G}^{m, \succ} \partial_t
     \varphi^m\\
     & = & \varphi^m +\mathcal{G}^{m, \prec} \varphi^m - (1
     -\mathcal{L}_{\theta})^{- 1} \mathcal{G}^{m, \succ} (\varphi^m
     +\mathcal{G}^{m, \prec} \varphi^m - (1 -\mathcal{L}_{\theta}) \varphi^{m,
     \sharp})\\
     & \backassign & \Phi^{m, \sharp} .
   \end{array} \]
We want to get a suitable bound in terms of $\varphi^{m, \sharp}_0$ for each
term of $\Phi^{m, \sharp}$. The Schauder estimate in
Lemma~\ref{lemma:schauder} will be crucial. We will also need the following
result.

\begin{lemma}
  We have
  \begin{equation}
    \begin{array}{ll}
      & \| (1 +\mathcal{N})^p (1 -\mathcal{L}_{\theta})^{1 / 2} \varphi^{m,
      \sharp} (t) \|\\
      \lesssim & (t e^{t C} + 1)^{1 / 2} (\| (1 +\mathcal{N})^p (1
      -\mathcal{L}_{\theta}) \varphi_0^{m, \sharp} \| + \| (1 +\mathcal{N})^{p
      + \alpha (0)} (1 -\mathcal{L}_{\theta})^{1 / 2} \varphi_0^{m, \sharp}
      \|) .
    \end{array} \label{eq:est-phi-sharp}
  \end{equation}
\end{lemma}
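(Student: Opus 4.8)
The plan is to reduce the bound to the a priori estimates on $\varphi^m(t)=T^m_t\varphi_0$ proved in Corollary~\ref{cor:est-phi}, and then to convert between $\varphi^m$ and $\varphi^{m,\sharp}$ at the two endpoints $t$ and $0$ by means of the controlled‑structure estimates of Lemma~\ref{lem:exist-controlled} and Proposition~\ref{prop:well-def-gen}. Throughout one works with the polynomial weight $w(n)=(1+n)^p$, so that $|w|$ is under control and the relevant cut‑off $M$ is the one fixed once and for all.

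First I would pass from $\varphi^{m,\sharp}(t)$ to $\varphi^m(t)$. By the definition~\eqref{eq:phimsharp} and the bound~\eqref{eq:est-G-maj} applied with $w(n)=(1+n)^p$,
\[
\|(1+\mathcal{N})^p(1-\mathcal{L}_\theta)^{-1/2}\mathcal{G}^{m,\succ}\varphi^m(t)\|\lesssim L^{-(\theta-1)/(2\theta)+\bar{\varepsilon}}\,\|(1+\mathcal{N})^p(1-\mathcal{L}_\theta)^{1/2}\varphi^m(t)\|,
\]
hence $\|(1+\mathcal{N})^p(1-\mathcal{L}_\theta)^{1/2}\varphi^{m,\sharp}(t)\|\lesssim\|(1+\mathcal{N})^p(1-\mathcal{L}_\theta)^{1/2}\varphi^m(t)\|$, and it remains to estimate the right‑hand side. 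For this I would invoke~\eqref{eq:ctrl-phi} with $\alpha=p$:
\[
\|(1+\mathcal{N})^p(1-\mathcal{L}_\theta)^{1/2}\varphi^m(t)\|^2\lesssim t e^{tC}\,\|(1+\mathcal{N})^p\mathcal{L}^m\varphi_0\|^2+\|(1+\mathcal{N})^p(1-\mathcal{L}_\theta)^{1/2}\varphi_0\|^2 .
\]

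Next I would bound the two terms on the right in terms of the $\sharp$‑datum $\varphi_0^{m,\sharp}$, using that $\varphi_0=\mathcal{K}^m\varphi_0^{m,\sharp}$. The estimate~\eqref{eq:controlled-est} gives immediately $\|(1+\mathcal{N})^p(1-\mathcal{L}_\theta)^{1/2}\varphi_0\|\lesssim\|(1+\mathcal{N})^p(1-\mathcal{L}_\theta)^{1/2}\varphi_0^{m,\sharp}\|\le\|(1+\mathcal{N})^p(1-\mathcal{L}_\theta)\varphi_0^{m,\sharp}\|$. For the term $\mathcal{L}^m\varphi_0$ I would rewrite~\eqref{eq:act-Lm} as $\mathcal{L}^m\varphi^m=\varphi^m-(1-\mathcal{L}_\theta)\varphi^{m,\sharp}-\mathcal{G}^{m,\prec}\varphi^m$ and evaluate it at $t=0$, obtaining $\mathcal{L}^m\varphi_0=\varphi_0-(1-\mathcal{L}_\theta)\varphi_0^{m,\sharp}-\mathcal{G}^{m,\prec}\varphi_0$. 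The first two summands are again controlled by $\|(1+\mathcal{N})^p(1-\mathcal{L}_\theta)\varphi_0^{m,\sharp}\|$ through~\eqref{eq:controlled-est}, and the last one, uniformly in $m$, by Proposition~\ref{prop:well-def-gen} with $\gamma=0$: $\|(1+\mathcal{N})^p\mathcal{G}^{m,\prec}\varphi_0\|\lesssim\|(1+\mathcal{N})^{p+\alpha(0)}(1-\mathcal{L}_\theta)^{1/2}\varphi_0^{m,\sharp}\|$. Inserting these into the previous display and taking square roots (bounding $te^{tC}\vee 1\le te^{tC}+1$) gives exactly~\eqref{eq:est-phi-sharp}.

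The one step I expect to be delicate is the bound on $\mathcal{L}^m\varphi_0=\partial_t\varphi^m(0)$: it must be uniform in $m$ and expressed through $\varphi_0^{m,\sharp}$ rather than through $\varphi_0$. Estimating $\mathcal{G}^m\varphi_0$ directly would require the quantity $(1-\mathcal{L}_\theta)^{(1+1/\theta)/2}\varphi_0$ (via~\eqref{eq:unifbdG1}), which cannot be controlled uniformly in $m$ once $\varphi_0=\mathcal{K}^m\varphi_0^{m,\sharp}$; it is precisely the controlled decomposition~\eqref{eq:act-Lm} together with the low‑frequency cut‑off built into $\mathcal{G}^{m,\prec}$ that trades this excess regularity for the polynomial factor $(1+\mathcal{N})^{\alpha(0)}$ appearing on the right‑hand side of~\eqref{eq:est-phi-sharp}. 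Everything else is a routine chaining of the inequalities already at our disposal.
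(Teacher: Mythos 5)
Your proposal is correct and follows essentially the same route as the paper: pass from $\varphi^{m,\sharp}(t)$ to $\varphi^m(t)$ via~\eqref{eq:phimsharp} and~\eqref{eq:est-G-maj}, apply the a priori bound~\eqref{eq:ctrl-phi}, and then convert $\|(1+\mathcal{N})^p\mathcal{L}^m\varphi_0\|$ and $\|(1+\mathcal{N})^p(1-\mathcal{L}_\theta)^{1/2}\varphi_0\|$ into norms of $\varphi_0^{m,\sharp}$ using~\eqref{eq:act-Lm}, \eqref{eq:controlled-est} and Proposition~\ref{prop:well-def-gen}. You merely make explicit the last step, which the paper compresses into ``we exploited Proposition~\ref{prop:well-def-gen}'', and your closing remark about why $\mathcal{G}^{m,\prec}$ rather than $\mathcal{G}^m$ must be estimated is exactly the point of the controlled decomposition.
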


\begin{proof}
  \tmcolor{red}{}By~{\eqref{eq:ctrl-phi}} and Lemma~\ref{lem:exist-controlled}
  it follows that
  \[ \begin{array}{ll}
       & \| (1 +\mathcal{N})^p (1 -\mathcal{L}_{\theta})^{1 / 2} \varphi^{m,
       \sharp} (t) \|\\
       \lesssim & \| (1 +\mathcal{N})^p (1 -\mathcal{L}_{\theta})^{1 / 2}
       \varphi^m (t) \| + \| (1 +\mathcal{N})^p (1 -\mathcal{L}_{\theta})^{- 1
       / 2} \mathcal{G}^{m, \succ} \varphi^m (t) \|\\
       \lesssim & t e^{t C} \| (1 +\mathcal{N})^p \mathcal{L}^m \varphi^m_0 \|
       + \| (1 +\mathcal{N})^p (1 -\mathcal{L}_{\theta})^{1 / 2} \varphi^m_0
       \|\\
       \lesssim & (t e^{t C} + 1)^{1 / 2} (\| (1 +\mathcal{N})^p (1
       -\mathcal{L}_{\theta}) \varphi_0^{m, \sharp} \| + \| (1
       +\mathcal{N})^{p + \alpha (0)} (1 -\mathcal{L}_{\theta})^{1 / 2}
       \varphi_0^{m, \sharp} \|),
     \end{array} \]
  where in the last step we exploited Proposition~\ref{prop:well-def-gen}.
\end{proof}

For $\gamma \in] 1 / 2, 1 - 1 / (2 \theta) [$, we have that, by the
estimates~{\eqref{eq:unifbdG1}} and~{\eqref{eq:unifbdG2}},
\begin{eqnarray*}
  \| (1 +\mathcal{N})^p (1 -\mathcal{L}_{\theta})^{\gamma - 1} \mathcal{G}^{m,
  \succ} (1 -\mathcal{L}_{\theta}) \varphi^{m, \sharp} (s) \| & \lesssim & \|
  (1 +\mathcal{N})^{p + 3 / 2} (1 -\mathcal{L}_{\theta})^{\gamma + 1 / 2 + 1 /
  (2 \theta)} \varphi^{m, \sharp} (s) \|\\
  & \lesssim & \| (1 +\mathcal{N})^{p + 3 / 2} (1
  -\mathcal{L}_{\theta})^{\gamma + 1 / 2 + 1 / (2 \theta)} \varphi^{m, \sharp}
  (s) \| .
\end{eqnarray*}
By interpolation for products, there exists $q > 0$ such that, for all
$\varepsilon \in] 0, 1 [$,
\[ \begin{array}{lll}
     \| (1 +\mathcal{N})^{p + 3 / 2} (1 -\mathcal{L}_{\theta})^{\gamma + 1 / 2
     + 1 / (2 \theta)} \varphi^{m, \sharp} (s) \| & \lesssim & C_{\varepsilon}
     \| (1 +\mathcal{N})^q (1 -\mathcal{L}_{\theta})^{1 / 2} \varphi^{m,
     \sharp} (s) \|\\
     &  & + \varepsilon \| (1 +\mathcal{N})^p (1
     -\mathcal{L}_{\theta})^{\gamma + 1} \varphi^{m, \sharp} (s) \|,
   \end{array} \]
where the first term on the right-hand side can be controlled via the a priori
estimate~{\eqref{eq:est-phi-sharp}}, while the second term can be absorbed on
the left-hand side. Moreover, we have by~{\eqref{eq:est-G-minor}} and by
estimate~{\eqref{eq:est-phi-sharp}},
\[ \begin{array}{lll}
     \| (1 +\mathcal{N})^p (1 -\mathcal{L}_{\theta})^{\gamma} \mathcal{G}^{m,
     \prec} \varphi^m (s) \| & \lesssim & \| (1 +\mathcal{N})^{p + \alpha
     (\gamma)} (1 -\mathcal{L}_{\theta})^{1 / 2} \varphi^{m, \sharp} (s) \|\\
     & \lesssim & \| (1 +\mathcal{N})^{p + \alpha (\gamma)} (1
     -\mathcal{L}_{\theta}) \varphi^{m, \sharp}_0 \|\\
     &  & + \| (1 +\mathcal{N})^{p + \alpha (\gamma) + \alpha (0)} (1
     -\mathcal{L}_{\theta})^{1 / 2} \varphi^{m, \sharp}_0 \| .
   \end{array} \]
Recalling $\gamma \in] 1 / 2, 1 - 1 / (2 \theta) [$ and exploiting
estimates~{\eqref{eq:unifbdG1}}--{\eqref{eq:unifbdG2}}, we get
\[ \begin{array}{ll}
     & \| (1 +\mathcal{N})^p (1 -\mathcal{L}_{\theta})^{\gamma - 1}
     \mathcal{G}^{m, \succ} \mathcal{G}^{m, \prec} \varphi^m (s) \|\\
     \lesssim & \| (1 +\mathcal{N})^{p + 3 / 2} (1
     -\mathcal{L}_{\theta})^{\gamma - 1 / 2 + 1 / (2 \theta)} \mathcal{G}^{m,
     \prec} \varphi^m (s) \|\\
     \lesssim & \| (1 +\mathcal{N})^{p + 3 / 2 + \alpha (\gamma - 1 / 2 + 1 /
     (2 \theta))} (1 -\mathcal{L}_{\theta})^{1 / 2} \varphi^{m, \sharp} (s)
     \|\\
     \lesssim & \| (1 +\mathcal{N})^{p + \alpha (\gamma)} (1
     -\mathcal{L}_{\theta})^{1 / 2} \varphi^{m, \sharp} (s) \|,
   \end{array} \]
where we used $3 / 2 + \alpha (\gamma - 1 / 2 + 1 / (2 \theta)) < \alpha
(\gamma)$ whenever $\bar{\varepsilon} < 1 / 3 - 1 / (3 \theta)$. This bound
can be controlled via~{\eqref{eq:est-phi-sharp}} as above. As a consequence,
we established that, after renaming $q = q (p, \gamma) > 0$,
\[ \begin{array}{lll}
     \sup_{0 \leqslant t \leqslant T} \| (1 +\mathcal{N})^p (1
     -\mathcal{L}_{\theta})^{\gamma} \Phi^{m, \sharp} (t) \| & \lesssim_T & \|
     (1 +\mathcal{N})^q (1 -\mathcal{L}_{\theta}) \varphi^{m, \sharp}_0 \|\\
     &  & + \| (1 +\mathcal{N})^{q + \alpha (0)} (1 -\mathcal{L}_{\theta})^{1
     / 2} \varphi^{m, \sharp}_0 \|\\
     &  & + \varepsilon \sup_{0 \leqslant t \leqslant T} \| (1
     +\mathcal{N})^p (1 -\mathcal{L}_{\theta})^{\gamma + 1} \varphi^{m,
     \sharp} (t) \|,
   \end{array} \]
and hence, for $\gamma \in] 1 / 2, 1 - 1 / (2 \theta) [$,
\[ \begin{array}{lll}
     \sup_{0 \leqslant t \leqslant T} \| (1 +\mathcal{N})^p (1
     -\mathcal{L}_{\theta})^{1 + \gamma} \varphi^{m, \sharp} (t) \| &
     \lesssim_T & \| (1 +\mathcal{N})^q (1 -\mathcal{L}_{\theta}) \varphi^{m,
     \sharp}_0 \| + \| (1 +\mathcal{N})^q (1 -\mathcal{L}_{\theta})^{1 / 2}
     \varphi^{m, \sharp}_0 \|\\
     &  & + \| (1 +\mathcal{N})^p (1 -\mathcal{L}_{\theta})^{1 + \gamma}
     \varphi^{m, \sharp}_0 \|\\
     & \lesssim_T & \| (1 +\mathcal{N})^q (1 -\mathcal{L}_{\theta})^{1 +
     \gamma} \varphi^{m, \sharp}_0 \| .
   \end{array} \]
Recall $\partial_t \varphi^{m, \sharp} = (1 -\mathcal{L}_{\theta}) \varphi^{m,
\sharp} + \Phi^{m, \sharp} (t)$, so that
\[ \sup_{0 \leqslant t \leqslant T} \| (1 +\mathcal{N})^p (1
   -\mathcal{L}_{\theta})^{\gamma} \partial_t \varphi^{m, \sharp} (t) \|
   \lesssim \| (1 +\mathcal{N})^q (1 -\mathcal{L}_{\theta})^{1 + \gamma}
   \varphi^{m, \sharp}_0 \| . \]
By interpolation, this gives
\[ \| (1 +\mathcal{N})^p (1 -\mathcal{L}_{\theta})^{1 + \gamma / 2}
   (\varphi^{m, \sharp} (t) - \varphi^{m, \sharp} (s)) \| \leqslant | t - s
   |^{\kappa} \| (1 +\mathcal{N})^q (1 -\mathcal{L}_{\theta})^{1 + \gamma}
   \varphi^{m, \sharp}_0 \| . \]
Introduce now, for $p > 0$, the sets
\[ \mathcal{U}_p = \bigcup_{\gamma \in \left] \frac{1}{2}, 1 - \frac{1}{2
   \theta} \right[} \mathcal{K} (1 +\mathcal{N})^{q (p, \gamma)} (1
   -\mathcal{L}_{\theta})^{- 1 - \gamma} \mathcal{H} \subset \mathcal{H}, \]
and $\mathcal{U}= \bigcup_{p > \alpha (0)} \mathcal{U}_p$.

\begin{theorem}
  \label{thm:exist-kolmogorov}Let $p > 0$ and $\varphi_0 \in \mathcal{U}_p$.
  Then there exists a solution
  \[ \varphi \in \bigcup_{\delta > 0} C (\mathbb{R}_+ ; (1 +\mathcal{N})^{- p
     + \delta} (1 -\mathcal{L}_{\theta})^{- 1} \mathcal{H}) \]
  to the Kolmogorov backward equation $\partial_t \varphi =\mathcal{L}
  \varphi$ with initial condition $\varphi (0) = \varphi_0$. For $p > \alpha
  (0)$, we have $\varphi \in C (\mathbb{R}_+, \mathcal{D} (\mathcal{L})) \cap
  C^1 (\mathbb{R}, \mathcal{H})$ and, by dissipativity of~$\mathcal{L}$, this
  solution is unique.
\end{theorem}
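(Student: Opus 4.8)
The plan is to obtain $\varphi$ as a compactness limit of the Galerkin solutions of Lemma~\ref{lem:exist-m-kolmogorov}, exploiting the uniform-in-$m$ a priori bounds assembled in this section. Fix $\varphi_0\in\mathcal{U}_p$; by definition $\varphi_0=\mathcal{K}\varphi_0^\sharp$ for some $\gamma\in\,]1/2,1-1/(2\theta)[$ and some $\varphi_0^\sharp\in(1+\mathcal{N})^{-q(p,\gamma)}(1-\mathcal{L}_\theta)^{-1-\gamma}\mathcal{H}$. First I would set $\varphi_0^m\assign\mathcal{K}^m\varphi_0^\sharp$, which (choosing $q$ large) lies in $\mathcal{V}$ by Lemma~\ref{lem:exist-controlled}, whose controlled part equals $\varphi_0^\sharp$ for \emph{every} $m$, and which converges to $\varphi_0$ as $m\to\infty$ (the maps $\mathcal{K}^m$ converging to $\mathcal{K}=\mathcal{K}^\infty$). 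Let $\varphi^m\in C(\mathbb{R}_+,\mathcal{D}(\mathcal{L}^m))\cap C^1(\mathbb{R}_+;\mathcal{H})$ be the Galerkin solution with $\varphi^m(0)=\varphi_0^m$, and $\varphi^{m,\sharp}$ its controlled component~\eqref{eq:phimsharp}, so that $\varphi^m=\mathcal{K}^m\varphi^{m,\sharp}$ and $\varphi^{m,\sharp}(0)=\varphi_0^\sharp$ for all $m$.

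Next I would feed in the estimates derived immediately before the statement: uniformly in $m$ and locally uniformly in $t$, $\varphi^{m,\sharp}(t)$ is bounded in $(1+\mathcal{N})^{-p}(1-\mathcal{L}_\theta)^{-1-\gamma}\mathcal{H}$ and $\partial_t\varphi^{m,\sharp}(t)$ in $(1+\mathcal{N})^{-p}(1-\mathcal{L}_\theta)^{-\gamma}\mathcal{H}$, both by $\|(1+\mathcal{N})^{q}(1-\mathcal{L}_\theta)^{1+\gamma}\varphi_0^\sharp\|<\infty$, whence the uniform Hölder bound for $\varphi^{m,\sharp}$ in time with values in $(1+\mathcal{N})^{-p}(1-\mathcal{L}_\theta)^{-1-\gamma/2}\mathcal{H}$. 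Since losing a fixed $\delta>0$ of regularity \emph{both} in $\mathcal{N}$ and in $\mathcal{L}_\theta$ turns the embedding compact — within each chaos $(1-\mathcal{L}_\theta)^{-\delta}$ is a compact multiplier, and the extra negative power of $1+\mathcal{N}$ gives summability of the tail in the chaos index, a scalar weight in the chaos index alone not sufficing — the Arzelà--Ascoli theorem produces a subsequence along which $\varphi^{m,\sharp}\to\varphi^\sharp$ in $C_{\mathrm{loc}}(\mathbb{R}_+;(1+\mathcal{N})^{-p+\delta}(1-\mathcal{L}_\theta)^{-1-\gamma/2+\delta}\mathcal{H})$. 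By lower semicontinuity of the norm the limit inherits the two uniform bounds, and interpolating them (using $\gamma>1/2$) yields $\varphi^\sharp\in C(\mathbb{R}_+;(1+\mathcal{N})^{-p+\delta}(1-\mathcal{L}_\theta)^{-1}\mathcal{H})$ for every $\delta>0$.

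Then I would put $\varphi\assign\mathcal{K}\varphi^\sharp$, well defined by Lemma~\ref{lem:exist-controlled}; since $\mathcal{K}^m\varphi^{m,\sharp}\to\mathcal{K}\varphi^\sharp$ one has $\varphi^m\to\varphi$, and the claimed regularity of $\varphi$ follows from the relation $\varphi=(1-\mathcal{L}_\theta)^{-1}\mathcal{G}^{\succ}\varphi+\varphi^\sharp$ together with~\eqref{eq:est-G-maj}--\eqref{eq:controlled-est}. To pass to the limit in the Bochner identity $\varphi^m(t)=\varphi_0^m+\int_0^t\mathcal{L}^m\varphi^m(s)\,\mathd s$ I would use that $\mathcal{L}^m\varphi^m$ is bounded in $C_{\mathrm{loc}}(\mathbb{R}_+;(1+\mathcal{N})^{-\alpha}\mathcal{H})$ uniformly in $m$ (Corollary~\ref{cor:est-phi}, Lemma~\ref{lem:exist-controlled}, Proposition~\ref{prop:well-def-gen}, all valid for $m=\infty$) and that $\|(1-\mathcal{L}_\theta)^{-1/2}(\mathcal{G}-\mathcal{G}^m)\psi\|\to0$ on a dense set (the convergence already exploited in the proof of Theorem~\ref{thm:existence-cyl-martingalepb}); together with~\eqref{eq:act-Lm} this identifies the limit of $\mathcal{L}^m\varphi^m(s)$ as $\mathcal{L}\varphi(s)$ and gives $\varphi(t)=\varphi_0+\int_0^t\mathcal{L}\varphi(s)\,\mathd s$, i.e.\ $\partial_t\varphi=\mathcal{L}\varphi$ with $\varphi(0)=\varphi_0$. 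When $p>\alpha(0)$, Proposition~\ref{prop:well-def-gen} with $\gamma=0$ shows $\mathcal{G}^{\prec}\varphi\in C(\mathbb{R}_+;\mathcal{H})$, so by~\eqref{eq:act-Lm} $\mathcal{L}\varphi\in C(\mathbb{R}_+;\mathcal{H})$ and $\varphi\in C^1(\mathbb{R};\mathcal{H})$; moreover the hypothesis $p>\alpha(0)$ forces $\varphi^\sharp(t)$ into the set defining $\mathcal{D}(\mathcal{L})$ in Lemma~\ref{lem:density}, so $\varphi(t)\in\mathcal{D}(\mathcal{L})$ for every $t$.

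Finally, uniqueness is the soft part: if $\varphi^{(1)},\varphi^{(2)}\in C(\mathbb{R}_+;\mathcal{D}(\mathcal{L}))\cap C^1(\mathbb{R};\mathcal{H})$ both solve the equation with the same initial datum, then $\psi\assign\varphi^{(1)}-\varphi^{(2)}$ solves $\partial_t\psi=\mathcal{L}\psi$ with $\psi(0)=0$, and by the dissipativity of $\mathcal{L}$ (Lemma~\ref{lem:diss}) $\tfrac{\mathd}{\mathd t}\|\psi(t)\|^2=2\langle\psi(t),\mathcal{L}\psi(t)\rangle\leqslant0$, whence $\psi\equiv0$. I expect the main obstacle to be the compactness-and-limit-passage step: one must set up the ambient space precisely so that the $\mathcal{N}$-weighted $(1-\mathcal{L}_\theta)$-smoothing of the a priori bounds yields genuine compactness, and then control the $m$-dependent operators inside $\mathcal{L}^m\varphi^m$ — through the splitting $\mathcal{G}^m=\mathcal{G}^{m,\succ}+\mathcal{G}^{m,\prec}$ and the uniform bounds of Lemma~\ref{lem:uniform-bds-G} — well enough to commute the limit with the time integral.
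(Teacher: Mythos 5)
Your proposal is correct and follows essentially the same route as the paper: solve the Galerkin problems with initial data $\mathcal{K}^m\varphi_0^{\sharp}$, use the uniform a priori bounds on $\varphi^{m,\sharp}$ and its time derivative to get Arzel\`a--Ascoli compactness in the slightly weaker space $(1+\mathcal{N})^{-p+\delta}(1-\mathcal{L}_{\theta})^{-1}\mathcal{H}$, set $\varphi=\mathcal{K}\varphi^{\sharp}$, pass to the limit in the Bochner integral identity via the decomposition \eqref{eq:act-Lm} and the uniform bounds on $\mathcal{G}^{m,\succ}$, $\mathcal{G}^{m,\prec}$, and conclude membership in $\mathcal{D}(\mathcal{L})$ and uniqueness by dissipativity exactly as in the text. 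Your added remarks on why the embedding is compact (compact multiplier within each chaos plus summability in the chaos index) only make explicit what the paper compresses into ``a diagonal argument''.
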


\begin{proof}
  Let $\varphi_0 \in \mathcal{U}_p$ and set $\varphi_0^{\sharp} \assign
  \mathcal{K}^{- 1} \varphi_0 \in (1 +\mathcal{N})^{- q} (1
  -\mathcal{L}_{\theta})^{- 1 - \gamma} \mathcal{H}$ for $\gamma \in] 1 / 2, 1
  - 1 / (2 \theta) [$ and $p > 0$. For $m \in \mathbb{N}$, let $\varphi^m$ be
  the solution to $\partial_t \varphi^m =\mathcal{L}^m \varphi^m$ with initial
  condition $\varphi^m (0) =\mathcal{K}^m \varphi_0^{\sharp}$. A diagonal
  argument yields the relative compactness of bounded sets of $(1
  +\mathcal{N})^{- p} (1 -\mathcal{L}_{\theta})^{- 1 - \gamma / 2}
  \mathcal{H}$ in the space $(1 +\mathcal{N})^{- p + \delta} (1
  -\mathcal{L}_{\theta})^{- 1} \mathcal{H}$ for $\delta > 0$, with the
  consequence that, by Ascoli-Arzel{\`a} the sequence $(\varphi^{m,
  \sharp})_m$ is relatively compact in $C (\mathbb{R}_+ ; (1 +\mathcal{N})^{-
  p + \delta} (1 -\mathcal{L}_{\theta})^{- 1} \mathcal{H})$ equipped with the
  topology of uniform convergence on compact sets. We denote
  $\varphi^{\sharp}$ a limit point of such a sequence and let $\varphi
  =\mathcal{K} \varphi^{\sharp}$. Then, along the convergent subsequence,
  \begin{eqnarray*}
    \varphi (t) - \varphi (0) & = & \lim_{m \rightarrow \infty} (\varphi^m (t)
    - \varphi^m (0))\\
    & = & \lim_{m \rightarrow \infty} \int_0^t \mathcal{L}^m \varphi^m (s)
    \mathd s\\
    & = & \lim_{m \rightarrow \infty} \int_0^t (\mathcal{L}_{\theta}
    \varphi^{m, \sharp} (s) +\mathcal{G}^{m, \prec} \mathcal{K}^m \varphi^{m,
    \sharp} (s)) \mathd s\\
    & = & \lim_{m \rightarrow \infty} \int_0^t (\mathcal{L}_{\theta}
    \varphi^{\sharp} (s) +\mathcal{G}^{m, \prec} \mathcal{K}^m \varphi^{m,
    \sharp} (s)) \mathd s\\
    & = & \int_0^t (\mathcal{L}_{\theta} \varphi^{\sharp} (s)
    +\mathcal{G}^{\prec} \mathcal{K} \varphi^{\sharp} (s)) \mathd s,
  \end{eqnarray*}
  where we exploited our uniform bounds on $\mathcal{L}_{\theta}$,
  $\mathcal{G}^{m, \succ}$, $\mathcal{K}^m$ and the convergence of
  $\varphi^{m, \sharp}$ to $\varphi^{\sharp}$ as $m \rightarrow \infty$ to get
  the 4th equality, while the last step follows from our bounds for
  $\mathcal{G}^{\prec}$ and $\mathcal{K}$, together with the dominated
  convergence theorem.
  
  If we take $p > \alpha (0)$, then by definition
  (cfr.~Lemma~\ref{lem:density}) $\varphi \in \mathcal{D} (\mathcal{L})$.
  Furthermore, $\mathcal{L} \varphi \in C (\mathbb{R}_+ ; \mathcal{H})$ and we
  have $\varphi \in C^1 (\mathbb{R}_+ ; \mathcal{H})$ because of the relation
  $\varphi (t) - \varphi (s) = \int_s^t \mathcal{L} \varphi (\tau) \mathd
  \tau$. We can hence compute,
  \[ \partial_t \| \varphi (t) \|^2 = 2 \langle \varphi (t), \mathcal{L}
     \varphi (t) \rangle \leqslant 0, \]
  by the dissipativity of the operator $\mathcal{L}$ given by
  Lemma~\ref{lem:diss}. Therefore, for any solution we have $\| \varphi (t) \|
  \leqslant \| \varphi_0 \|$, which together with the linearity of the
  equation yields the uniqueness.
\end{proof}

\section{Bounds on the drift}\label{sec:bounds}

We prove there the key bounds on the drift $\mathcal{G}^m$.

\begin{proof*}{Proof of Lemma~\ref{lem:uniform-bds-G}}
  We start by estimating $\mathcal{G}_+^m$. We have, by
  Lemma~\ref{lem:app-estimate} and since $\gamma > 1 / (2 \theta)$,

  \begin{equation}
    \begin{array}{ll}
      & \| w (\mathcal{N}) (1 -\mathcal{L}_{\theta})^{- \gamma}
      \mathcal{G}_+^m \varphi \|^2 = \sum_{n \geqslant 0} n! w (n)^2
      \sum_{k_{1 : n}} \left( \prod_{i = 1}^n | 2 \pi k_i |^2 \right) |
      \mathcal{F} ((1 -\mathcal{L}_{\theta})^{- \gamma} \mathcal{G}_+^m
      \varphi)_n (k_{1 : n}) |^2\\
      \lesssim & \sum_{n \geqslant 2} n! n^2 w (n)^2 \sum_{k_{1 : n}} \left(
      \prod_{i = 1}^n | 2 \pi k_i |^2 \right) \frac{\mathbbm{1}_{| k_1 |, |
      k_2 |, | k_1 + k_2 | \leqslant m} | k_1 + k_2 |^4}{(1 + L_{\theta} (k_{1
      : n}))^{2 \gamma} | k_1 |^2 | k_2 |^2} | \hat{\varphi}_{n - 1} (k_1 +
      k_2, k_{3 : n}) |^2\\
      \lesssim & \sum_{n \geqslant 2} n! n^2 w (n)^2 \sum_{\ell, k_{3 : n}}
      \left( \prod_{i = 3}^n | 2 \pi k_i |^2 \right) | \ell |^4 |
      \hat{\varphi}_{n - 1} (\ell, k_{3 : n}) |^2 \sum_{k_1 + k_2 = \ell} (1 +
      L_{\theta} (k_{1 : n}))^{- 2 \gamma}\\
      \lesssim & \sum_{n \geqslant 2} n! n^2 w (n)^2 \sum_{\ell, k_{3 : n}}
      \left( \prod_{i = 3}^n | 2 \pi k_i |^2 \right) | \ell |^4 (1 +
      L_{\theta} (\ell, k_{3 : n}))^{- 2 \gamma + 1 / \theta} |
      \hat{\varphi}_{n - 1} (\ell, k_{3 : n}) |^2 .
    \end{array} \label{eq:ineq1-proofbds}
  \end{equation}
  Introducing the notation $\ell_1 = \ell = k_1 + k_2$ and $\ell_i = k_{i +
  1}$ for $i \geqslant 2$, we get
  \[ \begin{array}{ll}
       \lesssim & \sum_{n \geqslant 2} n! n^2 w (n)^2 \sum_{\ell_{1 : n - 1}}
       \left( \prod_{i = 1}^{n - 1} | 2 \pi \ell_i |^2 \right) | \ell_1 |^2 (1
       + L_{\theta} (\ell_{1 : n - 1}))^{- 2 \gamma + 1 / \theta} |
       \hat{\varphi}_{n - 1} (\ell_{1 : n - 1}) |^2 .
     \end{array} \]
  then using the symmetry of $\hat{\varphi}_{n - 1}$ we reduce this to
  \[ \begin{array}{ll}
       \lesssim & \sum_{n \geqslant 2} n! n w (n)^2 \sum_{\ell_{1 : n - 1}}
       \left( \prod_{i = 1}^{n - 1} | 2 \pi \ell_i |^2 \right) \frac{| \ell_1
       |^2 + \cdots + | \ell_n |^2}{1 + L_{\theta} (\ell_{1 : n - 1})} (1 +
       L_{\theta} (\ell_{1 : n - 1}))^{- 2 \gamma + 1 / \theta + 1} |
       \hat{\varphi}_{n - 1} (\ell_{1 : n - 1}) |^2 .
     \end{array} \]
  from which we obtain
  \[ \lesssim \sum_{n \geqslant 1} n! (n + 1)^2 w (n + 1)^2 \sum_{\ell_{1 :
     n}} \left( \prod_{i = 1}^n | 2 \pi \ell_i |^2 \right) (1 + L_{\theta}
     (\ell_{1 : n}))^{- 2 \gamma + 1 / \theta + 1} | \hat{\varphi}_n (\ell_{1
     : n}) |^2 \]
  \[ \lesssim \| w (\mathcal{N}+ 1) (1 +\mathcal{N}) (1
     -\mathcal{L}_{\theta})^{(1 + 1 / \theta) / 2 - \gamma} \varphi \|^2 . \]
  For $\mathcal{G}_-^m$, note first that, by the Cauchy-Schwarz inequality and
  by Lemma~\ref{lem:app-estimate} (since $\gamma < 1 / 2$),
  \[ \begin{array}{ll}
       & \left| \sum_{p + q = k_1} (k_1^{\perp} \cdot p)  (k_1 \cdot q)
       \hat{\varphi}_{n + 1} (p, q, k_{2 : n}) \right|^2\\
       \lesssim & \sum_{p + q = k_1} (1 + | p |^{2 \theta} + | q |^{2
       \theta})^{2 \gamma - 1 - 1 / \theta}\\
       & \times \sum_{p + q = k_1} (1 + | p |^{2 \theta} + | q |^{2
       \theta})^{1 + 1 / \theta - 2 \gamma} | k_1^{\perp} \cdot p |^2  | k_1
       \cdot q |^2 | \hat{\varphi}_{n + 1} (p, q, k_{2 : n}) |^2\\
       \lesssim & (1 + | k_1 |^{2 \theta})^{2 \gamma - 1} \sum_{p + q = k_1}
       (1 + | p |^{2 \theta} + | q |^{2 \theta})^{1 + 1 / \theta - 2 \gamma} |
       k_1^{\perp} \cdot p |^2  | k_1 \cdot q |^2 | \hat{\varphi}_{n + 1} (p,
       q, k_{2 : n}) |^2,
     \end{array} \]
  therefore,
  \[ \begin{array}{ll}
       & \| w (\mathcal{N}) (1 -\mathcal{L}_{\theta})^{- \gamma}
       \mathcal{G}_-^m \varphi \|^2 = \sum_{n \geqslant 0} n! w (n)^2
       \sum_{k_{1 : n}} \left( \prod_{i = 1}^n | 2 \pi k_i |^2 \right) |
       \mathcal{F} ((1 -\mathcal{L}_{\theta})^{- \gamma} \mathcal{G}_-^m
       \varphi)_n (k_{1 : n}) |^2\\
       \lesssim & \sum_{n \geqslant 0} n! w (n)^2 (n + 1)^4 \sum_{k_{1 : n}}
       \left( \prod_{i = 1}^n | 2 \pi k_i |^2 \right) \frac{1}{| k_1 |^4 (1 +
       L_{\theta} (k_{1 : n}))^{2 \gamma}}\\
       & \times \left| \sum_{p + q = k_1} (k_1^{\perp} \cdot p)  (k_1 \cdot
       q) \hat{\varphi}_{n + 1} (p, q, k_{2 : n}) \right|^2\\
       \lesssim & \sum_{n \geqslant 0} n! w (n)^2 (n + 1)^4 \sum_{k_{1 : n}}
       \left( \prod_{i = 1}^n | 2 \pi k_i |^2 \right) \frac{1}{| k_1 |^4 (1 +
       L_{\theta} (k_{1 : n}))^{2 \gamma}} (1 + | k_1 |^{2 \theta})^{2 \gamma
       - 1}\\
       & \times \sum_{p + q = k_1} (1 + | p |^{2 \theta} + | q |^{2
       \theta})^{1 + 1 / \theta - 2 \gamma} | k_1 |^4 | p |^2 | q |^2 |
       \hat{\varphi}_{n + 1} (p, q, k_{2 : n}) |^2\\
       \lesssim & \sum_{n \geqslant 0} n! w (n)^2 (n + 1)^4 \sum_{k_{1 : n}}
       \sum_{p + q = k_1} \left( \prod_{i = 2}^n | 2 \pi k_i |^2 \right) | 2
       \pi p |^2 | 2 \pi q |^2\\
       & \times (1 + | p |^{2 \theta} + | q |^{2 \theta})^{1 + 1 / \theta - 2
       \gamma} | \hat{\varphi}_{n + 1} (p, q, k_{2 : n}) |^2,
     \end{array} \]
  we now let $\ell_1 = p$, $\ell_2 = q$, and $\ell_i = k_{i - 1}$ for $3
  \leqslant i \leqslant n + 1$, so that
  \[ \begin{array}{ll}
       & \| w (\mathcal{N}) (1 -\mathcal{L}_{\theta})^{- \gamma}
       \mathcal{G}_-^m \varphi \|^2\\
       \lesssim & \sum_{n \geqslant 0} n! w (n)^2 (n + 1)^4 \sum_{\ell_{1 : n
       + 1}} \left( \prod_{i = 1}^{n + 1} | 2 \pi \ell_i |^2 \right) (1 + |
       \ell_1 |^{2 \theta} + | \ell_2 |^{2 \theta})^{1 + 1 / \theta - 2
       \gamma} | \hat{\varphi}_{n + 1} (\ell_{1 : n + 1}) |^2\\
       \lesssim & \sum_{n \geqslant 0} n! w (n)^2 (n + 1)^4 \sum_{\ell_{1 : n
       + 1}} \left( \prod_{i = 1}^{n + 1} | 2 \pi \ell_i |^2 \right) (1 + |
       \ell_1 |^{2 \theta} + \cdots + | \ell_{n + 1} |^{2 \theta})^{1 + 1 /
       \theta - 2 \gamma} | \hat{\varphi}_{n + 1} (\ell_{1 : n + 1}) |^2\\
       \lesssim & \sum_{n \geqslant 1} n! w (n - 1)^2 n^3 \sum_{\ell_{1 : n}}
       \left( \prod_{i = 1}^n | 2 \pi \ell_i |^2 \right) (1 + | \ell_1 |^{2
       \theta} + \cdots + | \ell_n |^{2 \theta})^{1 + 1 / \theta - 2 \gamma} |
       \hat{\varphi}_n (\ell_{1 : n}) |^2\\
       \lesssim & \| w (\mathcal{N}- 1) \mathcal{N}^{3 / 2} (1
       -\mathcal{L}_{\theta})^{(1 + 1 / \theta) / 2 - \gamma} \varphi \|^2
     \end{array} \]
  which gives the uniform bound.
  
  \
  
  Let us now discuss the $m$-dependent estimates, we have for
  $\mathcal{G}_+^m$
  \[ \begin{array}{ll}
       & \| w (\mathcal{N}) \mathcal{G}_+^m \varphi \|^2 = \sum_{n \geqslant
       0} n! w (n)^2 \sum_{k_{1 : n}} \left( \prod_{i = 1}^n | 2 \pi k_i |^2
       \right) | \mathcal{F} (\mathcal{G}_+^m \varphi)_n (k_{1 : n}) |^2\\
       \lesssim & \sum_{n \geqslant 2} n! w (n)^2 n^2 \sum_{k_{1 : n}} \left(
       \prod_{i = 1}^n | 2 \pi k_i |^2 \right) \mathbbm{1}_{| k_1 |, | k_2 |,
       | k_1 + k_2 | \leqslant m} \frac{| k_1 + k_2 |^4}{| k_1 |^2 | k_2 |^2}
       | \hat{\varphi}_{n - 1} (k_1 + k_2, k_{3 : n}) |^2\\
       \lesssim & \sum_{n \geqslant 2} n! w (n)^2 n^2\\
       & \times \sum_{k_{1 : n}} \left( \prod_{i = 3}^n | 2 \pi k_i |^2
       \right) | 2 \pi (k_1 + k_2) |^2 \mathbbm{1}_{| k_1 |, | k_2 |, | k_1 +
       k_2 | \leqslant m} | k_1 + k_2 |^{2 \theta} | \hat{\varphi}_{n - 1}
       (k_1 + k_2, k_{3 : n}) |^2\\
       \lesssim & m^2 \sum_{n \geqslant 2} n! w (n)^2 n^2 \sum_{\ell_{1 : n -
       1}} \left( \prod_{i = 1}^{n - 1} | 2 \pi \ell_i |^2 \right) | \ell_1
       |^{2 \theta} | \hat{\varphi}_{n - 1} (\ell_{1 : n - 1}) |^2\\
       \lesssim & m^2 \sum_{n \geqslant 2} n! w (n)^2 n \sum_{\ell_{1 : n -
       1}} \left( \prod_{i = 1}^{n - 1} | 2 \pi \ell_i |^2 \right) L_{\theta}
       (\ell_{1 : n - 1}) | \hat{\varphi}_{n - 1} (\ell_{1 : n - 1}) |^2\\
       \lesssim & m^2 \sum_{n \geqslant 1} n! w (n + 1)^2 (n + 1)^2
       \sum_{\ell_{1 : n}} \left( \prod_{i = 1}^n | 2 \pi \ell_i |^2 \right)
       (1 + L_{\theta} (\ell_{1 : n})) | \hat{\varphi}_n (\ell_{1 : n}) |^2\\
       \lesssim & m^2 \| w (\mathcal{N}+ 1) (1 +\mathcal{N}) (1
       -\mathcal{L}_{\theta})^{1 / 2} \varphi \|^2 .
     \end{array} \]
  Finally, for $\mathcal{G}_-^m$ we have,
  \[ \begin{array}{ll}
       & \| w (\mathcal{N}) \mathcal{G}_-^m \varphi \|^2 = \sum_{n \geqslant
       0} n! w (n)^2 \sum_{k_{1 : n}} \left( \prod_{i = 1}^n | 2 \pi k_i |^2
       \right) | \mathcal{F} (\mathcal{G}_-^m \varphi)_n (k_{1 : n}) |^2\\
       \lesssim & \sum_{n \geqslant 0} n! w (n)^2 (n + 1)^4 \sum_{k_{1 : n}}
       \left( \prod_{i = 1}^n | 2 \pi k_i |^2 \right) \frac{\mathbbm{1}_{| k_1
       |, | p |, | q | \leqslant m}}{| k_1 |^4}\\
       & \times \left| \sum_{p + q = k_1} (k_1^{\perp} \cdot p)  (k_1 \cdot
       q) \hat{\varphi}_{n + 1} (p, q, k_{2 : n}) \right|^2\\
       \lesssim & \sum_{n \geqslant 0} n! w (n)^2 (n + 1)^4\\
       & \times \sum_{k_{1 : n}} \sum_{p + q = k_1} \left( \prod_{i = 2}^n |
       2 \pi k_i |^2 \right) | 2 \pi p |^2 | 2 \pi q |^2 \mathbbm{1}_{| k_1 |,
       | p |, | q | \leqslant m} | k_1 |^2 | \hat{\varphi}_{n + 1} (p, q, k_{2
       : n}) |^2\\
       \lesssim & m^2 \sum_{n \geqslant 0} n! w (n)^2 (n + 1)^3 \sum_{p, q,
       k_{2 : n}} \left( \prod_{i = 2}^n | 2 \pi k_i |^2 \right)\\
       & \times | 2 \pi p |^2 | 2 \pi q |^2 (| p |^{2 \theta} + | q |^{2
       \theta} + | k_2 |^{2 \theta} + \cdots + | k_n |^{2 \theta}) |
       \hat{\varphi}_{n + 1} (p, q, k_{2 : n}) |^2\\
       \lesssim & m^2 \sum_{n \geqslant 0} n! w (n)^2 (n + 1)^3 \sum_{\ell_{1
       : n + 1}} \left( \prod_{i = 1}^{n + 1} | 2 \pi \ell_i |^2 \right)
       L_{\theta} (\ell_{1 : n + 1}) | \hat{\varphi}_{n + 1} (\ell_{1 : n +
       1}) |^2\\
       \lesssim & m^2 \sum_{n \geqslant 1} n! w (n - 1)^2 n^2 \sum_{\ell_{1 :
       n}} \left( \prod_{i = 1}^n | 2 \pi \ell_i |^2 \right) L_{\theta}
       (\ell_{1 : n}) | \hat{\varphi}_n (\ell_{1 : n}) |^2\\
       \lesssim & m^2 \| w (\mathcal{N}- 1) \mathcal{N} (1
       -\mathcal{L}_{\theta})^{1 / 2} \varphi \|^2 .
     \end{array} \]
  
\end{proof*}

\section{Stochastic Navier--Stokes on the plane}\label{s:whole-space}

In this section we prove that the main results of the paper, namely existence
and uniqueness of energy solution for the hyper-viscous Navier--Stokes
eq.~{\eqref{eq:navierstokes}} extends very naturally to the setting of the
whole plane $\mathbb{R}^2$. We will discuss first existence of martingale
solutions via a limiting procedure involving finite volume approximations,
then we will show that the Kolmogorov equation can be solved also in the full
space, which implies, as in the periodic setting, uniqueness in law.

\

\paragraph{The invariant measure}At the beginning of the paper, we introduced
the invariant measure of the problem, i.e., the energy measure~$\mu$ given by
eq.~{\eqref{eq:energy-measure}}. The computation of the covariance in the
current case gives, for any $\varphi, \psi \in \mathcal{S}$,
\[ \mathbb{E} [\omega (\varphi) \omega (\psi)] = \langle (- \Delta)^{1 / 2}
   \varphi, (- \Delta)^{1 / 2} \psi \rangle_{L^2 (\mathbb{R}^2)} \backassign
   \langle \varphi, \psi \rangle_{\dot{H}^1 (\mathbb{R}^2)}, \]
where $\dot{H}^s (\mathbb{R}^d)$ denotes the so-called {\tmem{homogeneous
Sobolev space}} of $L^2 (\mathbb{R}^d)$ functions $f$ having norm
$\int_{\mathbb{R}^d} | \xi |^{2 s} | \hat{f} (\xi) |^2 \mathd \xi$ finite. We
denote by $\mu_{\mathbb{R}^2}$ the energy measure in the case of the whole
space.

\paragraph{A new approximating problem}In order to approximate stochastic
Navier-Stokes equations on the whole space, we study Galerkin approximation
problems on scaled tori $\mathbb{T}^2_{\lambda} \assign \mathbb{R}^2 \setminus
(2 \pi \lambda \mathbb{Z}^2)$, $\lambda > 0$, with the goal to take first the
limit as~$\lambda \rightarrow \infty$, allowing us to pass to the case
of~$\mathbb{R}^2$. For $f : \mathbb{T}^2_{\lambda} \rightarrow \mathbb{R}$, we
define the Fourier transform $\mathcal{F}_{\lambda} (f) = \invbreve{f} :
\lambda^{- 1} \mathbb{Z}^2 \rightarrow \mathbb{R}$ as
\[ \mathcal{F}_{\lambda} (f) (k) = \invbreve{f} (k) =
   \int_{\mathbb{T}_{\lambda}^2} e^{- 2 \pi \iota k \cdot x} f (x) \mathd x,
   \qquad k \in \lambda^{- 1} \mathbb{Z}^2, \]
while the inverse transform is given by
\[ \mathcal{F}_{\lambda}^{- 1} (f) (x) = (2 \pi \lambda)^{- 2} \sum_{k \in
   \lambda^{- 1} \mathbb{Z}^2} e^{2 \pi \iota k \cdot x} f (k), \qquad x \in
   \mathbb{T}^2_{\lambda} . \]
Plancherel theorem now reads as
\[ (2 \pi \lambda)^{- 2} \sum_{k \in \lambda^{- 1} \mathbb{Z}^2}
   \mathcal{F}_{\lambda} (f) (k)  \overline{\mathcal{F}_{\lambda} (g) (k)_{}}
   = \int_{\mathbb{T}_{\lambda}^2} f (x) g (x) \mathd x. \]
The $\mathcal{H}$-norm is now given by
\[ \| \varphi \|_{\mathcal{H}_{\lambda}} = \sum_{n = 0}^{\infty} n! \|
   \varphi_n \|^2_{(H^1_0 (\mathbb{T}_{\lambda}^2))^{\otimes n}} \simeq
   \sum_{n = 0}^{\infty} n! \lambda^{- 2 n} \sum_{k_{1 : n} \in (\lambda^{- 1}
   \mathbb{Z}^2_0)^n} \left( \prod_{i = 1}^n | k_i |^2  \right)  |
   \invbreve{\varphi}_n (k_{1 : n}) |^2 . \]
The Biot-Savart kernel is $K (x) = - (2 \pi)^{- 3} \lambda^{- 2} \iota \sum_{k
\in \lambda^{- 1} \mathbb{Z}^2} k^{\bot} | k |^{- 2} e^{2 \pi \iota k \cdot
x}$, for $x \in \mathbb{T}^2_{\lambda}$, since from the relation $\omega =
\nabla^{\bot} \cdot u$ we get
\[ \invbreve{\omega} (k) = 2 \pi \iota k_2  \invbreve{u}_1 (k) - 2 \pi \iota
   k_1  \invbreve{u}_2 (k) = 2 \pi \iota k^{\bot} \cdot \invbreve{u} (k), \]
which gives $\invbreve{u} (k) = - 2 \pi \iota k^{\top} | 2 \pi k |^{- 2} \cdot
\invbreve{\omega} (k)$.

$\mathcal{L}_{\theta}^{\lambda, m}$ can again be represented in Fourier terms
by~{\eqref{eq:ltheta-fourier}} for $k_1, \ldots, k_n \in \lambda^{- 1}
\mathbb{Z}^2$. The ($\lambda$-)Fourier transform of $\mathcal{G}_+^{\lambda,
m}$ is exactly the same as in {\eqref{eq:gp-fourier}}, while the one of
$\mathcal{G}_-^{\lambda, m}$ is as in~{\eqref{eq:gm-fourier}} but multiplied
by a factor $\lambda^{- 2}$ due to the convolution. Following the proof of
Lemma~\ref{lem:uniform-bds-G}, we get some estimates of
$\mathcal{G}_{\pm}^{\lambda, m}$ uniform both in $m$ and in $\lambda$ (up to
the $\lambda$-dependence of the $\mathcal{H}_{\lambda}$-norm). After getting
this estimate we obtain the same result as in
Lemma~\ref{lemma:galerkin-estimates}, for every $\lambda > 0$.

\begin{proof*}{Proof of Lemma~\ref{lem:uniform-bds-G}}
  We show here the two bounds for $\mathcal{G}_{\pm}^{\lambda, m}$. For
  $\mathcal{G}_+^{\lambda, m}$, the main difference with respect to the proof
  presented in Section~\ref{sec:bounds} is the presence of the $\lambda^{- 2
  n}$ term in the definition of the norm (and, of course, of a different
  Fourier transform), the sum on $k_1 + k_2 = \ell$ in the third step of the
  inequality~{\eqref{eq:ineq1-proofbds}} eats also a term $\lambda^{- 2}$,
  hence thereafter we will have $\lambda^{- 2 (n - 1)}$, which is the correct
  term that will enter the norm at the end of the estimate.
  
  For the term $\mathcal{G}_-^{\lambda, m}$ we will use the fact that, by
  Lemma~\ref{lem:app-estimate},
  \begin{eqnarray*}
    &  & \left| \lambda^{- 2} \sum_{p + q = k_1} (k_1^{\perp} \cdot p)  (k_1
    \cdot q) \invbreve{\varphi}_{n + 1} (p, q, k_{2 : n}) \right|^2\\
    & \lesssim & \lambda^{- 2} \sum_{p + q = k_1} (1 + | p |^{2 \theta} + | q
    |^{2 \theta})^{2 \gamma - 1 - 1 / \theta}\\
    &  & \times \lambda^{- 2} \sum_{p + q = k_1} (1 + | p |^{2 \theta} + | q
    |^{2 \theta})^{1 + 1 / \theta - 2 \gamma} | k_1^{\perp} \cdot p |^2  | k_1
    \cdot q |^2 | \invbreve{\varphi}_{n + 1} (p, q, k_{2 : n}) |^2\\
    & \lesssim & (1 + | k_1 |^{2 \theta})^{2 \gamma - 1} \lambda^{- 2}
    \sum_{p + q = k_1} (1 + | p |^{2 \theta} + | q |^{2 \theta})^{1 + 1 /
    \theta - 2 \gamma} | k_1^{\perp} \cdot p |^2  | k_1 \cdot q |^2 |
    \invbreve{\varphi}_{n + 1} (p, q, k_{2 : n}) |^2,
  \end{eqnarray*}
  which implies that $\| w (\mathcal{N}) (1 -\mathcal{L}_{\theta})^{- \gamma}
  \mathcal{G}_-^m \varphi \|^2$ can be bounded as in the proof in
  Section~\ref{sec:bounds} with the extra term $\lambda^{- 2 (n + 1)}$, that
  is exactly the term that will enter the norm, yielding the claimed estimate.
\end{proof*}

\paragraph{Existence for the cylinder martingale problem}We now want to give a
proper definition of infinitesimal generator and of martingale problem on the
whole space~$\mathbb{R}^2$ (cfr.~Section~\ref{sec:cyl-mart}), and in
particular we want to show the existence of solutions obtained in
Theorem~\ref{thm:existence-cyl-martingalepb} for the present scenario.

Let us therefore focus our attention on the proof of
Theorem~\ref{thm:existence-cyl-martingalepb}. Following Step~1, we want to
show tightness of the sequence $(\omega^{\lambda, m})_{\lambda, m}$. With
Step~2, we are going to conclude the existence for the martingale problem as
$\lambda, m \rightarrow \infty$. Let us assume for the moment that we can
associate to each cylinder function, $\varphi \in \tmop{Cyl}_{\mathbb{R}^2}$,
of the form $\varphi (\omega) = \Phi (\omega (f_1), \ldots, \omega (f_n))$,
where $f_1, \ldots, f_n \in \mathcal{S} (\mathbb{R}^2)$, to $\varphi^{\lambda}
(\omega) = \Phi (\omega (f_1^{\lambda}), \ldots, \omega (f_n^{\lambda}))$ with
$f_1^{\lambda}, \ldots, f^{\lambda}_n \in \mathcal{S}
(\mathbb{T}^2_{\lambda})$ in such a way that
\[ \varphi_{\ell}^{\lambda} (k_{1 : \ell}) = \varphi_{\ell} (k_{1 : \ell}),
   \qquad \text{for } k_1, \ldots, k_{\ell} \in \lambda^{- 1} \mathbb{Z}^2, \]
where we are exploiting the chaos decompositions of $\varphi$
and~$\varphi^{\lambda}$. It is then possible to recover the
bound~{\eqref{eq:bd-tightness}} for $\omega^{\lambda, m}$ and therefore
tightness, since $\mathcal{L}_{\theta}^{\lambda, m}$ has
$\mathcal{L}_{\theta}^{\infty}$ as a limit of a sum converging to an integral.
As regards Step~2, the crucial part is to pass from $\mathcal{L}^{\lambda, m}$
to $\mathcal{L}^{\infty, m}$ in~{\eqref{eq:limit-existence}} when taking the
limit as $\lambda \rightarrow \infty$. To do this, we have to show that $\| (1
-\mathcal{L}_{\theta})^{- 1 / 2} (\mathcal{L}^{\lambda, m} \varphi
-\mathcal{L}^{\infty, m} \varphi) \|_{\mathcal{H}_{\lambda}}$ tends to~$0$
when $\lambda \rightarrow \infty$, which reduces to prove that
\[ \| (1 -\mathcal{L}_{\theta})^{- 1 / 2} (\mathcal{G}_-^{\lambda, m} \varphi
   -\mathcal{G}_-^{\infty, m} \varphi) \|_{\mathcal{H}_{\lambda}} \rightarrow
   0, \qquad \text{as } \lambda \rightarrow \infty . \]
Comparing the explicit formulas for $\mathcal{G}_-^{\lambda, m}$ and
$\mathcal{G}_-^{\infty, m}$, we have that the only difference between the two
is the fact that in $\mathcal{G}^{\lambda, m}_-$ the sum given by the
convolution becomes an integral when taking the limit. In particular,
\begin{eqnarray*}
  &  & \| (1 -\mathcal{L}_{\theta})^{- 1 / 2} (\mathcal{G}_-^{\lambda, m}
  \varphi -\mathcal{G}_-^{\infty, m} \varphi) \|_{\mathcal{H}_{\lambda}}\\
  & \simeq & \sum_{n \geqslant 0} n! \lambda^{- 2 n} n (n + 1) \sum_{k_{1 :
  n} \in (\lambda^{- 1} \mathbb{Z}^2)^n} \left( \prod_{i = 1}^n | k_i |^2
  \right) \frac{1}{(1 + L_{\theta} (k_{1 : n}))^{1 / 2}} \times\\
  &  & \times \left| \lambda^{- 2} \sum_{p + q = k_1} \mathbbm{1}_{| k_1 |, |
  p |, | q | \leqslant m}  \frac{k_1^{\bot} \cdot p | q |^2}{| k_1 |^2}
  \mathcal{F}_{\lambda} (\varphi_{n + 1}) (p, q, k_{3 : n}) - \right. \\
  &  & \qquad \qquad \qquad \left. - \int_{\mathbb{R}^2} \mathbbm{1}_{| k_1
  |, | s |, | k_1 - s | \leqslant m}  \frac{k_1^{\bot} \cdot s | k_1 - s
  |^2}{| k_1 |^2} \mathcal{F}_{\lambda} (\varphi_{n + 1}) (s, k_1 - s, k_{3 :
  n}) \mathd s \right|^2 \\
  & \lesssim & \sum_{n \geqslant 0} n! \lambda^{- 2 n} n (n + 1) \sum_{k_{1 :
  n} \in (\lambda^{- 1} \mathbb{Z}^2)^n} \left( \prod_{i = 2}^n | k_i |^2
  \right)  \frac{\mathbbm{1}_{| k_1 | \leqslant m}}{(1 + L_{\theta} (k_{1 :
  n}))^{1 / 2}} \times\\
  &  & \times \left| \lambda^{- 2} \sum_p \mathbbm{1}_{| p |, | k_1 - p |
  \leqslant m} p | k_1 - p |^2 \mathcal{F}_{\lambda} (\varphi_{n + 1}) (p, k_1
  - p, k_{3 : n}) \right.,\\
  &  & \qquad \qquad \qquad \qquad \left. - \int_{\mathbb{R}^2}
  \mathbbm{1}_{| s |, | k_1 - s | \leqslant m} s | k_1 - s |^2
  \mathcal{F}_{\lambda} (\varphi_{n + 1}) (s, k_1 - s, k_{3 : n}) \mathd s
  \right|^2,
\end{eqnarray*}
and the right-hand side goes to zero as $\lambda \rightarrow \infty$.

In both cases it is important to understand the role played by the test
functions with respect to the norm (which depends on~$\lambda$) we are
considering. We want in fact to take $\varphi \in \tmop{Cyl}_{\mathbb{R}^2}$,
but we will evaluate it on $\omega^{m, \lambda}$. This is a non-trivial step
and it is worth to spend a few words on~it adopting a chaos expansion point of
view. To apply the Mitoma's criterion we only need to test on linear
functions, see~{\cite{Mitoma1983}}, while for the second step it suffices to
consider functions $\varphi$ of the form
\[ \varphi (\omega) = \of e^{\iota \langle \omega, f \rangle} : = e^{\iota
   \omega (f) + \frac{1}{2} \| f \|^2_{}}, \qquad \text{for } f \in
   \mathcal{S} (\mathbb{R}^2), \]
where $\of \nosymbol e^{\iota g} :$ indicates the Wick exponential of~$g$, see
also~{\cite{gubinelli2019random,gubinelli2015lectures}}. Focusing on the
latter case, we can identify $\varphi$ with the sequence of chaoses
$(\varphi_n (k_{1 : n}))_n$ where $\varphi_n (k_{1 : n}) = \iota^n \hat{f}
(k_1) \cdots \hat{f} (k_n)$ for $k_{1 : n} \in (\mathbb{R}^2)^n$, so that,
after noticing
\[ \varphi^{\lambda} (\omega^{m, \lambda}) \assign \varphi (\omega^{m,
   \lambda}) = e^{\iota \omega^{m, \lambda} (f) + \frac{1}{2} \| f
   \|^2_{\mathcal{S} (\mathbb{R}^2)}} = C_{\lambda} (f) e^{\iota
   \omega^{\lambda} (f) + \frac{1}{2} \| f \|^2_{\mathcal{S}
   (\mathbb{T}^2_{\lambda})}}, \]
with $C_{\lambda} (f) \rightarrow 1$ as $\lambda \rightarrow \infty$, we have
$\varphi^{\lambda}_n (k_{1 : n}) = \iota^n C_{\lambda} (f) \invbreve{f} (k_1)
\cdots \invbreve{f} (k_n)$ for $k_{1 : n} \in (\lambda^{- 1} \mathbb{Z}^2)^n$.

\paragraph{The Kolmogorov backward equation in the plane}Let us now turn to
the study of the Kolmogorov backward equation for the whole space setting. In
order to get the result we have obtained in Section~\ref{s:kolmogorov} in the
case of periodic boundary conditions, we need first to give a proper
description of the space we are working~in, that is
$\mathcal{H}_{\mathbb{R}^2} = L^2 (\mu_{\mathbb{R}^2})$. In particular, we
need that the operator $\mathcal{L}$ is well-defined on this space.

We start by describing the homogeneous Sobolev space~$\dot{H}^1
(\mathbb{R}^2)$. As remarked in Proposition~1.34
of~{\cite{bahouri2011fourier}} this space is not a space of functions. Indeed
consider a smooth bump function $\theta : \mathbb{R}^2 \rightarrow \mathbb{R}$
compactly supported and such that $\theta (0) = 1$. Let $\theta_{\varepsilon}
(x) = \theta (\varepsilon x)$, then as $\varepsilon \rightarrow 0$ we have $\|
\theta_{\varepsilon} \|_{\dot{H}^1 (\mathbb{R}^2)} \approx 1$,
$\theta_{\varepsilon} \rightarrowlim 1$ pointwise and $\theta_{\varepsilon}
\rightarrow 0$ weakly in $\dot{H}^1 (\mathbb{R}^2)$. The elements of
$\dot{H}^1 (\mathbb{R}^2)$ consists of equivalence classes of functions modulo
constants and we will have to take this into account in our analysis. We say
that $\varphi \in \dot{H}^1 (\mathbb{R}^2)$ if there exists a tempered
distribution $\tilde{\varphi} \in \mathcal{S}' (\mathbb{R}^2)$ such that
\[ \| \tilde{\varphi} \|_{\dot{H}^1 (\mathbb{R}^2)} = \int_{\mathbb{R}^2} | k
   |^2  | \widehat{\tilde{\varphi}} (k) |^2 \mathd k < \infty, \]
and for which
\begin{equation}
  \langle \varphi, \psi \rangle_{\dot{H}^1 (\mathbb{R}^2)} = \langle
  \tilde{\varphi}, \psi \rangle_{\dot{H}^1 (\mathbb{R}^2)}, \qquad \text{for
  all } \psi \in \mathcal{S} (\mathbb{R}^2) . \label{eq:sob-def}
\end{equation}
Note that the equality~{\eqref{eq:sob-def}} implies an identification of
elements whose difference is a constant. Indeed, for $C \in \mathbb{R}$,
\[ \langle \varphi, \psi \rangle_{\dot{H}^1 (\mathbb{R}^2)} = \langle
   \tilde{\varphi}, \psi \rangle_{\dot{H}^1 (\mathbb{R}^2)} = \tilde{\varphi}
   (| \mathD |^2 \psi) + C \widehat{\delta_0} (| \mathD |^2 \psi) =
   \widehat{\tilde{\varphi}} (| \cdot |^2 \hat{\psi} (\cdot)), \]
where $\mathD$ denotes the derivative operator. This means that we identify
$\varphi$ with $\tilde{\varphi} + C$ and write
\[ \| \varphi \|_{\dot{H}^1 (\mathbb{R}^2)} = \int_{\mathbb{R}^2} | k |^2  |
   \hat{\varphi} (k) |^2 \mathd k. \]
As a consequence, the tensor product $(\dot{H}^1 (\mathbb{R}^2))^{\otimes n}$,
understood as a tensor product of Hilbert spaces, can be described in the
following way: for every $\varphi \in (\dot{H}^1 (\mathbb{R}^2))^{\otimes n}$,
there exists $\tilde{\varphi} \in \mathcal{S}'_s ((\mathbb{R}^2)^n)$, the
space of symmetric tempered distributions on $(\mathbb{R}^2)^n$, such that
\[ \| \varphi \|_{(\dot{H}^1 (\mathbb{R}^2))^{\otimes n}}^2 =
   \int_{(\mathbb{R}^2)^n} \left( \prod_{i = 1}^n | k_i |^2  \right) |
   \widehat{\tilde{\varphi}} (k_{1 : n}) |^2 \mathd k_{1 : n} < \infty, \]
and for which
\[ \langle \varphi, \psi \rangle_{(\dot{H}^1 (\mathbb{R}^2))^{\otimes n}} =
   \widehat{\tilde{\varphi}} \left( \left( \prod_{i = 1}^n | \cdot_i |^2 
   \right) \hat{\psi} \right), \qquad \psi \in \mathcal{S}_s
   ((\mathbb{R}^2)^n), \]
that leaves the freedom to change $\widehat{\tilde{\varphi}}$ in $Z_n =
\bigcup_{i = 1}^n \{ k_i = 0 \} \subset (\mathbb{R}^2)^n$ and defines it
modulo a symmetric distribution $\eta$ whose Fourier transform is supported
in~$Z_n$. Therefore, we can identify $\varphi$ with $\tilde{\varphi} + \eta$,
where $\left( \prod_{i = 1}^n | k_i |^2  \right) \hat{\eta} (k_{1 : n}) = 0$.

Let us now study the operators~$\mathcal{G}_{\pm}$. With the identification
above we can define
\[ \begin{array}{lll}
     \mathcal{F} (\mathcal{G}_+^m \varphi)_n (k_{1 : n}) & = & (n - 1) \chi_m
     (k_1, k_2, k_1 + k_2) \frac{(k_1^{\perp} \cdot (k_1 + k_2)) ((k_1 + k_2)
     \cdot k_2)}{| k_1 |^2 | k_2 |^2} \hat{\varphi}_{n - 1} (k_1 + k_2, k_{3 :
     n}),\\
     \mathcal{F} (\mathcal{G}_-^m \varphi)_n (k_{1 : n}) & = & (2 \pi)^2 (n +
     1) n \int_{\mathbb{R}^2} \chi_m (p, k_1 - p, k_1) \frac{(k_1^{\perp}
     \cdot p) (k_1 \cdot q)}{| k_1 |^2} \hat{\varphi}_{n + 1} (p, k_1 - p,
     k_{2 : n}),
   \end{array} \]
with $\chi_m$ a smooth function such that $\chi_m (k_1, k_2, k_3) \approx
\mathbbm{1}_{| k_1 |, | k_2 |, | k_3 | \leqslant m}$. These formulas have to
be understood via duality
\begin{eqnarray*}
  \langle \psi, (\mathcal{G}_+^m \varphi)_n \rangle_{(\dot{H}^1)^{\otimes n}}
  & = & (n - 1) \int_{(\mathbb{R}^2)^n} \left[ \prod_{i = 1}^n | k_i |^2
  \right] \chi_m (k_1, k_2, k_1 + k_2)  \frac{(k_1^{\perp} \cdot (k_1 + k_2))
  ((k_1 + k_2) \cdot k_2)}{| k_1 |^2 | k_2 |^2} \\
  &  & \times \hat{\varphi}_{n - 1} (k_1 + k_2, k_{3 : n})  \hat{\psi}_n
  (k_{1 : n}) \mathd k_1 \cdots \mathd k_n\\
  & = & (n - 1) \int_{(\mathbb{R}^2)^n} \left[ \prod_{i = 3}^n | k_i |^2
  \right] \chi_m (k_1, k_2, k_1 + k_2)  (k_1^{\perp} \cdot (k_1 + k_2)) ((k_1
  + k_2) \cdot k_2)\\
  &  & \times \hat{\varphi}_{n - 1} (k_1 + k_2, k_{3 : n})  \hat{\psi}_n
  (k_{1 : n}) \mathd k_1 \cdots \mathd k_n .
\end{eqnarray*}
In order to check that this definition is correct, we need to make sure that
whenever $\hat{\varphi}_{n - 1}$ is supported in $Z_{n - 1}$ or when
$\hat{\psi}$ is supported in $Z_n$ the result is zero. This is obvious for
$\hat{\psi}$, so let us check it for $\hat{\varphi}_{n - 1}$. Assume
$\hat{\varphi}_{n - 1}$ is supported in $Z_{n - 1}$, then either $k_1 + k_2 =
0$ or $k_i = 0$ for some $i = 3, \ldots, n$. In the first case the result is
zero due to the multiplicative factor $(k_1^{\perp} \cdot (k_1 + k_2)) ((k_1 +
k_2) \cdot k_2)$, while in the second the result is again zero because of the
factor $\prod_{i = 3}^n | k_i |^2$. The same works for $\mathcal{G}_-$ since
it is the adjoint of~$\mathcal{G}_+$.

The expression of the norms, and therefore the results about the estimates on
the operator and so on, are exactly the same as in the periodic setting modulo
changing the sums into integrals. As already shown in Section~\ref{sec:uniq}
for the torus case, the existence and uniqueness result for the Kolmogorov
backward equation yields, via duality, uniqueness of solutions to the cylinder
martingale problem also for the case of the whole space~$\mathbb{R}^2$.

\appendix\section{Some auxiliary results}\label{s:app-A}

\begin{lemma}
  \label{lem:app-estimate}Let $C, \beta \geqslant 0$, $\alpha > (d + \beta) /
  (2 \theta)$. Then, for every $\lambda$ large,
  \[ \lambda^{- 2} \sum_{p \in \lambda^{- 1} \mathbb{Z}^d} \frac{| p
     |^{\beta}}{(| p |^{2 \theta} + | k - p |^{2 \theta} + C)^{\alpha}}
     \lesssim (| k |^{2 \theta} + C)^{(\beta + d) / (2 \theta) - \alpha},
     \qquad k \in \lambda^{- 1} \mathbb{Z}^d, \]
  uniformly in~$\lambda$.
\end{lemma}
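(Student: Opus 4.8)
The plan is to reduce the lattice sum to an elementary integral via scaling. Write $R \assign (| k |^{2 \theta} + C)^{1 / (2 \theta)}$. If $R = 0$, i.e.\ $k = 0$ and $C = 0$, the right-hand side is $+ \infty$ because $(\beta + d) / (2 \theta) - \alpha < 0$, so there is nothing to prove; assume therefore $R > 0$. The first step is the pointwise lower bound on the denominator
\[ | p |^{2 \theta} + | k - p |^{2 \theta} + C \gtrsim | p |^{2 \theta} + R^{2 \theta}, \qquad p \in \mathbb{R}^d . \]
Indeed $| p |^{2 \theta} + | k - p |^{2 \theta} + C \geqslant | p |^{2 \theta}$ trivially, whereas the triangle inequality $| p | + | k - p | \geqslant | k |$ together with the convexity of $t \mapsto t^{2 \theta}$ on $[0, \infty [$ (valid since $2 \theta > 1$) gives $| p |^{2 \theta} + | k - p |^{2 \theta} \geqslant 2^{1 - 2 \theta} | k |^{2 \theta}$, hence $| p |^{2 \theta} + | k - p |^{2 \theta} + C \geqslant 2^{1 - 2 \theta} (| k |^{2 \theta} + C) = 2^{1 - 2 \theta} R^{2 \theta}$; averaging the two bounds yields the claim. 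Consequently it suffices to prove
\[ \lambda^{- d} \sum_{p \in \lambda^{- 1} \mathbb{Z}^d} \frac{| p |^{\beta}}{(R^{2 \theta} + | p |^{2 \theta})^{\alpha}} \lesssim R^{\beta + d - 2 \theta \alpha} . \]

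Next I would compare this lattice sum with the integral over $\mathbb{R}^d$ by a dyadic decomposition. Since $k \in \lambda^{- 1} \mathbb{Z}^d$, one has $R \geqslant | k | \geqslant \lambda^{- 1}$ whenever $k \neq 0$, while if $k = 0$ then $R = C^{1 / (2 \theta)}$ is a fixed positive number, so again $R \geqslant \lambda^{- 1}$ for $\lambda$ large; in all cases we may thus assume $\lambda^{- 1} \lesssim R$. Split $\lambda^{- 1} \mathbb{Z}^d$ into the core $\{ | p | \leqslant 2 R \}$ and the dyadic shells $\{ 2^j R < | p | \leqslant 2^{j + 1} R \}$, $j \geqslant 1$. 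On the core the summand is $\lesssim R^{\beta - 2 \theta \alpha}$ and $\lambda^{- d}$ times the number of lattice points there is $\lesssim (R + \lambda^{- 1})^d \lesssim R^d$, giving a contribution $\lesssim R^{\beta + d - 2 \theta \alpha}$. On the $j$-th shell the summand is $\simeq (2^j R)^{\beta - 2 \theta \alpha}$ (here $R^{2 \theta} + | p |^{2 \theta} \simeq (2^j R)^{2 \theta}$ since $j \geqslant 1$) and $\lambda^{- d}$ times the number of lattice points is $\lesssim (2^j R)^d$ (using $2^j R \geqslant R \gtrsim \lambda^{- 1}$), so its contribution is $\lesssim (2^j R)^{\beta + d - 2 \theta \alpha}$; summing the geometric series in $j$, which converges because $\beta + d - 2 \theta \alpha < 0$, again yields $\lesssim R^{\beta + d - 2 \theta \alpha}$. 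Equivalently, one may bound the summand on each cell by a constant multiple of its cell-average — the integrand being radial and, away from the origin, slowly varying on cells of side $\lambda^{- 1} \lesssim R$ — dominate the sum by $\int_{\mathbb{R}^d} | p |^{\beta} (R^{2 \theta} + | p |^{2 \theta})^{- \alpha} \mathd p$, and compute this via the change of variables $p = R q$, which gives $R^{\beta + d - 2 \theta \alpha} \int_{\mathbb{R}^d} | q |^{\beta} (1 + | q |^{2 \theta})^{- \alpha} \mathd q$, the last integral being finite precisely because $\alpha > (\beta + d) / (2 \theta)$ (and $\beta \geqslant 0$ handles integrability at the origin).

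The convexity inequality and the geometric summation are routine; the only genuine point requiring care is the uniformity in $\lambda$ of the lattice-to-integral comparison, that is, checking that the cell corrections near the transition radius $| p | \simeq R$ do not spoil the bound. This is exactly where the assumption that $k$ be a lattice point (so that $R \gtrsim \lambda^{- 1}$) enters and is the reason for the qualifier ``$\lambda$ large''. In all the places where this lemma is applied one has $C \geqslant 1$, hence $R \geqslant 1$, and this subtlety is immaterial.
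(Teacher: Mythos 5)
Your proof is correct and follows the paper's own route: first bound the denominator below by a constant multiple of $|p|^{2\theta}+|k|^{2\theta}+C$, then compare the lattice sum with the integral $\int_{\mathbb{R}^d}|y|^{\beta}(|y|^{2\theta}+|k|^{2\theta}+C)^{-\alpha}\,\mathrm{d}y$, which scales to give the stated exponent. The only substantive difference is that you justify the Riemann-sum-to-integral comparison explicitly via dyadic shells, using that $k\in\lambda^{-1}\mathbb{Z}^d$ (or $\lambda$ large) forces $(|k|^{2\theta}+C)^{1/(2\theta)}\gtrsim\lambda^{-1}$ --- a step the paper simply asserts --- and you correctly read the normalisation as $\lambda^{-d}$, which is what the general-$d$ statement requires and what the paper uses in its application with $d=2$.
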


\begin{proof}
  Since $| p |^{2 \theta} + | k - q |^{2 \theta} \gtrsim | p |^{2 \theta} + |
  k |^{2 \theta}$, we have
  \[ \begin{array}{lll}
       \lambda^{- 2} \sum_p \frac{| p |^{\beta}}{(| p |^{2 \theta} + | k - p
       |^{2 \theta} + C)^{\alpha}} & \lesssim & \int_{\mathbb{R}^d} \frac{| y
       |^{\beta}}{(| y |^{2 \theta} + | k |^{2 \theta} + C)^{\alpha}} \mathd y
     \end{array} \]
  By scaling
  \[ \int_{\mathbb{R}^d} \frac{| y |^{\beta}}{(| y |^{2 \theta} + | k |^{2
     \theta} + C)^{\alpha}} \mathd y = (| k |^{2 \theta} + C)^{(\beta + d) / 2
     \theta - \alpha} \int_{\mathbb{R}^d} \frac{| y |^{\beta}}{(| y |^{2
     \theta} + 1)^{\alpha}} \mathd y \]
  and the integral is finite if $\beta - 2 \theta \alpha < - d$.
\end{proof}

\begin{lemma}
  \label{lemma:schauder}We have, for any $T > 0$, $\gamma > 0$,
  \begin{eqnarray*}
    \sup_{0 \leqslant t \leqslant T} \| (1 +\mathcal{N})^p (1
    -\mathcal{L}_{\theta})^{1 + \gamma} \psi (t) \| & \leqslant & \| (1
    +\mathcal{N})^p (1 -\mathcal{L}_{\theta})^{1 + \gamma} \psi (0) \|\\
    &  & + \sup_{0 \leqslant t \leqslant T} \| (1 +\mathcal{N})^p (1
    -\mathcal{L}_{\theta})^{\gamma} (\partial_t - (1 -\mathcal{L}_{\theta}))
    \psi (t) \| .
  \end{eqnarray*}
\end{lemma}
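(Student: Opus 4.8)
The plan is to read the statement as a parabolic maximal–regularity (Schauder) estimate for the linear evolution equation driven by $-(1-\mathcal{L}_{\theta})$ and to prove it by Duhamel's formula together with the spectral calculus for $1-\mathcal{L}_{\theta}$. First I would record the structural facts. By the dissipativity identity $\langle\varphi,\mathcal{L}_{\theta}\varphi\rangle=-\|(-\mathcal{L}_{\theta})^{1/2}\varphi\|^{2}$ the operator $1-\mathcal{L}_{\theta}$ is self-adjoint with spectrum in $[1,\infty)$, acting on the $n$-th chaos as the Fourier multiplier $1+(2\pi)^{2\theta}L_{\theta}(k_{1:n})$ (cf.~\eqref{eq:ltheta-fourier}); hence it commutes with $\mathcal{N}$, with $(1+\mathcal{N})^{p}$, and with all its own fractional powers, and $-(1-\mathcal{L}_{\theta})$ generates a strongly continuous contraction semigroup $S(t)=e^{-t(1-\mathcal{L}_{\theta})}$ with $\|S(t)\|_{\mathcal{H}\to\mathcal{H}}\leqslant1$ which commutes with $(1+\mathcal{N})^{p}$ and $(1-\mathcal{L}_{\theta})^{\gamma}$. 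Writing $g(t)$ for the source term, so that $\psi$ solves $\partial_{t}\psi+(1-\mathcal{L}_{\theta})\psi=g$, Duhamel's formula gives $\psi(t)=S(t)\psi(0)+\int_{0}^{t}S(t-s)g(s)\,\mathd s$.

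Applying the commuting operator $(1+\mathcal{N})^{p}(1-\mathcal{L}_{\theta})^{1+\gamma}$ and distributing the powers of $1-\mathcal{L}_{\theta}$ between semigroup and source yields
\[
(1+\mathcal{N})^{p}(1-\mathcal{L}_{\theta})^{1+\gamma}\psi(t)=S(t)\,(1+\mathcal{N})^{p}(1-\mathcal{L}_{\theta})^{1+\gamma}\psi(0)+\int_{0}^{t}(1-\mathcal{L}_{\theta})S(t-s)\,h(s)\,\mathd s,
\]
with $h(s):=(1+\mathcal{N})^{p}(1-\mathcal{L}_{\theta})^{\gamma}g(s)$. The boundary term is bounded by $\|(1+\mathcal{N})^{p}(1-\mathcal{L}_{\theta})^{1+\gamma}\psi(0)\|$ by contractivity of $S$, which is exactly the first term on the right-hand side of the claim, so the whole content reduces to the estimate $\bigl\|\int_{0}^{t}(1-\mathcal{L}_{\theta})S(t-s)h(s)\,\mathd s\bigr\|\leqslant\sup_{0\leqslant s\leqslant t}\|h(s)\|$.

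For this last bound an operator-norm estimate is useless, since $\|(1-\mathcal{L}_{\theta})S(r)\|_{\mathcal{H}\to\mathcal{H}}=\sup_{\lambda\geqslant1}\lambda e^{-r\lambda}$ is of order $r^{-1}$ near $r=0$ and hence not integrable; the idea is to use the cancellation present mode by mode. Diagonalising $1-\mathcal{L}_{\theta}$ (together with $\mathcal{N}$) and denoting by $\widehat h(s,\lambda)$ the component of $h(s)$ at spectral value $\lambda\geqslant1$, the scalar kernel satisfies the uniform bound
\[
\int_{0}^{t}\lambda\,e^{-(t-s)\lambda}\,\mathd s=1-e^{-t\lambda}\leqslant1,\qquad\lambda\geqslant1,\ t\geqslant0,
\]
so the $\lambda$-component of the integral is controlled by $\sup_{0\leqslant s\leqslant t}|\widehat h(s,\lambda)|$; squaring and integrating over the spectrum then produces the bound in terms of $\sup_{0\leqslant s\leqslant t}\|h(s)\|=\sup_{0\leqslant s\leqslant t}\|(1+\mathcal{N})^{p}(1-\mathcal{L}_{\theta})^{\gamma}g(s)\|$. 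Combining with the boundary term and taking the supremum over $t\in[0,T]$ (the constant being uniform in $T$) finishes the proof. The hard part is precisely this Duhamel integral: everything else is bookkeeping with commuting spectral multipliers, whereas here one must genuinely replace the divergent operator-norm bound by the uniform pointwise bound $1-e^{-t\lambda}\leqslant1$ and control the resulting mode-wise supremum in $L^{2}$ of the spectrum.
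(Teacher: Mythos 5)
Your proposal follows the same skeleton as the paper's proof: read the right-hand side as the source $g=(\partial_t+(1-\mathcal{L}_\theta))\psi$ (both you and the paper silently correct the sign written in the statement), apply Duhamel's formula with $S_t=e^{-t(1-\mathcal{L}_\theta)}$, dispose of the initial datum by contractivity, and control the Duhamel integral by exploiting spectral localisation instead of the non-integrable operator-norm bound $\|(1-\mathcal{L}_\theta)S_r\|\sim r^{-1}$. The one substantive difference is the granularity of the decomposition: you diagonalise completely and use the exact identity $\int_0^t\lambda e^{-(t-s)\lambda}\,\mathd s=1-e^{-t\lambda}\leqslant1$ mode by mode, whereas the paper works with dyadic spectral blocks $\Psi_i=\mathbbm{1}_{|1-\mathcal{L}_\theta|\sim2^i}\Psi$ and splits the time integral at $t-2^{-i}$, trading the smoothing bound $(t-s)^{-1-\gamma}$ against the crude bound $2^{(1+\gamma)i}$ to obtain $\lesssim 2^{i\gamma}\sup_s\|\Psi_i(s)\|$ per block. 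Your per-mode computation is sharper and avoids the dyadic bookkeeping.

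There is, however, a gap in your recombination step. The mode-wise bound controls the $\lambda$-component of the Duhamel integral by $\sup_{0\leqslant s\leqslant t}|\widehat h(s,\lambda)|$, so ``squaring and integrating over the spectrum'' yields $\bigl(\int_\Lambda\sup_{s}|\widehat h(s,\lambda)|^2\,\mu(\mathd\lambda)\bigr)^{1/2}$, and this quantity is \emph{not} dominated by $\sup_s\|h(s)\|=\sup_s\bigl(\int_\Lambda|\widehat h(s,\lambda)|^2\,\mu(\mathd\lambda)\bigr)^{1/2}$: the needed interchange would be $\int\sup\leqslant\sup\int$, which goes the wrong way (consider a source whose spectral support moves in time, so that each mode attains norm one at some instant while the norm at each fixed instant stays one). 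This is precisely the obstruction that makes endpoint, sup-in-time maximal regularity delicate. In fairness, the paper's own proof performs the same interchange one level up, passing from $\sum_i\sup_s\|(1-\mathcal{L}_\theta)^{\gamma}\Psi_i(s)\|^2$ to $\sup_s\sum_i\|(1-\mathcal{L}_\theta)^{\gamma}\Psi_i(s)\|^2$, so your argument faithfully reproduces the paper's, gap included --- indeed your finer decomposition makes the missing step more visible. Closing it requires some additional input, e.g.\ quantitative time-regularity of the spectral profile of the source, or sacrificing an $\varepsilon$ of smoothing and bounding $(1-\mathcal{L}_\theta)^{1+\gamma-\varepsilon}\psi$ instead, which is how such Schauder-type estimates are usually salvaged.
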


\begin{proof}
  The proof is standard and proceeds by spectral calculus. Write $\Psi (t)
  \assign (\partial_t - (1 -\mathcal{L}_{\theta})) \psi (t)$
  \[ \Psi_i (s) = \mathbbm{1}_{| 1 -\mathcal{L}_{\theta} | \sim 2^i} \Psi (s),
  \]
  where $\mathbbm{1}_{| 1 -\mathcal{L}_{\theta} | \sim 2^i}$ denotes a dyadic
  partition of unity such that $\| \varphi \|^2 \approx \sum_i \|
  \mathbbm{1}_{| 1 -\mathcal{L}_{\theta} | \sim 2^i} \varphi \|^2$ for any
  $\varphi$.
  
  Let $S_t = e^{- t (1 -\mathcal{L}_{\theta})}$, so that
  \[ \psi (t) = S_t \psi (0) + \int_0^t S_{t - s} \Psi (s) \mathd s. \]
  Then, using $\| (1 -\mathcal{L}_{\theta})^{1 + \gamma} S_{t - s} \psi \|
  \lesssim ((t - s)^{- 1 - \gamma} \vee 1) \| \psi \|$ and $\| (1
  -\mathcal{L}_{\theta})^{1 + \gamma} \mathbbm{1}_{| 1 -\mathcal{L}_{\theta} |
  \sim 2^i} \| \lesssim 2^{(1 + \gamma) i}$, and letting $\delta = 2^{- i}$,
  we have
  \[ \begin{array}{lll}
       &  & \left\| (1 -\mathcal{L}_{\theta})^{1 + \gamma} \int_0^t S_{t - s}
       \Psi_i (s) \mathd s \right\|\\
       & \leqslant & \left\| (1 -\mathcal{L}_{\theta})^{1 + \gamma} \int_0^{t
       - \delta} S_{t - s} \Psi_i (s) \mathd s \right\| + \left\| (1
       -\mathcal{L}_{\theta})^{1 + \gamma} \int_{t - \delta}^t S_{t - s}
       \Psi_i (s) \mathd s \right\|\\
       & \lesssim & \int_0^{t - \delta} ((t - s)^{- 1 - \gamma} \vee 1) \|
       \Psi_i (s) \| \mathd s + 2^{(1 + \gamma) i} \int_{t - \delta}^t \| S_{t
       - s} \Psi_i (s) \| \mathd s\\
       & \lesssim & (\delta^{- \gamma} + 2^{i (1 + \gamma)} \delta) \sup_{0
       \leqslant s \leqslant T} \| \Psi_i (s) \|\\
       & \lesssim & 2^{i \gamma} \sup_{0 \leqslant s \leqslant T} \| \Psi_i
       (s) \|\\
       & \lesssim & \sup_{0 \leqslant s \leqslant T} \| (1
       -\mathcal{L}_{\theta})^{\gamma} \Psi_i (s) \|,
     \end{array} \]
  and, as a consequence,
  \[ \begin{array}{lll}
       \left\| (1 -\mathcal{L}_{\theta})^{1 + \gamma} \int_0^t S_{t - s} \Psi
       (s) \mathd s \right\|^2 & \lesssim & \sum_i \left\| (1
       -\mathcal{L}_{\theta})^{1 + \gamma} \int_0^t S_{t - s} \Psi_i (s)
       \mathd s \right\|^2\\
       & \lesssim & \sup_{0 \leqslant s \leqslant T} \sum_i \| (1
       -\mathcal{L}_{\theta})^{\gamma} \Psi_i (s) \|^2\\
       & \lesssim & \sup_{0 \leqslant s \leqslant T} \| (1
       -\mathcal{L}_{\theta})^{\gamma} \Psi (s) \|^2 .
     \end{array} \]
  Therefore, since $\mathcal{N}$ commutes with $\mathcal{L}_{\theta}$, we also
  have
  \begin{eqnarray*}
    \sup_{0 \leqslant t \leqslant T} \| (1 +\mathcal{N})^p (1
    -\mathcal{L}_{\theta})^{1 + \gamma} \psi (t) \| & \leqslant & \| (1
    +\mathcal{N})^p (1 -\mathcal{L}_{\theta})^{1 + \gamma} \psi (0) \|\\
    &  & + \sup_{0 \leqslant t \leqslant T} \| (1 +\mathcal{N})^p (1
    -\mathcal{L}_{\theta})^{\gamma} \Psi (s) \|,
  \end{eqnarray*}
  that is the claimed estimate.
\end{proof}

\section*{Acknowledgements}

It is a pleasure to contribute to the proceedings for the conference
celebrating the 60th birthday of Prof. Franco Flandoli. The first author of
this paper has learned a great deal from him, and not only about stochastic
fluid dynamics and stochastic analysis in general. The stimulating and
friendly atmosphere in Pisa was fundamental for his career. The research
exposed here is supported by DFG via CRC 1060.

\end{document}